\def\into{\hookrightarrow}
\def\toisom{\widetilde{\to}}
\def\.{,\dotsc ,}
\def\:{\colon}
\def\wt{\widetilde}
\def\ol{\overline}
\def\Val{{\rm Val}}
\def\Val{{\rm Val}}
\def\Red{{\rm Red}}
\def\Spa{{\rm Spa}}
\def\Spec{{\rm Spec}}
\def\Frac{{\rm Frac}}
\def\bfSpec{{\bf Spec}}
\def\red{{\rm red}}
\def\aff{{\rm aff}}
\def\rig{{\rm rig}}
\def\ad{{\rm ad}}
\def\an{{\rm an}}
\def\RZ{{\rm RZ}}
\def\Id{{\rm Id}}
\def\bir{{\rm bir}}
\def\bfP{{\bf P}}
\def\bfS{{\bf S}}
\def\bfT{{\bf T}}
\def\bfU{{\bf U}}
\def\bfV{{\bf V}}
\def\bfW{{\bf W}}
\def\bfX{{\bf X}}
\def\bfY{{\bf Y}}
\def\bfZ{{\bf Z}}
\def\bff{{\bf f}}
\def\bfg{{\bf g}}
\def\bfh{{\bf h}}
\def\bfi{{\bf i}}
\def\bfr{{\bf r}}
\def\bfu{{\bf u}}
\def\bfv{{\bf v}}
\def\bfw{{\bf w}}
\def\bfy{{\bf y}}
\def\gtU{{\mathfrak U}}
\def\gtW{{\mathfrak W}}
\def\gtX{{\mathfrak X}}
\def\gtf{{\mathfrak f}}
\def\gth{{\mathfrak h}}
\def\bbA{{\mathbb A}}
\def\bbG{{\mathbb G}}
\def\bbP{{\mathbb P}}
\def\calF{{\mathcal F}}
\def\calI{{\mathcal I}}
\def\calJ{{\mathcal J}}
\def\calO{{\mathcal O}}
\def\calP{{\mathcal P}}
\def\calS{{\mathcal S}}
\def\calT{{\mathcal T}}
\def\calU{{\mathcal U}}
\def\calV{{\mathcal V}}
\def\calW{{\mathcal W}}
\def\calX{{\mathcal X}}
\def\calY{{\mathcal Y}}
\def\calZ{{\mathcal Z}}
\def\oY{{\ol Y}}
\def\of{{\ol f}}
\def\og{{\ol g}}
\def\tilB{{\wt B}}
\def\tilK{{\wt K}}
\def\tilR{{\wt R}}
\def\tilT{{\wt T}}
\def\tilU{{\wt U}}
\def\tilW{{\wt W}}
\def\tilX{{\wt X}}
\def\tilZ{{\wt Z}}
\def\tila{{\wt a}}
\def\tilg{{\wt g}}
\def\tilh{{\wt h}}
\def\obfX{{\ol\bfX}}
\def\obfY{{\ol\bfY}}
\def\tilcalI{{\wt\calI}}
\def\tilcalJ{{\wt\calJ}}
\def\tilcalT{{\wt\calT}}
\def\tilcalX{{\wt\calX}}
\def\kcirc{k^\circ}
\def\oalpha{{\ol\alpha}}
\def\opi{{\ol\pi}}
\def\alp{{\alpha}}
\def\veps{\varepsilon}
\def\R+*{{\bf R^*_+}}
\def\colim{{\rm{colim}}}
\newtheorem{theor}[subsubsection]{Theorem}
\newtheorem{prop}[subsubsection]{Proposition}
\newtheorem{lem}[subsubsection]{Lemma}
\newtheorem{cor}[subsubsection]{Corollary}
\theoremstyle{definition}
\newtheorem{defin}[subsubsection]{Definition}
\newtheorem{rem}[subsubsection]{Remark}
\newtheorem{exam}[subsubsection]{Example}
\def\aff{\mathfrak{Aff}}
\def\colim{{\rm colim}}
\begin{document}

\author{Michael Temkin, Ilya Tyomkin}
\thanks{Both authors were supported by the Israel Science Foundation (grant No. 1018/11). The second author was also partially supported by the European FP7 IRG grant 248826.}
\title[On relative birational geometry and {N}agata's compactification]{On relative birational geometry and {N}agata's compactification for algebraic spaces}

\address{Einstein Institute of Mathematics, The Hebrew University of Jerusalem, Giv'at Ram, Jerusalem, 91904, Israel}
\email{temkin@math.huji.ac.il}
\address{Department of Mathematics, Ben-Gurion University of the Negev, P.O.Box 653, Be'er Sheva, 84105, Israel}
\email{tyomkin@math.bgu.ac.il}
\keywords{Riemann-Zariski spaces, algebraic spaces, Nagata compactification.}

\begin{abstract}
In \cite{temrz}, the first author introduced (relative) Riemann-Zariski spaces corresponding to a morphism of schemes and established their basic properties. In this paper we clarify that theory and extend it to morphisms between algebraic spaces. As an application, a new proof of Nagata's compactification theorem for algebraic spaces is obtained.
\end{abstract}

\maketitle

\section{Introduction}
In \cite{temrz}, the first author introduced relative Riemann-Zariski spaces associated to a separated morphism of schemes, established their basic properties, and obtained several applications such as a strong version of stable modification theorem for relative curves, and a theorem about factorization of separated morphisms, which generalizes Nagata's compactification theorem \cite{Nagata}. The aim of the current paper is to extend the methods and the results of \cite{temrz} to algebraic spaces, and thereby, prepare the ground for further generalizations, e.g., the equivariant case and the case of stacks.

In this paper, we introduce the language of {\it models}, which makes the method more intuitive. Its choice is motivated by relations between classical birational geometry, Raynaud's theory, and, what we call, relative birational geometry. The results in this paper include (i) a new proof of Nagata's compactification theorem for algebraic spaces and of a stronger result on factorization of separated morphisms of algebraic spaces, (ii) a valuative description of Riemann-Zariski spaces associated to a separated morphism of algebraic spaces, (iii) a valuative criterion for schematization, and (iv) an application of our theory to the study of Pr\"ufer morphisms and Pr\"ufer pairs of qcqs algebraic spaces.

\subsection{Relative birational geometry}\label{relbirsubsec}
Relative birational geometry is the central topic of the current paper. In this section, we explain the concept of relative birational geometry, the motivation for its study, and the relations with the classical birational and non-archimedean geometries.

\subsubsection{The absolute case}
Classical birational geometry studies algebraic varieties up-to birational equivalence, and, in particular, addresses questions about (i) birational invariants, and (ii) existence of ``good" representatives of birational classes. More precisely, birational geometry studies the localization procedure of the category of integral schemes with dominant morphisms with respect to proper birational morphisms, and the birational problems can be roughly divided into two classes: the study of the localized category, including (i), and the study of the fibers of the localization, including (ii), weak factorization problem, Chow's Lemma, etc.

\subsubsection{Birational geometry of pairs}
Often, one is only interested in modifications of a scheme that preserve a certain (pro-)open subscheme. For example, in the desingularization problems, one usually wants to preserve the regular locus of $X$. In this case, it is more natural to consider the category of pairs $(X,U)$, where $U\subseteq X$ is pro-open, quasi-compact, and schematically dense; and to study the localization functor that inverts {\em $U$-modifications}, i.e., morphisms $(X',U')\to(X,U)$ such that $X'\to X$ is proper and $U'\toisom U$. We call the latter study {\em birational geometry of pairs}, and the classical birational geometry is obtained when $U$ is a point.

\subsubsection{The general relative case}
It makes sense to study birational geometry of morphisms $U\to X$ that are not necessarily pro-open immersions. For this aim, we introduce the language of models (see \S\ref{analogysec} for some analogies explaining the terminology). A {\em model} is a triple $\bfX=(\calX, X, \psi_\bfX)$ such that $X$ and $\calX$ are quasi-compact and quasi-separated (qcqs) algebraic spaces, and $\psi_\bfX\colon\calX\to X$ is a separated schematically dominant morphism. If $\psi_\bfX$ is an open immersion (resp. affine morphism) then we say that $\bfX$ is {\em good} (resp. affine). In particular, a good model can be viewed as a pair $(X,\calX)$. A {\em morphism of models} $\bff\colon\bfY\to \bfX$ is a pair of morphisms $\gtf\colon \calY\to \calX$ and $f\colon Y\to X$ such that $\psi_\bfX\circ \gtf=f\circ \psi_\bfY$. If $\gtf$ is an isomorphism and $f$ is proper then we say that $\bff$ is a {\em modification}. The {\em relative birational geometry} studies localization of the category of models with respect to the family of modifications.

\subsubsection{Factorization and Nagata's compactification theorems}\label{factorsubsec}
Nagata's compactification theorem is a classical result first proved by Nagata \cite{Nagata} in 1963. Since then few other proofs were found, see \cite{con, Lut, temrz}. Recently, it has been generalized to the case of algebraic spaces by Conrad, Lieblich, and Olsson following some ideas of Gabber \cite{CLO}. Note also that in a work in progress \cite{Rydh}, Rydh extends the compactification theorem to tame morphisms between Deligne-Mumford stacks.

In our study of models up to modifications, one of the main results is Theorem~\ref{thm:modmainth} asserting, in particular, that any model $\bfX$ possesses a modification $\bfY$ such that $\psi_\bfY$ is affine. This is equivalent to the {\em Factorization theorem} asserting that $\psi_\bfX\:\calX\to X$ is a composition of an affine morphism $\psi_\bfY\:\calX=\calY\to Y$ and a proper morphism $Y\to X$. Since any affine morphism of finite type is quasi-projective, if $\bfX$ is of finite type then $\bfY$ above can be chosen to be good. In particular, $Y$ is an $X$-compactification of $\calX=\calY$, and we obtain a new proof of Nagata's compactification theorem for algebraic spaces. This proves {\em a posteriori} that the birational geometry of pairs contains the birational geometry of finite type models - a deep fact equivalent to Nagata's compactification.

\subsubsection{Formal birational geometry}
Our discussion of birational geometry would not be complete without the formal case. Let $k$ be a complete height-one valued field with ring of integers $\kcirc$. To any {\em admissible} (i.e., finitely presented and flat) formal $\kcirc$-scheme $\gtX$ one can associate a non-archimedean analytic space $\gtX_\eta$ over $k$ called the generic fiber. There are three theories of such analytic spaces: Tate's rigid spaces, Berkovich's $k$-analytic spaces, and Huber's adic spaces, and $\gtX_\eta$ can be defined in each of them. Any {\em admissible blow up}, i.e., a formal blow up along an open ideal, induces an isomorphism of generic fibers. Raynaud proved that the category of qcqs rigid spaces over $k$ is the localization of the category of admissible formal $\kcirc$-schemes with respect to the family of all admissible blow ups (see \cite{BL}). A part of this result is the following version of Chow's lemma: admissible blow ups of $\gtX$ are cofinal among all {\em admissible modifications}, i.e., proper morphisms $\gtX'\to\gtX$ inducing an isomorphism of the generic fibers.  In particular, Raynaud's theory provides a way to treat analytic geometry over $k$ as the birational geometry of admissible formal $\kcirc$-scheme. This approach had major contributions to the non-archimedean geometry.

\subsubsection{Analogies}\label{analogysec}
In classical birational geometry, a (proper) $k$-model of a finitely generated $k$-field $K$ is a representative of the birational equivalence class of $K$. Similarly, in formal geometry one says that $\gtX$ is a {\em formal model} of the non-archimedean space $\gtX_\eta$. In the same way, one can view a model $\bfX$ as a representative of the relative birational equivalence class defined by $\bfX$. The analogy with the formal case is especially fruitful, and some of our arguments are motivated by their analogues in Raynaud's theory (see also \cite{temrz}). Note, however, that the analogy becomes tight only when addressing good models.

\subsection{Birational spaces}\label{birspsec}
In order to study the fibers of the localization functor, it is useful to have nice invariants associated to the fibers. In this paper we consider several {\em relative birational spaces} - topological spaces associated to a model in a functorial way that depend on the model only up to modifications. In the absolute case the construction of such spaces goes back to Zariski.

\subsubsection{The classical case}
For a finitely generated field extension $K/k$ Zariski defined the {\em Riemann manifold} $\RZ(K/k)$ to be the set of all valuation rings $k\subseteq \calO\subseteq K$ such that $\Frac(\calO)=K$, see \cite[A.II.5]{Zar}. Zariski equipped $\RZ(K/k)$ with a natural quasi-compact topology, and proved that it is homeomorphic to the filtered limit of all projective $k$-models of $K$.

\subsubsection{The formal case}
A well known fact that appeared first in a letter of Deligne says that if $\gtX$ is an admissible formal scheme then the filtered limit of all its admissible modifications (or blow ups) is homeomorphic to the adic generic fiber $\gtX_\eta^\ad$ (and other generic fibers $\gtX_\eta^\an$ and $\gtX_\eta^\rig$ are its natural subsets). In particular, any finite type adic space over $k$ is homeomorphic to the filtered limit of all its formal models.

\subsubsection{The absolute Riemann-Zariski spaces}
Zariski's construction easily extends from projective varieties to arbitrary integral schemes, see \cite[\S3.2]{temst}. The {\em Riemann-Zariski space} of such a scheme $X$ is the filtered limit of the family of all modifications of $X$ in the category of locally ringed topological spaces. In particular, $\RZ(X)$ is a birational space, i.e., it only depends on $X$ up to modifications. Such spaces play an important role in desingularization problems, in some proofs of Nagata's compactification theorem (including Nagata's original proof), and in other problems of birational geometry.

Similarly to Zariski's definition, there exists a valuative description of points of $\RZ(X)$. Let $\Val(X)$ be the set of pairs consisting of a valuation ring $\calO$ of $k(X)$ and a morphism $\Spec(\calO)\to X$ extending the isomorphism of the generic points. It admits a natural structure of a locally ringed space. The valuative criterion of properness gives rise to a natural map $\red_X\:\Val(X)\to\RZ(X)$, which turns out to be an isomorphism.

\subsubsection{The relative Riemann-Zariski spaces}
The above absolute construction was extended in \cite[\S3.3]{temst} and \cite{temrz} to morphisms of schemes, i.e., to models in the category of schemes, and in this paper we generalize it further to models of algebraic spaces. We define $\RZ(\bfX)$ to be the filtered limit of the topological spaces $|X_\alp|$ over the family of all modifications $\bfX_\alp\to\bfX$. Unlike the scheme case in \cite{temrz}, we do not equip $\RZ(\bfX)$ with any further structure, see Remark~\ref{rem:RZextrastucture}. Obviously, $\RZ(\bfX)$ is a relative birational space, as it depends on $\bfX$ only up to a modification.

The situation with valuative interpretation is subtler. We introduced in \cite[\S5]{pruf} certain valuative diagrams in the category of algebraic spaces that generalize Zariski's valuations $\Spec(\calO)\to X$, and used them to refine the usual valuative criterion of properness for algebraic spaces. Such diagrams are called {\em semivaluations} in this paper (see \S\ref{semivalsec}), and the set of all semivaluations of $\bfX$ is denoted by $\Spa(\bfX)$. It admits two natural topologies, called {\em Zariski topology} and {\em constructible topology}. The space $\Spa(\bfX)$ is easy to work with, but, unfortunately, it is much bigger than $\RZ(\bfX)$, and hence only plays a technical role. However, there exists a class of ``nice" semivaluations, called {\em adic} (see \S\ref{adicvalsec}), and we denote the subspace of adic semivaluations by $\Val(\bfX)$. Although, the space $\Val(\bfX)$ is more difficult to approach, it turns out to be the ``right" space to consider.

It follows from the valuative criteria of \cite{pruf} that both $\Spa(\bfX)$ and $\Val(\bfX)$ depend on $\bfX$ only up to modifications. The spaces $\Val(\bfX)$ play central role in our study of birational geometry of models. In particular, we will often work locally around points of $\Val(\bfX)$. As in the classical case, there exist natural restriction maps $\red_\bfX\:\Val(\bfX)\to\RZ(\bfX)$ and $\Red_\bfX\:\Spa(\bfX)\to\RZ(\bfX)$, and we prove in Theorem~\ref{rzth} that $\red_\bfX$ is a homeomorphism.

\subsubsection{Categorifications}\label{categsec}
As a future project, it would be desirable to provide relative birational spaces with enough structure in order to obtain a geometric realization of birational geometry. This should lead to an independent definition of the category of birational spaces, equivalent to the category of models localized by the modifications. For comparison we note that in the formal case, this program is fully realized by (one of) the definitions of non-archimedean spaces and Raynaud's theory, in the classical case, this is done by the definition of the category $\bir_k$ in \cite[\S1]{temlocal} and \cite[Prop. 1.4]{temlocal}, and a geometric realization of birational geometry of schemes is constructed by U. Brezner in \cite{Br}.

\subsection{Overview of the paper}

\subsubsection{Models}
In Section~\ref{modelsec}, we introduce the category of models of algebraic spaces, and study basic properties of models. In particular, we define pseudo-modifications, quasi-modifications, modifications, and blow ups of models; and prove that they are preserved by compositions and base changes, and form filtered families. Here we only note that $\bff\:\bfY\to\bfX$ is a pseudo-modification if $\gtf$ is an open immersion and $f$ is separated and of finite type, and it is a quasi-modification if it is also {\em adic}, i.e., the immersion $\calY\into\calX\times_XY$ is closed. Quasi-modifications provide the right tool to localize modifications, and our main task will be to construct large enough families of pseudo-modifications, quasi-modifications, and modifications.

\subsubsection{Relative birational spaces}
In Section~\ref{relbirsec}, we associate to a model $\bfX$ the birational spaces $\RZ(\bfX)$, $\Spa(\bfX)$, and $\Val(\bfX)$. Then we show that these spaces are quasi-compact, $T_0$, and construct natural maps whose continuity, excluding $r_\bfX$, is easily established:
$$
\xymatrix{
\Spa(\bfX)\ar@<1ex>[d]^{r_\bfX}\ar[dr]^{\Red_\bfX} & \\
\Val(\bfX)\ar@{^{(}->}@<1ex>[u]^{\iota_\bfX}\ar[r]^{\red_\bfX} & \RZ(\bfX)\\
}
$$

Note that the topology of $\RZ(\bfX)$ is generated by the subsets $\RZ(\bfY)$ for compactifiable quasi-modifications $\bfY\to\bfX$, i.e., $\bfY$ is an open submodel of a modification of $\bfX$, while the topology of $\Spa(\bfX)$ is generated by the subsets $\Spa(\bfY)$ for pseudo-modifications $\bfY\to\bfX$. In particular, the proof of the $T_0$ property in Theorem~\ref{thm:ValT0} reduces to a construction of a pseudo-modification that separates semivaluations. The construction involves two pushouts (affine and Ferrand) and an approximation argument. The reason that we have to use the affine pushout is quite technical: in general there is no localization of an algebraic space at a point.

At this stage we provide $\Val(\bfX)$ with the topology induced from $\Spa(\bfX)$. In fact, the natural topology on $\Val(\bfX)$ is generated by quasi-modifications because $\Val(\bfY)\subseteq\Val(\bfX)$ only when $\bfY$ is a quasi-modification of $\bfX$ (Lemma~\ref{lem1}), but this topology is useless at this stage as we do not even know yet that there are enough quasi-modifications to distinguish points of $\Val(\bfX)$. To begin with, we only need to use quasi-compactness of $\Val(\bfX)$, so it suffices to work with the (a priori) stronger pseudo-modification topology.

\subsubsection{The family of modifications}
Theorem~\ref{thm:modmainth} is our main result about the family of modifications of a model. It asserts that affine models are cofinal among all modifications of $\bfX$, and if $\bfX$ is good, then the family of blow ups of $\bfX$ is cofinal among all its modifications. As was noted in \S\ref{factorsubsec}, this implies Factorization and Nagata's compactification theorems. Our proof of Theorem~\ref{thm:modmainth} consists of the following steps: First, we attack the problem locally, and prove in Theorem~\ref{strictqmodth} that if $\psi_\bfX$ is of finite type then any adic semivaluation $\bff\:\bfT\to\bfX$ factors through a good quasi-modification $\bfY\to\bfX$. The construction of $\bfY$ is the most technical argument in this paper. It is analogous to the construction of a pseudo-modification in Theorem~\ref{thm:ValT0} but more involved because we have to work with $\bfX$-adic models throughout the construction. In addition, it only applies to uniformizable semivaluations, thus forcing us to assume that $\bfX$ is of finite type. Then we prove in Lemma~\ref{lem:glqmod} that good quasi-modifications $\bfY_i\to \bfX$ can be glued to a modification of $\bfX$ after blowing them up. These two results together with the quasi-compactness of $\Val(\bfX)$ imply Nagata's compactification theorem for algebraic spaces, see Theorem~\ref{th:decompose1fintype}. Finally, we apply the approximation theory of Rydh \cite{rydapr} to remove unnecessary finite type assumptions, and complete the proof of Theorem~\ref{thm:modmainth}.

\subsubsection{Main results on Riemann-Zariski spaces}
The quasi-modifications constru\-cted in Theorem~\ref{strictqmodth} are {\em strict}, i.e., satisfy $\calY=\calX$. In Section~\ref{rzsec} we study the whole family of quasi-modifications along an adic semivaluation, and prove that there exist ``enough" (not necessarily strict) quasi-modifications, see Theorem~\ref{thm:qmodth} for precise formulation. As an easy corollary, we complete our investigation of the topological spaces $\RZ(\bfX)$, $\Val(\bfX)$, and $\Spa(\bfX)$. Namely, we show in Theorem~\ref{topvalth} that any open quasi-compact subset in $\Val(\bfX)$ is the image of $\Val(\bfW)$ for a quasi-modification $\bfW\to\bfX$, and conclude that the map $\red_\bfX$ is a homeomorphism (Theorem~\ref{rzth}), and $r_\bfX$ is a topological quotient (Corollary~\ref{rzcor}). In particular, all three topologies on $\Val(\bfX)$ coincide: the topologies generated by (1) pseudo-modifications, (2) quasi-modifications, and  (3) compactifiable quasi-modifications.

We conclude the paper with two other applications of our theory. Theorem~\ref{schemeth} establishes a valuative criterion for schematization of algebraic spaces, and Theorem~\ref{prufmorth} describes {\em Pr\"ufer morphisms} of algebraic spaces, i.e., morphisms that admit no non-trivial modifications.


\section{Models}\label{modelsec}

\subsection{Definition and basic properties}

\subsubsection{The category of models}
A {\em model} $\bfX$ is a triple $(\calX, X, \psi_\bfX)$ such that $X$ and $\calX$ are quasi-compact and quasi-separated (qcqs) algebraic spaces, and $\psi_\bfX\colon\calX\to X$ is a separated schematically dominant morphism. The spaces $X$ and $\calX$ are called the {\em model space} and the {\em generic space} of $\bfX$ respectively. We always denote the model, the model space, and the generic space by the same letter, but for model we use bold font, for the generic space calligraphic font, and for the model space the regular font. If both $X$ and $\calX$ are schemes then the model $\bfX$ is called {\em schematic}.

A {\em morphism of models} $\bff\colon\bfY\to \bfX$ is a pair of morphisms $\gtf\colon \calY\to \calX$ and $f\colon Y\to X$ such that $\psi_\bfX\circ \gtf=f\circ \psi_\bfY$. The maps $\gtf$ and $f$ are called the {\em generic} and the {\em model components} of $\bff$. Once again, we use the same letter but different fonts to denote the morphisms of models and their components. We say that a morphism $\bff\colon\bfY\to \bfX$ is {\em separated} if $f\colon Y\to X$ is separated. In this case,
$\psi_\bfX\circ \gtf=f\circ \psi_\bfY$ is separated, and hence $\gtf\:\calY\to\calX$ is also separated.

\subsubsection{Good models}\label{goodsec}
As we have mentioned in the introduction, the class of models $\bfX$ for which $\psi_\bfX$ is an open immersion plays an important role in the sequel. We call such models {\em good}.

\begin{rem}\label{goodrem}
(i) Any good model $\bfX$ can be equivalently described by its model space $X$ and the closed subset $Z=|X|\setminus|\calX|$. Since $\calX$ is quasi-compact, $Z=|\bfSpec(\calO_X/\calI)|$ for a finitely generated ideal that we call an {\em ideal of definition} of $\bfX$. Any other ideal of definition $\calJ$ satisfies $\calI^m\subseteq\calJ^n\subseteq\calI$ for $m\gg n\gg 0$.

(ii) We noted in the introduction that there is a large similarity between good models $\calX\into X$ and formal models $\gtX$ of non-archimedean spaces $\gtX_\eta$. In particular, $X$ corresponds to $\gtX$, $\calX$ corresponds to the generic fiber of $\gtX$, $Z$ corresponds to the closed fiber $\gtX_s$, and ideals of definition of $\bfX$ correspond to ideals of definition of $\gtX$.
\end{rem}

\subsubsection{Basic properties of models}
The proof of the following result is almost obvious, so we omit it.

\begin{prop}\label{prop:basicpropofmodels}
(1) The functor $X\mapsto (X,X,id_X)$ is a full embedding of the category of qcqs algebraic spaces into the category of models. By abuse of notation we shall not distinguish between $X$ and $(X,X,id_X)$.

(2) Any model $\bfX$ admits two canonical morphisms $\calX\to \bfX\to X$.

(3) The category of models admits fiber products. Furthermore, if $\bfY\to\bfX$ and $\bfW\to\bfX$ are morphisms of models then the generic space of $\bfY\times_\bfX\bfW$ is $\calY\times_\calX\calW$, the model space of $\bfY\times_\bfX\bfW$ is the schematic image of $\calY\times_\calX\calW\to Y\times_XW$, and $\psi_{\bfY\times_\bfX\bfW}$ is the natural map.
\end{prop}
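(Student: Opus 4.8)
The plan is to verify each assertion in turn, with the bulk of the work concentrated in part (3); parts (1) and (2) are essentially formal.

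For part (1), I would check the three defining properties of a model for the triple $(X,X,\id_X)$: the identity $\id_X\colon X\to X$ is trivially separated, and it is schematically dominant since it is an isomorphism. To see that $X\mapsto(X,X,\id_X)$ is a full embedding, I would observe that a morphism of models from $(Y,Y,\id_Y)$ to $(X,X,\id_X)$ is a pair $(\gtf,f)$ with $\gtf\colon Y\to X$, $f\colon Y\to X$ satisfying $\id_X\circ\gtf=f\circ\id_Y$, which forces $\gtf=f$; hence morphisms of models are in bijection with morphisms $Y\to X$ of algebraic spaces, giving fully faithfulness. For part (2), the morphism $\calX\to\bfX$ is the morphism of models whose generic component is $\id_\calX$ and whose model component is $\psi_\bfX$ (the compatibility $\psi_\bfX\circ\id_\calX=\psi_\bfX\circ\id_\calX$ is automatic, reading $\calX$ as the model $(\calX,\calX,\id_\calX)$), and $\bfX\to X$ has generic component $\psi_\bfX$ and model component $\id_X$; the required square commutes tautologically.

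The substance is part (3). Given $\bfY\to\bfX$ and $\bfW\to\bfX$, I would construct the candidate $\bfZ$ by setting its generic space to be $\calZ:=\calY\times_\calX\calW$, letting $Z$ be the schematic image of the composite $\calZ\to Y\times_X W$, and taking $\psi_\bfZ\colon\calZ\to Z$ to be the induced map. First I must check that $\bfZ$ is a genuine model, i.e.\ that $\calZ$ and $Z$ are qcqs and that $\psi_\bfZ$ is separated and schematically dominant. Quasi-compactness and quasi-separatedness of $\calZ$ and of $Y\times_X W$ follow from stability of these properties under fiber products of qcqs algebraic spaces, and $Z$ inherits them as a closed subspace of $Y\times_X W$. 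That $\psi_\bfZ$ is schematically dominant is immediate from the definition of schematic image. For separatedness of $\psi_\bfZ$, I would factor $\calZ\to Z\hookrightarrow Y\times_X W$; the composite $\calZ=\calY\times_\calX\calW\to Y\times_X W$ is separated because it is the fiber product over the separated-diagonal structure of the separated maps $\psi_\bfY,\psi_\bfW$, and a map becomes separated once its composite with a (separated) monomorphism is, so $\psi_\bfZ$ is separated.

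The main obstacle, and the step deserving the most care, is verifying the universal property: that $\bfZ$ so defined really is the categorical fiber product. Given a model $\bfT$ with compatible morphisms $\bfT\to\bfY$ and $\bfT\to\bfW$ over $\bfX$, the generic components induce a unique $\calT\to\calY\times_\calX\calW=\calZ$ by the universal property of fiber products of algebraic spaces, and the model components induce a unique $T\to Y\times_X W$. The delicate point is that this latter map must factor through the closed subspace $Z$; here I would use that $\psi_\bfT$ is schematically dominant together with the commutativity $\psi_{Y\times_X W}\circ(\text{generic component})=(\text{model component})\circ\psi_\bfT$, so the image of $T\to Y\times_X W$ is forced into the schematic image $Z$ of $\calZ$ precisely because $\calT\to\calZ\to Z$ already lands in $Z$ and $\calT$ is schematically dense in $T$. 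Once factorization through $Z$ is established, uniqueness of the factorization follows from schematic density of $\calT$ in $T$ (a map out of $T$ agreeing with a given one on a schematically dense subspace, into a separated target, is determined), completing the universal property and hence the proof.
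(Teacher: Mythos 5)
Your proposal is correct. The paper in fact omits this proof entirely (``The proof of the following result is almost obvious, so we omit it''), and your writeup supplies exactly the expected verification: the only two points of substance --- separatedness of $\psi_{\bfY\times_\bfX\bfW}$, obtained by factoring $\calY\times_\calX\calW\to Y\times_XW$ through the closed immersion into $\calY\times_X\calW$ followed by the product of the separated maps $\psi_\bfY\times_X\psi_\bfW$, and the factorization of $T\to Y\times_XW$ through the schematic image $Z$ using that $\psi_\bfT$ is quasi-compact and schematically dominant --- are both handled correctly.
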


\subsubsection{Classes of models}
Our next aim is to introduce basic classes of models and their morphisms that will be used in the sequel.

\begin{defin}
(1) Let $(\star)$ be one of the following properties of morphisms: affine, of finite type, of finite presentation. We say that a model $\bfX$ satisfies $(\star)$ if and only if $\psi_\bfX$ satisfies $(\star)$.

(2) A model $\bfX$ is called {\em affinoid} if both $X$ and $\calX$ are affine.

(3) The {\em diagonal} of a morphism $\bff \colon\bfY\to\bfX$ is the natural morphism
$$\Delta_\bff\:\calY\to \calX\times_XY.$$
\end{defin}

\subsubsection{Adic morphisms}
A morphism $\bff\:\bfY\to\bfX$ is called {\em adic} (resp. {\em semi-cartesian}) if its diagonal is proper (resp. a clopen immersion).

\begin{rem}
To illustrate importance of adic morphisms and to explain the terminology, let us consider the case of a morphism $\bff\:\bfY\to\bfX$ of good models. Set $T=|Y|\setminus|\calY|$ and $Z=|X|\setminus|\calX|$. Then it is easy to see that $\bff$ is adic if and only if $f^{-1}(Z)=T$, or, equivalently, the pullback of an ideal of definition is an ideal of definition. Using the analogy with formal schemes from Remark~\ref{goodrem} we see that adic morphisms of good models correspond to adic morphisms of formal schemes.
\end{rem}

\begin{lem}\label{propcomplem}
(1) Let $\bff\:\bfY\to\bfX$ be a morphism of models. If $\gtf$ is proper and $f$ is separated then $\bff$ is adic.

(2) Composition of adic morphisms is adic.

(3) Adic morphisms are preserved by base changes.

(4) Let $\bff\:\bfY\to\bfX$ and $\bfg\:\bfZ\to\bfY$ be morphisms of models. If $\bff\circ\bfg$ is adic and $\gtf$ is separated then $\bfg$ is adic.
\end{lem}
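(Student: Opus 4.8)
The plan is to reduce all four statements to two standard cancellation properties for morphisms of algebraic spaces: (a) if $v\circ u$ is proper and $v$ is separated, then $u$ is proper; and (b) if $v\circ u$ is separated, then $u$ is separated. The common thread is the identity $\pr_\calX\circ\Delta_\bff=\gtf$, which holds by the very definition of the diagonal, together with the fact that the first projection $\pr_\calX\colon\calX\times_XY\to\calX$ is the base change of $f$ along $\psi_\bfX$ and is therefore separated whenever $f$ is. Statement (1) is then immediate: $\gtf=\pr_\calX\circ\Delta_\bff$ is proper and $\pr_\calX$ is separated, so (a) shows that $\Delta_\bff$ is proper, i.e. $\bff$ is adic.

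For (2) and (4) I would first establish a single factorization of the diagonal of a composite. Writing $\bfg\colon\bfZ\to\bfY$ and $\bff\colon\bfY\to\bfX$, the morphism $\Delta_\bff\colon\calY\to\calX\times_XY$ is a $Y$-morphism (as $\pr_Y\circ\Delta_\bff=\psi_\bfY$), so it may be base changed along $g\colon Z\to Y$ to a morphism $\alpha\colon\calY\times_YZ\to\calX\times_XZ$, using the canonical identification $(\calX\times_XY)\times_YZ\cong\calX\times_XZ$. A direct check on universal properties then gives
\[
\Delta_{\bff\circ\bfg}\colon\calZ\xrightarrow{\Delta_\bfg}\calY\times_YZ\xrightarrow{\alpha}\calX\times_XZ .
\]
Now (2) follows at once: if $\bff$ and $\bfg$ are adic, then $\Delta_\bff$ and $\Delta_\bfg$ are proper, $\alpha$ is proper by base change, and the composite $\Delta_{\bff\circ\bfg}$ is proper. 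For (4), separatedness of $\gtf=\pr_\calX\circ\Delta_\bff$ forces $\Delta_\bff$ separated by (b), hence $\alpha$ is separated by base change; then $\alpha\circ\Delta_\bfg=\Delta_{\bff\circ\bfg}$ is proper and $\alpha$ is separated, so (a) yields $\Delta_\bfg$ proper, i.e. $\bfg$ is adic.

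For (3), let $\bfW\to\bfX$ have components $\gte\colon\calW\to\calX$ and $e\colon W\to X$, set $\bfV=\bfY\times_\bfX\bfW$, and let $\bfh\colon\bfV\to\bfW$ be the projection. By Proposition~\ref{prop:basicpropofmodels}(3) we have $\calV=\calY\times_\calX\calW$, while $V$ is the schematic image of $\calV\to Y\times_XW$; consequently $\calW\times_WV$ embeds as a closed subspace of $\calW\times_W(Y\times_XW)$. The strategy is to show that the composite of $\Delta_\bfh\colon\calV\to\calW\times_WV$ with this closed immersion coincides, after the canonical identification $\calW\times_W(Y\times_XW)\cong(\calX\times_XY)\times_\calX\calW$, with the base change $\Delta_\bff\times_\calX\calW$ of $\Delta_\bff$ along $\gte$. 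The latter is proper, being a base change of the proper morphism $\Delta_\bff$; since the closed immersion is separated, (a) gives that $\Delta_\bfh$ is proper, i.e. $\bfh$ is adic.

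The conceptual content is entirely contained in the cancellation lemmas; the only real work is the bookkeeping of iterated fibre products needed to justify the two factorizations, and this is where I expect the friction. The identity $\Delta_{\bff\circ\bfg}=\alpha\circ\Delta_\bfg$ and its analogue in (3) are formal consequences of universal properties, but one must track carefully which structure maps are used---in particular, that the compatibility conditions $\psi_\bfX\circ\gtf=f\circ\psi_\bfY$ and $\psi_\bfX\circ\gte=e\circ\psi_\bfW$ are precisely what make $\alpha$ and the base change well defined. The extra subtlety in (3) is that one works with the schematic image $V\hookrightarrow Y\times_XW$ rather than with $Y\times_XW$ itself; fortunately this enters only through the separated (indeed closed) immersion $\calW\times_WV\hookrightarrow\calW\times_W(Y\times_XW)$, which is exactly what cancellation (a) requires.
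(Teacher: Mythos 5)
Your proof is correct and follows essentially the same route as the paper: parts (1), (2), and (4) use the identical factorizations $\gtf=\pr_\calX\circ\Delta_\bff$ and $\Delta_{\bff\circ\bfg}=\alpha\circ\Delta_\bfg$ together with the standard cancellation properties (the only cosmetic difference being that in (4) the paper gets separatedness of $\alpha$ by factoring it through base changes of $\gtf$ and the diagonal of $Y/X$, whereas you deduce separatedness of $\Delta_\bff$ by cancellation first). Your argument for (3), which the paper leaves to the reader, is also correct, and you rightly identify the one genuine subtlety there: the model space of $\bfY\times_\bfX\bfW$ is the schematic image in $Y\times_XW$, which enters only through a closed immersion and is therefore harmless for the cancellation step.
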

\begin{proof}
(1) The base change $g\:\calX\times_X Y\to\calX$ of $f$ is separated and the composition $\gtf=g\circ\Delta_\bff$ is proper, hence $\Delta_\bff$ is proper.

(2) Let $\bff\:\bfY\to\bfX$ and $\bfg\:\bfZ\to\bfY$ be two adic morphisms. Then $\Delta_\bff$ and $\Delta_\bfg$ are proper. Hence the base change $h\:\calY\times_YZ\to \calX\times_XZ$ of $\Delta_\bff$ is proper, and the composition $\Delta_{\bfg\circ\bff}=h\circ\Delta_{\bfg}$ is proper too. Thus, $\bfg\circ\bff$ is adic.

(4) The composition $\calZ\to Z\times_Y\calY\to Z\times_X\calX$ is proper, and $Z\times_Y\calY\to Z\times_X\calX$ is separated since so are $\gtf$ and the diagonal $Y\to Y\times_XY$. Thus, $\calZ\to Z\times_Y\calY$ is proper.
We leave the proof of (3) to the reader.
\end{proof}

\subsubsection{Immersions}
An adic morphism $\bff\:\bfY\to\bfX$ is called an {\em immersion} (resp. {\em open immersion}, resp. {\em closed immersion}) if both its components are so. In particular, by an {\em open submodel} of $\bfX$ we mean a model $\bfU$ together with an open immersion $\bfi\:\bfU\to\bfX$. If $\{\bfU_\alp\subseteq \bfX\}_{\alpha\in A}$, are open submodels then the {\em union} $\cup_{\alp\in A}\bfU_\alp$ and the {\em intersection} $\cap_{\alp\in A}\bfU_\alp$ are defined componentwise. We say that $\{\bfU_\alp\}_{\alpha\in A}$ is an {\em open covering} of $\bfX$ if $\cup_{\alp\in A}\bfU_\alp=\bfX$.

\begin{rem}\label{rem:OpenImmIsCart}
It is easy to see that any open immersion $\bff\:\bfU\to \bfX$ is semi-cartesian: Indeed, the morphism $\calX\times_XU\to\calX$ and the composition $\calU\to \calX\times_XU\to\calX$ are open immersions, hence so is the diagonal $\delta_\bff\:\calU\to \calX\times_XU$. Since $\bff$ is adic, $\delta_\bff$ is proper and hence $\bff$ is semi-cartesian. However, a closed immersion need not be semi-cartesian, for example, take $\bfY=\Spec(k)$, $\bfX=(\bbP^1_k\to\Spec(k))$, $f=\Id_{\Spec(k)}$, and $\gtf$ is the closed immersion of the origin of $\bbP^1_k$.
\end{rem}

\subsection{Modifications of models}
\begin{defin}\label{moddef}
Let $\bff\:\bfY\to\bfX$ be a morphism of models.

(1) $\bff$ is called a {\em pseudo-modification} if $\gtf$ is an open immersion and $f$ is a separated morphism of finite type. A pseudo-modification is called {\em strict} if $\gtf$ is an isomorphism.

(2) $\bff$ is called a {\em quasi-modification} if it is an adic pseudo-modification.

(3) $\bff$ is called a {\em modification} if it is a strict pseudo-modification and $f$ is proper.
\end{defin}

By abuse of language, we will often refer to $\bfY$ as a {\em modification} or {\em quasi-modification} or {\em pseudo-modification} of $\bfX$. In particular, we will say that a modification $\bfY\to \bfX$ is {\em good (resp. affine good)} if $\bfY$ is so. The following easy observation will be useful:

\begin{prop}\label{prop:strpmISqm}
Any strict pseudo-modification is a quasi-modification. In particular, any modification is a quasi-modification.
\end{prop}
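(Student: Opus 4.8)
The plan is to reduce everything to Lemma~\ref{propcomplem}(1), which is the statement that already does the essential work. Recall that a strict pseudo-modification $\bff\:\bfY\to\bfX$ is, by Definition~\ref{moddef}(1), a pseudo-modification whose generic component $\gtf\:\calY\to\calX$ is an isomorphism. In particular, $\gtf$ is proper (every isomorphism is proper), and the model component $f\:Y\to X$ is separated (indeed, it is separated and of finite type, being part of the very definition of a pseudo-modification). These are exactly the hypotheses of Lemma~\ref{propcomplem}(1), so that lemma immediately yields that $\bff$ is adic, i.e., the diagonal $\Delta_\bff\:\calY\to\calX\times_XY$ is proper.

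Thus $\bff$ is an adic pseudo-modification, which by Definition~\ref{moddef}(2) is precisely a quasi-modification. This proves the first assertion. For the second assertion, observe that a modification is by Definition~\ref{moddef}(3) a strict pseudo-modification (with the extra property that $f$ is proper); applying the first part of the statement to it shows that it is a quasi-modification.

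I expect no genuine obstacle here: the statement is a formal consequence of Lemma~\ref{propcomplem}(1), and the only point requiring a moment's care is to notice that an isomorphism is in particular proper, so that the hypothesis ``$\gtf$ proper'' of Lemma~\ref{propcomplem}(1) is met in the strict case. Everything else is unwinding the definitions.
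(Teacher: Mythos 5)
Your proof is correct and is precisely the argument the paper has in mind (the paper omits the proof as an ``easy observation''): an isomorphism is proper, so Lemma~\ref{propcomplem}(1) applies and gives adicity, and a modification is by definition a strict pseudo-modification. Nothing further is needed.
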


\begin{exam}\label{modexam}
(i) If $\bfY\to\bfX$ is a modification then any open submodel $\bfU\subseteq\bfY$ is a quasi-modification of $\bfX$. In Theorem~\ref{thm:modmainth} (a strong version of Nagata's compactification), we will prove that a partial converse is true. Namely, for any quasi-modification $\bfU\to\bfX$ there exist modifications $\bfX'\to\bfX$ and $\bfU'\to\bfU$ such that $\bfU'\subseteq\bfX'$ is an open submodel, thereby justifying the terminology.

(ii) A simple example of a pseudo-modification, which is not a quasi-modification is the following: $\bfX=\bbA^1$, $\bfU=(\bbG_m\into\bbA^1)$, and $\bfU\to\bfX$ is the natural map.
\end{exam}

\begin{rem}
Although, the notion of quasi-modification seems to be new, it is tightly related to ideas used in various proofs of Nagata's compactification theorem for schemes. Assume that $\bfX$ is a schematic model.

(i) If $\bfX$ is a good model then quasi-modifications of $\bfX$ are closely related to {\em quasi-dominations} that were introduced by Nagata and played an important role in the proofs of Nagata's compactification theorem by Nagata, Deligne, and Conrad. See, for example \cite[\S2]{con}.

(ii) Modifications appeared in \cite{temrz} under the name of $\calX$-modifica\-tions. Namely, $\bff\:\bfY\to\bfX$ is a modification if and only if $\gtf$ is an isomorphism and $Y$ is an $\calX$-modification of $X$. Quasi-modifications were not introduced terminologically in \cite{temrz}, but they were used in the paper (e.g., see \cite[Prop.~3.3.1]{temrz}).
\end{rem}

Next, we establish basic properties of pseudo-modifications.

\begin{lem}
Let $\bff\:\bfY\to\bfX$ be a morphism of models. If the generic component $\gtf$ is an immersion then so is the diagonal $\Delta_\bff$. In particular, if $\bff$ is a pseudo-modification then $\Delta_\bff$ is an immersion.
\end{lem}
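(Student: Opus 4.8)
The plan is to realize $\Delta_\bff$ as a composition of two immersions and to invoke that immersions are stable under composition. Concretely, I would use the tautological factorization
\[
\calY\xrightarrow{\ \Gamma\ }\calY\times_XY\xrightarrow{\ \gtf\times_X\id_Y\ }\calX\times_XY,
\]
where $\Gamma=(\id_\calY,\psi_\bfY)$ is the graph of $\psi_\bfY\:\calY\to Y$. Reading off the two components, the composite sends a point to $(\gtf,\psi_\bfY)$, so it is exactly $\Delta_\bff$.

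The right-hand arrow $\gtf\times_X\id_Y$ is the base change of $\gtf\:\calY\to\calX$ along the projection $\calX\times_XY\to\calX$ (equivalently, the base change of the $X$-morphism $\gtf$ along $f\:Y\to X$); since $\gtf$ is an immersion and immersions are preserved by base change, this arrow is an immersion. For the left-hand arrow, note that $\Gamma$ is a section of the projection $\pr_1\:\calY\times_XY\to\calY$, and $\pr_1$ is the base change of $f\:Y\to X$. When $f$ is separated --- which holds for every pseudo-modification, where moreover $\gtf$ is an open immersion --- the map $\pr_1$ is separated and hence its section $\Gamma$ is a closed immersion; composing, $\Delta_\bff$ is an immersion, which already yields the ``in particular'' clause. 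For the general statement I would instead identify $\Gamma$ as the base change of the diagonal $\Delta_{Y/X}\:Y\to Y\times_XY$ along $\psi_\bfY\times_X\id_Y$, so that $\Gamma$ is an immersion because diagonals of (qcqs) algebraic spaces are immersions. Either way, $\Delta_\bff$ is a composition of immersions, hence an immersion.

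The only genuinely delicate point --- and the sole place where the algebraic-space setting, as opposed to schemes, intervenes --- is the immersion-ness of the graph $\Gamma$, i.e.\ of $\Delta_{Y/X}$ for an arbitrary $f$: for schemes this is automatic, while for algebraic spaces it is the one external input to be verified in the adopted formalism. I would keep in mind that in all the intended applications the relevant $f$ is separated (e.g.\ for pseudo-modifications), so that the section argument above applies directly and bypasses any appeal to $\Delta_{Y/X}$; this separated-$f$ reduction is the safe fallback. I note that this is the same cancellation principle already used in Remark~\ref{rem:OpenImmIsCart}, where a diagonal is shown to be an (open) immersion by factoring it through a separated projection.
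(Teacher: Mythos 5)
Your argument is correct where the paper needs it, and in fact the paper offers nothing to compare it with: its entire proof of this lemma is the single word ``Obvious.'' Your factorization $\calY\to\calY\times_XY\to\calX\times_XY$, with the second arrow an immersion by base change from $\gtf$ and the first a closed immersion because it is a section of the separated projection $\pr_1$ (separated since $f$ is), is exactly the cancellation pattern the authors themselves use in Lemma~\ref{propcomplem}(1), and it fully proves the ``in particular'' clause, which is the only form of the lemma invoked elsewhere in the paper. The one caveat concerns your proposed treatment of the fully general statement: the claim that $\Delta_{Y/X}$ is an immersion for an arbitrary morphism of qcqs algebraic spaces is \emph{not} automatic --- quasi-separatedness only makes the diagonal a quasi-compact monomorphism, and there are qcqs algebraic spaces that are not locally separated (e.g.\ the quotient of $\bbA^1$ by the \'etale equivalence relation identifying $x$ with $-x$ away from the origin). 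So, read literally for an arbitrary morphism of models, the general clause implicitly requires $f$ to be locally separated. You correctly isolate this as the sole delicate point and supply the separated-$f$ argument as a fallback, which is the right call: every use of the lemma in the paper is for pseudo-modifications, where $f$ is separated by definition, so your proof is complete for all intended purposes.
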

\begin{proof}
Obvious.
\end{proof}

\begin{lem}\label{modlem}
(1) Modifications, quasi-modifications, and pseudo-modifications, are preserved by compositions and base changes. In particular, if $\bfY\to\bfX$ and $\bfZ\to\bfX$ are modifications (resp. quasi-modifications, resp. pseudo-modifications) of models then so is $\bfY\times_\bfX\bfZ\to\bfX$.

(2) For any model $\bfX$, the families of modifications, quasi-modifications, and pseudo-modifications of $\bfX$ are filtered.
\end{lem}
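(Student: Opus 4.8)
The plan is to prove stability under composition and under base change for each of the three classes separately, and then to deduce both the fibre-product (``in particular'') assertion and the filteredness in part (ii) formally.

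For composition I would argue componentwise. Given $\bfg\:\bfZ\to\bfY$ and $\bff\:\bfY\to\bfX$, the generic component of $\bff\circ\bfg$ is $\gtf\circ\gtg$ and its model component is $f\circ g$. Open immersions (resp.\ isomorphisms) are stable under composition, and so are separated morphisms of finite type (resp.\ proper morphisms); this already settles pseudo-modifications and modifications. For quasi-modifications one additionally invokes Lemma~\ref{propcomplem}(2), so that the composite of two adic pseudo-modifications is again an adic pseudo-modification. Note that a modification remains strict under composition because a composite of isomorphisms is an isomorphism.

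The only genuinely non-formal point, and the one I expect to be the main obstacle, is base change of the \emph{model} component, since the fibre product of models is not the naive fibre product of model spaces. Here I would use the explicit description in Proposition~\ref{prop:basicpropofmodels}(3). Given $\bff\:\bfY\to\bfX$ in one of the classes and an arbitrary $\bfW\to\bfX$, the generic component of $\bfY\times_\bfX\bfW\to\bfW$ is the honest base change $\calY\times_\calX\calW\to\calW$ of $\gtf$, so open immersions and isomorphisms are preserved at once. For the model component, write $V$ for the model space of $\bfY\times_\bfX\bfW$, i.e.\ the schematic image of $\calY\times_\calX\calW\to Y\times_XW$; then the model component factors as the closed immersion $V\into Y\times_XW$ followed by the base change $Y\times_XW\to W$ of $f$. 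Since closed immersions are separated and of finite type (and proper), and since ``separated of finite type'' and ``proper'' are stable under base change and composition, the composite $V\to W$ is separated of finite type when $f$ is, and proper when $f$ is. Adicity under base change is exactly Lemma~\ref{propcomplem}(3). Combining these facts gives stability of each class under base change. The fibre-product assertion then follows by factoring $\bfY\times_\bfX\bfZ\to\bfX$ as the projection $\bfY\times_\bfX\bfZ\to\bfZ$, which is the base change of $\bfY\to\bfX$ along $\bfZ\to\bfX$, followed by $\bfZ\to\bfX$; both factors lie in the given class, hence so does the composite.

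For part (ii) I would check the usual axioms for a filtered family. Each family is nonempty because $\id_\bfX$ is a modification (hence also a quasi- and a pseudo-modification). Given two objects $\bfY_1\to\bfX$ and $\bfY_2\to\bfX$ of the same class, their fibre product $\bfY_1\times_\bfX\bfY_2\to\bfX$ lies in the same class by (i) and dominates both via the projections, providing a common upper bound. Finally, any two parallel $\bfX$-morphisms $\bfZ\rightrightarrows\bfY$ between objects of such a family coincide: the generic components agree because $\calY\to\calX$ is a monomorphism, and the model components agree because $\psi_\bfZ\:\calZ\to Z$ is schematically dominant while $Y\to X$ is separated (the equalizer is a closed subspace of $Z$ containing the schematically dense image of $\calZ$, hence is all of $Z$). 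Thus morphisms in these families are unique, and the three axioms hold, so the families are filtered. I expect the bookkeeping with the schematic image in the base-change step to be the only place requiring genuine care; everything else is formal stability of properties of morphisms together with Lemma~\ref{propcomplem}.
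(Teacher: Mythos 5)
Your proposal is correct and follows essentially the same route as the paper, whose proof is a one-line remark that (ii) follows from (i), that (i) is obvious for modifications and pseudo-modifications, and that the quasi-modification case rests on Lemma~\ref{propcomplem}. Your write-up simply supplies the details the paper omits --- notably the factorization of the model component of a base change through the closed immersion into $Y\times_XW$ given by Proposition~\ref{prop:basicpropofmodels}(3), and the observation that parallel morphisms in these families coincide by schematic dominance and separatedness --- and these details are all sound.
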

\begin{proof}
The second claim follows from the first one. The latter is obvious for modifications and pseudo-modifications, and the case of quasi-modifications follows from Lemma \ref{propcomplem}.
\end{proof}

For good models, one can say more.

\begin{lem}\label{normmodlem}
Let $\bff\:\bfY\to\bfX$ be a pseudo-modification. If $\bfX$ is good then,

(1) $\bfY$ is good and the diagonal $\Delta_\bff$ is an open immersion.

(2) $\bff$ is a quasi-modification if and only if it is semi-cartesian.
\end{lem}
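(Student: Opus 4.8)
The plan is to analyze the diagonal $\Delta_\bff\:\calY\to\calX\times_XY$ directly, exploiting that goodness of $\bfX$ makes the right projection an open immersion. Write $Q=\calX\times_XY$ and let $p\:Q\to\calX$, $q\:Q\to Y$ denote the two projections. Since $\bfX$ is good, $\psi_\bfX$ is an open immersion, so its base change $q$ is an open immersion; and $p$ is the base change of $f$, hence separated of finite type. By construction $p\circ\Delta_\bff=\gtf$ and $q\circ\Delta_\bff=\psi_\bfY$. As $\gtf$ is an open immersion, the preceding lemma already yields that $\Delta_\bff$ is an immersion, so the whole content of (i) is to upgrade this immersion to an \emph{open} immersion.

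For the upgrade I would first observe that $\Delta_\bff$ is schematically dominant onto $Q$. Indeed $\psi_\bfY=q\circ\Delta_\bff$ is schematically dominant onto $Y$ by the definition of a model, and $q$ is a quasi-compact open immersion; since all the spaces in sight are qcqs, $\Delta_\bff$ is quasi-compact and its schematic image $Z\subseteq Q$ satisfies $\ol Z=Y$ in $Y$, whence $Z=\ol Z\cap Q=Q$. Now I would invoke the general fact that a quasi-compact schematically dominant immersion is an open immersion: factoring $\Delta_\bff$ as a closed immersion $\calY\to V$ followed by an open immersion $V\hookrightarrow Q$, the schematic image of $\calY\to V$ is $\calY$ (closed immersion) and also equals the restriction to $V$ of the schematic image $Q$, namely $V$; hence $\calY=V$ and $\Delta_\bff$ is the open immersion $V\hookrightarrow Q$. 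Finally $\psi_\bfY=q\circ\Delta_\bff$ is then a composition of open immersions, so $\psi_\bfY$ is an open immersion and $\bfY$ is good; this proves (i).

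For (ii), the implication cartesian $\Rightarrow$ quasi-modification is immediate, since an isomorphism is proper, so $\bff$ is adic. Conversely, if $\bff$ is a quasi-modification then $\Delta_\bff$ is proper, while (i) exhibits it as an open immersion onto some open $V\subseteq Q$. Properness forces $V$ to be closed as well, so $V$ is clopen; combined with the schematic dominance established above (so $\ol V=Q$ and hence $V=Q$), the map $\Delta_\bff$ is an isomorphism and $\bff$ is cartesian.

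The one genuinely non-formal point is the upgrade in (i), i.e.\ recognizing when a schematically dominant immersion is open. The hypotheses that matter are the qcqs assumptions built into the definition of a model (guaranteeing that all relevant morphisms are quasi-compact and that schematic images behave well under open restriction) together with the schematic dominance of $\psi_\bfY$; once these are in place the rest is formal manipulation of immersions and properness. I would isolate the auxiliary statement about schematically dominant immersions as a separate observation, since it is reused in the proof of (ii).
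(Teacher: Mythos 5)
Your proof is correct and follows essentially the same route as the paper's: both arguments turn on the observation that a quasi-compact, schematically dominant immersion is an open immersion, and part (ii) is handled identically (a proper, schematically dominant open immersion is an isomorphism). The only difference is one of order --- the paper first shows $\psi_\bfY$ is an immersion by factoring the open immersion $\calY\into\calX\into X$ through $Y$ and then upgrades it, leaving the openness of $\Delta_\bff$ implicit, whereas you treat $\Delta_\bff$ first and recover $\psi_\bfY$ as a composite of open immersions, which has the minor merit of making the diagonal claim explicit.
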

\begin{proof}
(1) The open immersion $\calY\into\calX\into X$ factors through $\psi_\bfY\:\calY\to Y$, hence $\psi_\bfY$ is an immersion by \cite[Tag:07RK]{stacks}. Since $\psi_\bfY$ is schematically dominant, it is, in fact, an open immersion.

(2) By definition, a pseudomodification $\bff$ is quasi-modification if and only if the open immersion $\Delta_\bff$ is proper if and only if $\bff$ is semi-cartesian.
\end{proof}

\subsubsection{Riemann-Zariski spaces of models}
By the {\em Riemann-Zariski} or {\em RZ space} of a model $\bfX$ we mean the topological space $\RZ(\bfX)=\underleftarrow{\lim}_{\alp\in A}|X_\alp|$, where $\bff_\alp{\colon}\bfX_\alp\to \bfX$, $\alp\in A$, is the family of all modifications of $\bfX$. We claim that $\RZ$ is a functor. Indeed, if $\bff\:\bfY\to \bfX$ is a morphism of models then for any modification $\bfZ\to \bfX$ the base change $\bfZ\times_\bfX\bfY\to\bfY$ is a modification by Lemma~\ref{modlem}, and therefore there exists a natural continuous map $\RZ(\bff)\:\RZ(\bfY)\to\RZ(\bfX)$.

\begin{rem}\label{rem:RZextrastucture}
One may wonder whether the limit of $X_\alp$'s exists in a finer category. It is easy to see that the limit does not have to be representable in the category of algebraic spaces, so it is natural to seek for an enlargement of this category. For example, in the case of schematic models, a meaningful limit exists in the category of locally ringed spaces (see \cite{temrz}). It seems that in our situation the most natural framework is the category of strictly henselian topoi as introduced by Lurie (intuitively, this is a ringed topos such that the stalks of the structure sheaf are strictly henselian rings). However, even the foundations of such theory are only being elaborated; for example, the fact that algebraic spaces embed fully faithfully into the bicategory of such topoi was checked very recently by B. Conrad, see \cite[Th. 3.1.3]{conrad}). For this reason, we prefer not to develop this direction in the paper.
\end{rem}

\subsection{Blow ups}
\begin{defin}
Let $\bfX$ be a model, $\calJ\subset \calO_X$ an ideal of finite type such that $\psi_\bfX^{-1}\calJ=\calO_\calX$, and $\psi\:\calX\to Bl_\calJ(X)$ the natural map. Then the model $Bl_\calJ(\bfX):=(\calX, Bl_\calJ(X), \psi)$ is called the {\em blow up of $\bfX$ along $\calJ$}. An open submodel of a blow up is called a {\em quasi-blow up}.
\end{defin}

Any blow up (resp. quasi-blow up) is a modification (resp. quasi-modification). We will only consider blow ups of good models which can be identified with usual $\calX$-admissible blow ups of $X$ by the following remark.

\begin{rem}
Let $\bff\:\bfY\to\bfX$ be a blow up. If $\bfX$ is good then $\calY\toisom\calX$, $Y\to X$ is an $\calX$-admissible blow up, and $\bfY$ is good. Conversely, for an $\calX$-admissible blow up $Y\to X$, the model $\bfY:=(\calX\to Y)$ is good and $\bfY\to\bfX$ is a blow up of models.
\end{rem}

\subsubsection{Extension of blow ups}
An important advantage of blow ups over arbitrary modifications is that they can be easily extended from open submodels. This property is very useful in Raynaud's theory, and it will be crucial in \S\ref{globalsec} while gluing good quasi-modifications.

\begin{prop}\label{prop:extofblups}
Let $\bfX$ be a good model, $\bfU\into\bfX$ an open submodel, and $\bff\:\bfU'\to \bfU$ a blow up. Then there exists a blow up $\bfX'\to\bfX$ such that $\bfU'=\bfX'\times_\bfX\bfU$.
\end{prop}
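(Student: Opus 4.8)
The plan is to reduce the statement to an extension problem for the blow-up ideal of $\bff$: I would extend it from $U$ to $X$ by an essentially arbitrary finite type extension, and then correct it using a power of an ideal of definition of $\bfX$ so that it becomes a legitimate blow-up ideal of the good model $\bfX$ while keeping its restriction to $U$ unchanged.

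First I would fix an ideal of definition $\calI\subseteq\calO_X$ of $\bfX$, so that $Z:=|X|\setminus|\calX|=V(\calI)$ and $\calI|_\calX=\calO_\calX$ (Remark~\ref{goodrem}). The open submodel $\bfU$ is again good, with $\calU=\calX\cap U$ and complement $T:=|U|\setminus|\calU|=U\cap Z=V(\calI|_U)$; in particular $\calI|_U$ is an ideal of definition of $\bfU$. By the description of blow ups of good models as $\calX$-admissible blow ups, $\bff$ has $\calU'\toisom\calU$ and $U'=Bl_\calJ(U)$ for a finite type ideal $\calJ\subseteq\calO_U$ with $\psi_\bfU^{-1}\calJ=\calO_\calU$. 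This last condition means $V(\calJ)\subseteq T=V(\calI|_U)$, equivalently $\calI|_U\subseteq\sqrt{\calJ}$; since $U$ is quasi-compact and both ideals are of finite type, I may choose $n\gg0$ with $(\calI|_U)^n\subseteq\calJ$.

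Next I would extend $\calJ$ to a finite type quasi-coherent ideal $\calJ_0\subseteq\calO_X$ with $\calJ_0|_U=\calJ$, which is possible because a finite type quasi-coherent ideal on a quasi-compact open of a qcqs algebraic space extends to one on the whole space (a standard limit/approximation result, cf.\ \cite{stacks,rydapr}). I then set $\calJ':=\calJ_0+\calI^n$, a finite type ideal of $\calO_X$, and check the three required properties: $\calJ'|_\calX=\calO_\calX$ because $\calI^n|_\calX=\calO_\calX$, so that $\psi_\bfX^{-1}\calJ'=\calO_\calX$ and $\bfX':=Bl_{\calJ'}(\bfX)$ is genuinely a blow up of the good model $\bfX$; next $V(\calJ')=V(\calJ_0)\cap V(\calI^n)\subseteq V(\calI)=Z$; and finally $\calJ'|_U=\calJ+(\calI|_U)^n=\calJ$, using $(\calI|_U)^n\subseteq\calJ$. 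Thus the role of the summand $\calI^n$ is precisely to force the extended ideal to be trivial on $\calX$ and supported in $Z$ without disturbing its restriction to $U$.

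To conclude, I would use that blow ups commute with the flat base change along the open immersion $U\into X$: the model space of $\bfX'\times_\bfX\bfU$ is $Bl_{\calJ'|_U}(U)=Bl_\calJ(U)=U'$, while by Proposition~\ref{prop:basicpropofmodels}(3) its generic space is $\calX'\times_\calX\calU=\calU=\calU'$ (recall $\calX'=\calX$ as $\bfX'\to\bfX$ is strict), and the map $\calU'\to U'$ is $\psi_{\bfU'}$, which is schematically dominant, so the schematic image is all of $U'$. Hence $\bfX'\times_\bfX\bfU=\bfU'$, as required. I expect the one non-formal ingredient to be the extension of a finite type quasi-coherent ideal from the quasi-compact open $U$ to the qcqs algebraic space $X$ (together with the uniform choice of $n$, which also relies on quasi-compactness); the remainder is bookkeeping with ideals of definition.
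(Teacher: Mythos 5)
Your proof is correct, and it reaches the conclusion by a mechanism that differs from the paper's in one essential step. Both arguments share the same non-formal input: extending a finite type quasi-coherent ideal from the quasi-compact open $U$ to the qcqs space $X$ via Rydh's approximation. But the paper achieves $\calX$-admissibility of the extended ideal differently: it forms the schematic closure $Z$ in $X$ of the center $V(\calJ)\subseteq U$, observes that $Z$ is disjoint from $\calX$, writes $I_Z$ as a filtered colimit of finite type ideals restricting to $\calJ$ on $U$, and uses quasi-compactness of $\calX$ to pick one whose vanishing locus already misses $\calX$. You instead take an arbitrary finite type extension $\calJ_0$ and correct it by adding $\calI^n$ for an ideal of definition $\calI$, with $n$ chosen (again by quasi-compactness) so that $(\calI|_U)^n\subseteq\calJ$, which makes the correction invisible on $U$ while forcing triviality on $\calX$. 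Your route avoids the closure and the cofinality argument and is more explicit; it is also closer in spirit to the $\calJ_m=\calJ+\calI^m$ device the paper itself deploys later when separating a closed set from a valuation. The cost is the extra lemma that a finite type ideal contained in $\sqrt{\calJ}$ has a uniform power inside $\calJ$ on a qcqs space -- true, but worth stating as its own step. All the remaining verifications (restriction to $U$, compatibility of blow ups with the open immersion, identification of the fiber product of models via schematic dominance of $\calU'\to U'$) are carried out correctly.
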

\begin{proof}
The spaces $U$, $\calX$, and $\calU$ are open subspaces of $X$, and $\calU=U\cap\calX$ by Remark~\ref{rem:OpenImmIsCart}. By definition, $\bfU'=Bl_{\calI}(\bfU)$ for a finite type ideal $\calI\subseteq\calO_U$ such that the subspace $W\subset U$ defined by $\calI$ is disjoint from $\calU$. Then $W$ is also disjoint from $\calX$, and hence so is the schematic closure of $W$ in $X$, that we denote by $Z$. By \cite[Theorem A]{rydapr}, the ideal $I_Z\subset\calO_X$ defining $Z$ is the filtered colimit of ideals of finite type $\calJ_\alpha$ that extend $\calI$ to $X$. By quasi-compactness of $\calX$, there exists $\alpha$ such that $\calX$ is disjoint from the subspace defined by $\calJ_\alpha$. Then $\bfX':=Bl_{\calJ_\alpha}(\bfX)$ is as needed.
\end{proof}

\begin{prop}\label{prop:compqblups}
Let $\bfX$ be a good model. If $\bfZ\to\bfY$ and $\bfY\to\bfX$ are blow ups (resp. quasi-blow ups) then so is the composition $\bfZ\to\bfX$.
\end{prop}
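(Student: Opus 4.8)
The plan is to reduce the quasi-blow up case to the blow up case, and then to prove the blow up case by the ideal-theoretic argument that underlies the composition of admissible blow ups in Raynaud's theory. First I would dispose of the quasi-blow up case assuming the blow up case. Suppose $\bfZ\into\bfY'$ is an open submodel of a blow up $\bfY'\to\bfY$, and $\bfY\into\bfX'$ is an open submodel of a blow up $\bfX'\to\bfX$. Since $\bfX$ is good, $\bfX'$ is good, and $\bfY$, being an open submodel of $\bfX'$, is good as well. I would then apply the extension of blow ups (Proposition~\ref{prop:extofblups}) to the open submodel $\bfY\into\bfX'$ and the blow up $\bfY'\to\bfY$ to obtain a blow up $\bfX''\to\bfX'$ with $\bfY'=\bfX''\times_{\bfX'}\bfY$. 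The projection $\bfY'\into\bfX''$ is then an open immersion, being a base change of $\bfY\into\bfX'$, so $\bfZ\into\bfY'\into\bfX''$ exhibits $\bfZ$ as an open submodel of $\bfX''$. As $\bfX''\to\bfX'\to\bfX$ is a composition of blow ups of good models, the blow up case gives that $\bfX''\to\bfX$ is a blow up, whence $\bfZ\to\bfX$ is a quasi-blow up.

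For the blow up case I would first translate the statement into one about admissible blow ups of the model space. By the remark preceding the proposition, a blow up of a good model is the same as an $\calX$-admissible blow up: here $\calY\cong\calX$ and $Y=Bl_\calJ(X)$ for a finite type ideal $\calJ\subseteq\calO_X$ with $\calJ\calO_\calX=\calO_\calX$, and likewise $\calZ\cong\calX$ and $Z=Bl_{\calJ'}(Y)$ for a finite type ideal $\calJ'\subseteq\calO_Y$ restricting to the unit ideal over $\calX\cong\calY$. Thus it suffices to produce a finite type ideal $\calK\subseteq\calO_X$ with $\calK\calO_\calX=\calO_\calX$ together with an $X$-isomorphism $Bl_\calK(X)\cong Bl_{\calJ'}(Y)$; the model $Bl_\calK(\bfX)$ then coincides with $\bfZ$, since both have generic space $\calX$ and isomorphic model spaces over $X$.

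Constructing $\calK$ is the heart of the argument, and is the exact analogue of the composition lemma for admissible formal blow ups in Raynaud's theory (see \cite{BL}). Writing $\calL=\calJ\calO_Y$, which is an invertible ideal that is relatively ample for the projective morphism $b\:Y\to X$, I would use Serre's theorems (finiteness of $b_*$ and relative global generation of high twists) to show that for $n\gg0$ the finite type ideal $\calJ'\calL^n\subseteq\calJ^n\calO_Y$ descends to a finite type ideal $\calK\subseteq\calO_X$ with $\calK\calO_Y=\calJ'\calL^n$, arranged so that $\calJ^{n+1}\subseteq\calK\subseteq\calJ^n$. The isomorphism $Bl_\calK(X)\cong Bl_{\calJ'}(Y)$ would then follow from the universal property of blow ups in both directions: on $Bl_{\calJ'}(Y)$ the ideals $\calJ'$ and $\calL$ pull back to invertible ideals, hence so does $\calK$, giving a morphism $Bl_{\calJ'}(Y)\to Bl_\calK(X)$ over $X$; conversely the sandwiching $\calJ^{n+1}\subseteq\calK\subseteq\calJ^n$ forces $\calJ\calO_W$ to be invertible on $W=Bl_\calK(X)$, so that $W\to X$ factors through $Y$, and then $\calJ'\calO_W$ is invertible, giving a morphism $W\to Bl_{\calJ'}(Y)$ over $Y$; uniqueness makes the two mutually inverse. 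Finally, restricting $\calK\calO_Y=\calJ'\calL^n$ over $\calX\cong\calY$ and using that $\calJ$ and $\calJ'$ are admissible yields $\calK\calO_\calX=\calO_\calX$, so $\calK$ is $\calX$-admissible, as needed.

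I expect the main obstacle to be precisely this ideal-descent step: producing $\calK$ with $\calK\calO_Y=\calJ'\calL^n$ and the sandwiching $\calJ^{n+1}\subseteq\calK\subseteq\calJ^n$, which relies on the relative ampleness of $\calL$ and on Serre's theorems. For algebraic spaces these tools, together with the relative Proj construction, are available for projective morphisms of qcqs algebraic spaces; alternatively one may reduce to the scheme case (cf.\ \cite{stacks}) via an \'etale presentation of $X$, the ideal $\calK$ being canonical and hence descending. Once $\calK$ is in hand, the remaining steps are formal consequences of the universal property of blow ups.
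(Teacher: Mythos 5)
Your handling of the quasi-blow up case is exactly the paper's argument: reduce to the blow up case by extending the blow up of the intermediate model over the ambient blow up via Proposition~\ref{prop:extofblups}. For the blow up case the paper gives no argument of its own --- it cites \cite[Lem.~5.1.4]{RG} and \cite[Lemma 1.2]{con} and only remarks that for algebraic spaces one replaces Zariski-local arguments by \'etale-local ones and uses Rydh's approximation. So your decision to reconstruct the Raynaud--Deligne--Conrad argument is in the spirit of the paper, and its overall shape (use relative ampleness of $\calL=\calJ\calO_Y$ to descend $\calJ'\calL^n$ to an admissible ideal $\calK$ on $X$, then identify the two blow ups) is the right one.

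However, one step of the reconstruction is wrong as stated: the sandwiching $\calJ^{n+1}\subseteq\calK\subseteq\calJ^n$ does \emph{not} force $\calJ\calO_W$ to be invertible on $W=Bl_\calK(X)$, so it does not produce the factorization $W\to Y$. Take $X=\bbA^2=\Spec k[u,v]$, $\calJ=(u,v)$, $\calK=(u,v^2)$. Then $\calJ^2\subseteq\calK\subseteq\calJ$, and one even has $\calK\calO_Y=\calJ'\calL$ for an admissible finite type ideal $\calJ'$ on $Y=Bl_\calJ(X)$ (the ideal of the point where the strict transform of $\{u=0\}$ meets the exceptional divisor), so $\calK$ has every property your construction guarantees. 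Yet $W=Bl_\calK(X)$ contains the chart $\Spec k[u,v,B]/(v^2-uB)$, on which $(u,v)\calO_W$ requires two generators at the origin, hence is not invertible; $W\to X$ does not factor through $Y$, and $W$ (singular) is not isomorphic to $Bl_{\calJ'}(Y)$ (smooth). The repair, which is what \cite[Lemma 1.2]{con} actually implements, is to take $\calK$ of the form $\calJ\cdot\calK_0$ (or $\calJ^m\calK_0$) with $\calK_0\calO_Y=\calJ'\calL^{n}$, and to use the identity $Bl_{\calJ\calK_0}(X)\cong Bl_{\calK_0\calO_Y}(Y)=Bl_{\calJ'\calL^n}(Y)=Bl_{\calJ'}(Y)$; here the factorization of $Bl_{\calJ\calK_0}(X)\to X$ through $Y$ comes from the local chart computation showing that each factor of a product ideal becomes invertible on the blow up of the product, not from a containment between powers. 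With that correction, and the \'etale-local and approximation adjustments you already flag, your proof goes through and coincides with the paper's.
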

\begin{proof}
For schematic models the case of blow ups follows from the fact that composition of $\calX$-admissible blow ups of the algebraic space $X$ is an $\calX$-admissible blow up by \cite[Lem. 5.1.4]{RG}. Note that the proof in \cite{RG} is incomplete, and the complete proof (including an additional argument due to Raynaud) can be found in \cite[Lemma 1.2]{con}. In general, the proof is almost identical to the proof in \cite{con}, and the only difference is that one must replace Zariski-local arguments with \'etale-local, and use the approximation theory of Rydh \cite[Corollary 4.12]{rydapr} instead of \cite[I, 9.4.7; $\rm{IV_1}$, 1.7.7]{ega}.

Assume, now, that the morphisms are quasi-blow ups. Then $\bfY\subseteq\bfY'$ and $\bfZ\subseteq\bfZ'$ for blow ups $\bfY'\to\bfX$ and $\bfZ'\to\bfY$. The latter blow up can be extended to a blow up $\bfZ''\to\bfY'$ by Proposition~\ref{prop:extofblups}. So, the composition $\bfZ''\to\bfY'\to\bfX$ is a blow up, and we obtain that $\bfZ\subset\bfZ'\subset\bfZ''$ is a quasi-blow up of $\bfX$.
\end{proof}

\section{Relative birational spaces}\label{relbirsec}

\subsection{Semivaluations of models}

\subsubsection{Valuation models}
We refer to \cite[\S5.1.1]{pruf} for the definition of valuation algebraic spaces. By a {\em valuation model} we mean a model $\bfT$ such that $T$ is a valuation algebraic space and $\calT$ is its generic point.

\subsubsection{Valuative diagrams}
A valuative diagram as defined in \cite[\S5.2.1]{pruf} can be interpreted as a morphism $\bfv\:\bfT\to\bfX$ from a valuation model $\bfT$ to a model $\bfX$ such that $v\:T\to X$ is separated.

\subsubsection{Semivaluations}\label{semivalsec}
By a {\em semivaluation} of a model $\bfX$ we mean a valuative diagram $\bfT\to\bfX$ such that $\calT\to \calX$ is a Zariski point. It follows from \cite[Lemma~5.2.3]{pruf} that any valuative diagram $\bfS\to\bfX$ factors uniquely through a semivaluation $\bfT\to\bfX$ such that $T\to S$ is surjective. As a corollary, it is shown in \cite[Proposition~5.2.9]{pruf} that semivaluations can be used to test properness of $\psi_\bfX$.

\subsubsection{The sets $\Spa(\bfX)$}
The set of all semivaluations of $\bfX$ is denoted by $\Spa(\bfX)$. For any separated morphism of models $\bff\:\bfY\to\bfX$ and a semivaluation $\bfT\to\bfY$, the composition $\bfT\to\bfX$ is a valuative diagram, hence induces a semivaluation of $\bfX$ by \cite[Lemma~5.2.3]{pruf}. This defines a map $\Spa(\bff)\:\Spa(\bfY)\to\Spa(\bfX)$ in a functorial way, and hence $\Spa$ is a functor from the category of models with separated morphisms to the category of sets.

\subsubsection{Adic semivaluations}\label{adicvalsec}
We say that a semivaluation $\bfv\:\bfT\to\bfX$ is adic if it is adic as a morphism of models. It is proved in the refined valuative criterion \cite[Theorem~5.2.14]{pruf} that adic semivaluations suffice to test properness of $\psi_\bfX$.

\begin{rem}\label{rem:spa}
(i) If $\bfX$ is schematic then semivaluations of $\bfX$ are exactly the $X$-valuations of $\calX$ as defined in \cite[\S3.1]{temrz}.

(ii) For the sake of comparison, we note that in adic geometry of R. Huber, one defines affinoid spaces to be the sets $\Spa(A,A^\rhd)$ of all continuous semivaluations of the morphism $\Spec(A)\to\Spec(A^\rhd)$, equipped with a natural topology and a structure sheaf. The continuity condition is empty if the topology on $A$ is discrete, and is equivalent to adicity of the semivaluation otherwise.
\end{rem}

\begin{lem}\label{adicsemivallem}
Assume that $\bfv\:\bfS\to\bfX$ is a semivaluation and $\bff\:\bfT\to\bfS$ is a separated morphism of valuation models such that $f$ is surjective. If the composed  valuative diagram $\bfu\:\bfT\to\bfX$ is adic then $\bfv$ is adic.
\end{lem}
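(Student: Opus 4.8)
The plan is to reduce adicity to a property of the diagonals and then descend that property along the surjection $f$. Recall that $\bfv$ adic means $\Delta_\bfv\:\calS\to\calX\times_XS$ is proper, and likewise $\bfu$ adic means $\Delta_\bfu\:\calT\to\calX\times_XT$ is proper. Since $\bfS$ and $\bfT$ are valuation models, $\psi_\bfS$ and $\psi_\bfT$ are the inclusions of the generic points and hence monomorphisms; as these factor the projections of the diagonals to $S$ and $T$, both $\Delta_\bfv$ and $\Delta_\bfu$ are monomorphisms. In particular $\bfv$ (resp. $\bfu$) is adic if and only if $\Delta_\bfv$ (resp. $\Delta_\bfu$) is a closed immersion, because a proper monomorphism is a closed immersion.

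First I would record the factorizations coming from $\bfu=\bfv\circ\bff$. Let $p\:\calX\times_XT\to\calX\times_XS$ be the base change of $f$; note $\calX\times_XT=(\calX\times_XS)\times_ST$ since $u=v\circ f$, so $p$ is the projection. Let $q$ be the base change of $\Delta_\bfv$ along $p$; its source is $\calS\times_{\calX\times_XS}(\calX\times_XT)=\calS\times_ST$ (formed via $\psi_\bfS$ and $f$). A direct check on points then gives $\Delta_\bfu=q\circ\Delta_\bff$, where $\Delta_\bff\:\calT\to\calS\times_ST$ is the diagonal of $\bff$.

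The key step is to show that $\Delta_\bff$ is an isomorphism, i.e. that the resulting square relating $\Delta_\bfu$ and $\Delta_\bfv$ is cartesian. On the one hand $q$ is separated, being a base change of the monomorphism $\Delta_\bfv$; since $\Delta_\bfu=q\circ\Delta_\bff$ is proper, the standard cancellation for proper morphisms (via the graph factorization, which is a closed immersion as $q$ is separated) shows that $\Delta_\bff$ is proper, hence a closed immersion because it is also a monomorphism. On the other hand $\calS\times_ST$ is the generic fiber of $f$, hence integral with generic point $\calT$ — here one uses that $T$ is a valuation space with generic point $\calT$ and that $\gtf$ carries $\calT$ to the generic point $\calS$ of $S$ — so $\Delta_\bff$ is schematically dominant. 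A schematically dominant closed immersion is an isomorphism, so $\Delta_\bff$ is an isomorphism and the square is cartesian.

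It remains to descend. As the square is cartesian, $\Delta_\bfu$ is the base change of $\Delta_\bfv$ along $p$, and $p$ is surjective and quasi-compact because $f$ is (surjective by hypothesis, quasi-compact since the spaces are qcqs). Now $\Delta_\bfu$ proper implies $\Delta_\bfu$ universally closed, universal closedness descends along the surjective quasi-compact morphism $p$, and therefore $\Delta_\bfv$ is universally closed; being also a monomorphism, it is a closed immersion, hence proper, so $\bfv$ is adic. I expect the only genuine subtlety to be the verification that $\Delta_\bff$ is schematically dominant with $\calT$ as the generic point of $\calS\times_ST$ (this is where the valuation structure is used) — the rest is formal manipulation with proper morphisms, monomorphisms, and topological descent along surjections.
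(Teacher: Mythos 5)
Your reduction of adicity to the diagonals, the factorization $\Delta_\bfu=q\circ\Delta_\bff$, the cancellation argument showing that $\Delta_\bff$ is a proper monomorphism and hence a closed immersion, and the identification of $\calS\times_ST$ with the reduced generic fibre of $f$ (so that $\Delta_\bff$ is schematically dominant, hence an isomorphism) are all correct, and this is a genuinely different route from the paper's: the paper argues by contraposition using \cite[Lemma~4.2.8]{pruf} --- if $\bfv$ is not adic then $\calS\to\calX$ extends to a pro-open subspace $U\into S$ strictly larger than $\calS$, whose preimage in $T$ is strictly larger than $\calT$ by surjectivity of $f$, contradicting adicity of $\bfu$. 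The genuine gap is in your last step. The principle that universal closedness descends along a surjective quasi-compact morphism is false. Take $S=\Spec(R)$ with $R$ a discrete valuation ring with fraction field $K$ and residue field $k$, let $p\:\Spec(K)\coprod\Spec(k)\to S$ be the surjective quasi-compact cover by the two points, and let $g\:\Spec(K)\to S$ be the inclusion of the generic point: the base change of $g$ along $p$ is an open and closed immersion, hence universally closed (even a closed immersion), while $g$ itself is not even closed. This is not a contrived pathology; it has exactly the shape of the morphism your argument must control ($\Delta_\bfv$ is a monomorphism onto a single point of $\calX\times_XS$), so surjectivity and quasi-compactness of $p$ alone cannot suffice.

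The argument is repairable, but you must extract more from the hypotheses than the topological surjectivity of $f$. The morphism $f\:T\to S$ is in fact flat: since $\gtf$ carries the generic point $\calT$ to $\calS$, \'etale-locally $f$ is given by an injective homomorphism of semi-local Pr\"ufer rings, so $\calO_T$ is torsion-free over $\calO_S$, and torsion-free modules over Pr\"ufer domains are flat. Hence $p$ is faithfully flat and quasi-compact, and the property of being a closed immersion (equivalently, universal closedness of the monomorphism $\Delta_\bfv$) descends along $p$ by fpqc descent; this closes the gap. Even so, the resulting proof is considerably longer than the paper's, which uses the surjectivity of $f$ only at the level of points (preimages of pro-open subspaces strictly containing the generic point) rather than through descent of morphism properties, so you may want to weigh whether the extra machinery buys you anything here.
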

\begin{proof}
By \cite[Lemma~5.2.12]{pruf}, if $\bfv$ is not adic then the morphism $\calS\to\calX$ extends to a pro-open subspace $U\into S$ strictly larger than $\calS$. Therefore, $\calT\to\calX$ extends to the preimage $V\into T$ of $U$. Since $V$ is strictly larger than $\calT$ by the surjectivity of $f$, we obtain that $\bfu$ is not adic, which is a contradiction.
\end{proof}

\subsubsection{The sets $\Val(\bfX)$}\label{subsec:adicsemival}
We denote by $\Val(\bfX)$ the set of all adic semivaluations of a model $\bfX$. It is a subset of $\Spa(\bfX)$, and we denote the embedding map by $\iota_\bfX\:\Val(\bfX)\into\Spa(\bfX)$. If $\bff\:\bfY\to\bfX$ is a separated adic morphism of models then for an adic semivaluation $\bfT\to\bfY$ the composition $\bfT\to\bfX$ is adic by Lemma~\ref{propcomplem}, and hence $\Spa(\bff)$ takes $\Val(\bfY)$ to $\Val(\bfX)$ by Lemma~\ref{adicsemivallem}. Set $\Val(\bff):=\Spa(\bff)|_{\Val(\bfY)}$ making $\Val$ to a functor from the subcategory of models with separated adic morphisms to the category of sets.

\subsubsection{The retraction $\Spa(\bfX)\to\Val(\bfX)$}\label{retrsec}
Let $\bfT\to\bfX$ be a semivaluation. By \cite[Lemma~5.2.12(i)]{pruf}, there exists a maximal pro-open subspace $W\into T$ such that the $X$-morphism $\calT\to\calX$ extends to $W$. Moreover, $W$ is quasi-compact and has a closed point $w$. Set $S$ to be the closure of $w$ in $T$, $\calS:=w$, and $\bfS:=(\calS\to S)$. Then $r_\bfX(\bfT\to\bfX):=(\bfS\to\bfX)$ is an adic semivaluation by \cite[Lemma~5.2.12(ii)]{pruf}.

\subsubsection{Full functoriality of $\Val$}\label{fullsec}
It is easy to see that if $\bff\:\bfY\to\bfX$ is an adic morphism then the retractions $r_\bfY$ and $r_\bfX$ are compatible with the maps $\Spa(\bff)$ and $\Val(\bff)$, i.e., $r_\bullet$ is a natural transformation of these functors.

Furthermore, we can use the retractions to extend the functor $\Val$ to the whole category of models with separated morphisms. Indeed, for a separated morphism $\bff\:\bfY\to\bfX$ we simply define $\Val(\bff)$ to be the composition $r_\bfX\circ\Spa(\bff)\circ \iota_{\bfY}$.

\subsubsection{The reduction maps}
By the {\em center} of a semivaluation $\bfv\:\bfT\to\bfX$ we mean the image $x\in X$ of the closed point of $T$. The maps $\opi_\bfX\:\Spa(\bfX)\to X$ and $\pi_\bfX\:\Val(\bfX)\to X$ that associate to a semivaluation its center are called the {\em reduction maps}. Plainly, $\opi_\bfX=\pi_\bfX\circ r_\bfX$, and the reduction maps $\opi_\bullet$ and $\pi_\bullet$ commute with $\Spa(\bff)$ and $\Val(\bff)$ for any separated morphism $\bff\:\bfY\to\bfX$.

\begin{prop}\label{prop:surjofthered}
The reduction maps $\opi_\bfX$ and $\pi_\bfX$ are surjective.
\end{prop}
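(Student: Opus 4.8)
The plan is to deduce both assertions from the single statement that every point $x\in X$ is the center of some (a priori non-adic) semivaluation. Indeed, this at once gives surjectivity of $\opi_\bfX$; and for such a semivaluation $\bfS\to\bfX$ the identity $\opi_\bfX=\pi_\bfX\circ r_\bfX$ established above shows that the adic semivaluation $r_\bfX(\bfS\to\bfX)$ has the same center $x$, so surjectivity of $\pi_\bfX$ follows as well. Thus I only need to realize an arbitrary $x\in X$ as a center.

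To build a valuative diagram centered at $x$, the first issue to address is that an algebraic space has no localization at a point, so I would pass to an \'etale chart. Choose a scheme $U$ with an \'etale morphism $U\to X$ and a point $u\in U$ over $x$, and put $R=\calO_{U,u}$, a local ring whose closed point maps to $x$. The crucial step is then the non-emptiness of $\calX\times_X\Spec(R)$. This holds because $\Spec(R)\to X$ is flat (a localization of an \'etale morphism), because $\psi_\bfX$ is qcqs (being separated between qcqs spaces) so that its schematic dominance is preserved under flat base change, and because a schematically dominant morphism with non-empty target must have non-empty source. I would pick a point $\xi\in\calX\times_X\Spec(R)$ with residue field $K=\kappa(\xi)$; it yields a structural map $R\to K$ together with a compatible map $\Spec(K)\to\calX$.

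With $\xi$ at hand, the valuation existence theorem supplies a valuation ring $\calO\subseteq K$ dominating $R$, i.e. with $R\to\calO$ and $\gtm_\calO\cap R=\gtm_R$. Setting $S=\Spec(\calO)$ and $\calS=\Spec(K)$ (its generic point), I obtain a valuative diagram $\bfS\to\bfX$: the generic component $\calS\to\calX$ is given by $\xi$, the model component is the separated composite $S\to\Spec(R)\to U\to X$, the square commutes since $\xi$ lies in the fiber product over $X$, and the image of the closed point of $S$ in $X$ is $x$ by domination. Hence the center of $\bfS\to\bfX$ is exactly $x$. Finally, since $\calS\to\calX$ need not be a Zariski point (the field $K$ may properly extend the residue field of the image of $\xi$ in $\calX$), I would invoke \cite[Lemma 4.2.2]{pruf} to factor $\bfS\to\bfX$ through an honest semivaluation $\bfT\to\bfX$ with $T\to S$ surjective; the surjectivity guarantees that the center is unchanged, so $\opi_\bfX(\bfT\to\bfX)=x$.

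The main obstacle is precisely the non-emptiness of $\calX\times_X\Spec(R)$: this is where both the schematic dominance hypothesis on $\psi_\bfX$ and the passage to an \'etale chart (needed to circumvent the absence of a localization of $X$ at $x$) are genuinely used. Once a point $\xi$ has been produced, the remaining steps are the classical valuation-existence argument together with the bookkeeping of \cite[Lemma 4.2.2]{pruf} that turns the valuative diagram into a semivaluation with the same center.
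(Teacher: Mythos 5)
Your proof is correct and follows essentially the same route as the paper's: both reduce $\pi_\bfX$ to $\opi_\bfX$ via $\opi_\bfX=\pi_\bfX\circ r_\bfX$, use schematic dominance of $\psi_\bfX$ to produce a point of $\calX$ lying over a generalization of $x$ (you phrase this as non-emptiness of $\calX\times_X\Spec(\calO_{U,u})$ for an \'etale chart $U\to X$, the paper as the existence of $y\in\calX$ with $\psi_\bfX(y)$ a generalization of $x$ after reducing to the affinoid case), and then invoke the valuation existence theorem. The only cosmetic difference is that the paper builds the valuation ring in two steps (first a valuation ring of $k(u)$ centered at $x$, then an extension to $k(y)$) so that the resulting diagram is already a semivaluation, whereas you dominate $\calO_{U,u}$ directly inside $\kappa(\xi)$ and then pass to a semivaluation with the same center via \cite[Lemma 4.2.2]{pruf}.
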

\begin{proof}
It is sufficient to prove the surjectivity of $\opi_\bfX$ since $\opi_\bfX=\pi_\bfX\circ r_\bfX$. Without loss of generality we may assume that $\bfX$ is affinoid since $\opi_\bfX$ commutes with $\Spa(\bff)$ for separated morphisms $\bff$. Let $x\in X$ be a point. By schematic dominance of $\psi_\bfX$, there exists a point $y\in \calX$ such that $u:=\psi_\bfX(y)$ is a generalization of $x$. Indeed, for affine schemes this follows from \cite[Tag:00FK]{stacks}, and the general case follows by passing to an affine presentation. Take any valuation ring $R$ of $k(u)$ such that $u\to X$ extends to a morphism $\Spec(R)\to X$ that takes the closed point to $x$, and let $R'$ be any valuation ring of $k(y)$ that extends $R$. Set $\bfT:=(y\to\Spec(R'))$. Then the natural semivaluation $\bfv\:\bfT\to\bfX$ belongs to $\opi_\bfX^{-1}(x)$.
\end{proof}
\begin{rem}
The term ``reduction map" is grabbed from the theory of formal models of non-archimedean spaces. Another reasonable name would be a ``specialization map".
\end{rem}

\subsection{Topologies}\label{subsubsec:top}

\subsubsection{Featured subsets of $\Spa(\bfX)$ and $\Val(\bfX)$}

\begin{lem}\label{lem1}
Assume that $\bff\:\bfY\to\bfX$ is a pseudo-modification. Then,

(1) The map $\Spa(\bff)$ is injective.

(2) If $\bff$ is a quasi-modification then $\Val(\bff)$ is injective.

(3) If $\bfT\to\bfX$ is an adic semivaluation that lifts to $\bfY$ then $\bfT\to\bfY$ is also adic.

(4) The following are equivalent: (a) $\bff$ is a modification, (b) $\Spa(\bff)$ is a bijection, (c) $\Spa(\bff)$ takes $\Val(\bfY)$ to $\Val(\bfX)$ and induces a bijection between them.
\end{lem}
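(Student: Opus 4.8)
For (1), suppose two semivaluations $\bfT_i\to\bfY$ ($i=1,2$) have equal image in $\Spa(\bfX)$. Composing with the monomorphism $\gtf\:\calY\into\calX$ identifies the underlying valuation models over $\bfX$ and forces the generic components $\calT\to\calY$ to coincide; so, after identifying $\bfT_1=\bfT_2=\bfT$, I am left with two maps $g_1,g_2\:T\to Y$ satisfying $fg_1=fg_2$ and agreeing on $\calT$. As $f$ is separated, the equalizer of $g_1,g_2$ is a closed subspace of $T$ containing the schematically dense generic point of the reduced valuation space $T$, hence equals $T$; thus $g_1=g_2$. (Only $\gtf$ being a monomorphism and $f$ being separated are used; finite type plays no role.) Part (2) is then the restriction of the injective map $\Spa(\bff)$ to $\Val(\bfY)$ — legitimate because a quasi-modification is adic, so $\Spa(\bff)$ carries $\Val(\bfY)$ into $\Val(\bfX)$ and $\Val(\bff)$ is by definition this restriction. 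Part (3) is Lemma~\ref{propcomplem}(4) applied to $\bfv\:\bfT\to\bfY$ and $\bff$: the composite $\bff\circ\bfv$ is adic by hypothesis and $\gtf$ is separated (being an open immersion), so $\bfv$ is adic.

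For (4), I would establish (a)$\Rightarrow$(b), (a)$\Rightarrow$(c), and then derive (a) from either (b) or (c). Assume (a). Injectivity of $\Spa(\bff)$ is (1); for surjectivity, given a semivaluation $\bfT\to\bfX$, transport its generic point into $\calY$ by the isomorphism $\gtf^{-1}$ and lift $T\to X$ to $T\to Y$ by the valuative criterion of properness for the proper morphism $f$ (existence part), producing a preimage in $\Spa(\bfY)$; this gives (b). Since a modification is a strict pseudo-modification and hence a quasi-modification, $\Spa(\bff)$ carries $\Val(\bfY)$ into $\Val(\bfX)$; injectivity on $\Val(\bfY)$ is (2), and the lift just produced is adic by (3) whenever $\bfT\to\bfX$ is, yielding the bijection in (c).

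Conversely, both (b) and (c) supply surjectivity of $\Val(\bff)\:\Val(\bfY)\to\Val(\bfX)$: for (c) this is assumed, and for (b) every adic semivaluation of $\bfX$ lifts through the surjection $\Spa(\bff)$ and its lift is adic by (3). From this surjectivity I recover (a). First, $\gtf$ is an isomorphism: each $\xi\in|\calX|$ is the generic point of the trivial semivaluation $(\xi\to X)$, which is adic, so $\xi$ lies in the image $|\calY|$ of the open immersion $\gtf$, forcing $\gtf$ to be surjective and hence an isomorphism; thus $\bff$ is strict. Finally, to see that $f$ is proper, note that $f$ is schematically dominant (since $\psi_\bfX=f\circ\psi_\bfY$ is), so $(Y,X,f)$ is a model with structure morphism $f$, and by the refined valuative criterion \cite[Theorem~4.2.9]{pruf} it suffices that every adic semivaluation testing $f$ lift to $Y$ — which is what surjectivity of $\Val(\bff)$ provides. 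The delicate point, and the one I expect to demand real care, is precisely this last reduction: surjectivity of $\Val(\bff)$ only lifts valuation diagrams whose generic point lies in $\calX=\calY$, whereas the valuative criterion for $f\:Y\to X$ a priori concerns arbitrary generic points in $Y$. One must therefore reduce the criterion to diagrams with generic point in the schematically dense image of $\psi_\bfY$, using schematic dominance (so that $\calX$ meets every generic point of $Y$) together with the factorization of valuative diagrams through semivaluations \cite[Lemma~4.2.2]{pruf}. This is bookkeeping rather than a single hard estimate, but it is where the argument genuinely lives.
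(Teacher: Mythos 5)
Your proof is correct and takes essentially the same route as the paper: parts (1)--(3) are handled identically (the paper simply cites \cite[Proposition~4.2.5(i)]{pruf} for (1), whose content is exactly your equalizer argument), and for (4) the paper likewise uses trivial semivaluations to force strictness and then applies the refined valuative criterion \cite[Theorem~4.2.9]{pruf} in both directions. The ``delicate point'' you flag at the end is already absorbed into the statement of that criterion, which tests properness of $f\:Y\to X$ only against adic semivaluations whose generic point lies in the schematically dense subspace $\calY=\calX$, so no further reduction is needed.
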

\begin{proof}
Assertion (1) follows from \cite[Proposition~5.2.6]{pruf} applied to the morphism $Y\to X$. Assertion (2) follows from (1) by the definition of $\Val$ (cf. \S~\ref{subsec:adicsemival}). (3) follows from Lemma~\ref{propcomplem} (4). It remains to prove (4).

Note that for any point $\eta\in\calX$ there exists a trivial semivaluation $\eta\to \bfX$, which is easily seen to be adic. Thus, if either $\Spa(\bff)$ or $\Val(\bff)$ is bijective then $\bff$ is strict. Hence we may assume that $\bff$ is strict.

If either (b) or (c) holds then $f$ is proper by the strong valuative criterion of properness \cite[Theorem~5.2.14]{pruf}, and hence (a) holds. Vice versa, if (a) holds then $\Spa(\bff)$ is injective by (1) and is surjective by \cite[Proposition~5.2.9]{pruf}, and hence (b) holds. Furthermore, $\Spa(\bff)$ takes $\Val(\bfY)$ to $\Val(\bfX)$ (cf. \S~\ref{subsec:adicsemival}), and hence (c) holds by (3).
\end{proof}

Thanks to Lemma~\ref{lem1}, for a pseudo-modification $\bff\:\bfY\to\bfX$, we will freely identify $\Spa(\bfY)$ with the subset of all semivaluations of $\bfX$ that lift to $\bfY$. If $\bff$ is a quasi-modification then we will use similar identification for the sets $\Val$.

\begin{defin}
Let $\bfv\:\bfT\to \bfX$ be a semivaluation. We say that a pseudo-modification $\bfY\to \bfX$ is {\em along} $\bfv$ (or, by abuse of language, along $\bfT$) if $\bfv\in\Spa(\bfY)$.
\end{defin}

\begin{lem}\label{alonglem} Let $\bfv\:\bfT\to\bfX$ be a semivaluation. Then the families of all pseudo-modifications and quasi-modifications of $\bfX$ along $\bfv$ are filtered.
\end{lem}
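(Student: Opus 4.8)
The plan is to imitate the proof of Lemma~\ref{modlem}(ii): I would show that the two families are closed under fiber products over $\bfX$, which together with nonemptiness yields that they are filtered. Nonemptiness is immediate, since the identity $\id_\bfX$ is a modification (hence a quasi-modification and a pseudo-modification) of $\bfX$, and it is trivially along $\bfv$. So the whole content is to check that, given two pseudo-modifications (resp.\ quasi-modifications) $\bff_1\:\bfY_1\to\bfX$ and $\bff_2\:\bfY_2\to\bfX$ along $\bfv$, their fiber product $\bfW:=\bfY_1\times_\bfX\bfY_2$ is again a pseudo-modification (resp.\ quasi-modification) along $\bfv$. That $\bfW\to\bfX$ is a pseudo-modification (resp.\ quasi-modification) is exactly Lemma~\ref{modlem}(i), so the only genuinely new point is that $\bfW\to\bfX$ is along $\bfv$, i.e.\ that $\bfv\in\Spa(\bfW)$.

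To lift $\bfv$ to $\bfW$, I would first record that the two lifts live over a common valuation model. By the identification following Lemma~\ref{lem1}, $\bfv\in\Spa(\bfY_i)$ means precisely that $\bfv$ factors as $\bfT\xrightarrow{\bfv_i}\bfY_i\xrightarrow{\bff_i}\bfX$ for a semivaluation $\bfv_i$ with the \emph{same} valuation model $\bfT$; here one uses that $\gtf_i$ is an open immersion, so that the Zariski point $\calT\to\calY_i$ composes with $\calY_i\into\calX$ to a Zariski point and no modification of the valuation model is forced in the factorization of \cite[Lemma~4.2.2]{pruf}. Thus $\bfv_1\:\bfT\to\bfY_1$ and $\bfv_2\:\bfT\to\bfY_2$ agree after composition to $\bfX$, and the universal property of the fiber product (Proposition~\ref{prop:basicpropofmodels}(3)) produces a morphism $\bfw\:\bfT\to\bfW$ with $\pr_i\circ\bfw=\bfv_i$.

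It remains to verify that $\bfw$ is a semivaluation, which I expect to be the only real step. Since $\bfT$ is unchanged, $\bfw$ is automatically a morphism from a valuation model. For the Zariski-point condition, note that the generic component $\gtw\:\calT\to\calW=\calY_1\times_\calX\calY_2$ composes with the projection $\calW\to\calY_1$ to the monomorphism $\gtv_1$; hence $\gtw$ is itself a monomorphism, and so $\calT\to\calW$ is a Zariski point. For separatedness of the model component, observe that $W$ is a closed subspace of $Y_1\times_XY_2$, so there is a natural map $W\to Y_1$ through which $w\:T\to W$ composes to $v_1\:T\to Y_1$; as $v_1$ is separated (being the model component of a valuative diagram), the cancellation property of separated morphisms forces $w$ to be separated. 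Therefore $\bfw\:\bfT\to\bfW$ is a semivaluation, and composing it with $\bfW\to\bfX$ recovers $\bfv$, so $\bfv\in\Spa(\bfW)$. Together with Lemma~\ref{modlem}(i) this shows that $\bfW\to\bfX$ is a pseudo-modification (resp.\ quasi-modification) along $\bfv$ dominating both $\bfY_1$ and $\bfY_2$, completing the verification that both families are filtered. The main obstacle is the bookkeeping around the valuation model in the second paragraph, ensuring that the two lifts share $\bfT$ so that the universal property applies; once that is in place, the semivaluation check is routine.
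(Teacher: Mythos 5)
Your proposal is correct and matches the paper's approach: the paper's entire proof is ``Follows from Lemma~\ref{modlem}'', i.e.\ it reduces to stability of pseudo-/quasi-modifications under fiber products and treats the lifting of $\bfv$ to $\bfY_1\times_\bfX\bfY_2$ as immediate. You have simply written out that lifting step (common valuation model $\bfT$ because the $\gtf_i$ are open immersions, universal property of the fiber product of models, and the routine check that the resulting $\bfT\to\bfW$ is again a semivaluation), all of which is sound.
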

\begin{proof}
Follows from Lemma~\ref{modlem}.
\end{proof}

\begin{lem}\label{qmodrem}
Assume $\bfX$ is a model with a semivaluation $\bfv$, and let $\bfu:=r_\bfX(\bfv)$ be the associated adic semivaluation. Then,

(1) Any pseudo-modification $\bfg\:\bfZ\to\bfX$ along $\bfu$ is also along $\bfv$.

(2) A quasi-modification $\bff\:\bfY\to\bfX$ is along $\bfu$ if and only if it is along $\bfv$.
\end{lem}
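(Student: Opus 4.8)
The plan is to prove (i) directly and to deduce (ii) from (i) together with the naturality of the retraction for adic morphisms. Throughout I would use the explicit description of the retraction (\S\ref{retrsec}): $W\into T$ is the maximal pro-open subspace to which the $X$-morphism $\calT\to\calX$ extends, $w$ is its unique closed point, $\calS=w$, and $S=\overline{\{w\}}$, so that $\bfu=(\calS\to S)\to\bfX$ has model component $v|_S$. Since the generic component $\gtg\:\calZ\to\calX$ of a pseudo-modification is an open immersion, lifting a semivaluation to $\bfZ$ amounts to lifting the generic component into the open $\calZ$ and lifting the model component through $g\:Z\to X$. Thus ``$\bfg$ along $\bfu$'' provides a point $w\to\calZ$ over $\calX$ together with a morphism $S\to Z$ over $X$ lifting $v|_S$, and the task is to produce the analogous data for $\bfv$.

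First I would settle the generic components. The extension $W\to\calX$ sends the generic point $\calT$ to a generalization of the image of $w$, because $w$ lies in the closure of $\calT$ in $W$. As the latter image lies in the open subspace $\calZ$, and open subspaces are stable under generalization, the image of $\calT$ also lies in $\calZ$; hence $\calT\to\calX$ factors through $\calZ$. The same reasoning upgrades this to all of $W$: the preimage of $\calZ$ under $W\to\calX$ is open and contains the unique closed point $w$ of $W$, so it equals $W$, and $W\to\calX$ factors through a morphism $W\to\calZ$. Composing with $\psi_\bfZ$ gives $W\to Z$ over $X$ lifting $v|_W$.

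The main step is then to glue the two partial model lifts. We have $T=W\cup S$ with $W\cap S=\{w\}$, the morphisms $W\to Z$ and $S\to Z$ both lift $v$, and they agree at $w$: both restrict there to the unique lift of the image of $w$ through the monomorphism $\calZ\to\calX$, post-composed with $\psi_\bfZ$. Gluing them into a single morphism $T\to Z$ over $X$ is where the local structure of valuation spaces enters: working \'etale-locally on $Z$ and affine-locally, one uses the decomposition $R=R_\gtq\times_{\kappa(\gtq)}(R/\gtq)$ of a valuation ring $R$ along the prime $\gtq$ corresponding to $w$, which turns a compatible pair of maps out of $\Spec R_\gtq$ and $\Spec R/\gtq$ into a map out of $\Spec R$, while separatedness of $g$ makes the local constructions unique and hence glueable. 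Combined with the generic lift $\calT\to\calZ$, the resulting $T\to Z$ is a lift of $\bfv$ to $\bfZ$, proving (i). I expect this gluing — reconstructing a morphism from a valuation space out of its localization and its quotient — to be the only genuinely technical part, the subtlety being that $W$ is merely pro-open in $T$, so the decomposition is not an open--closed one and the fiber-product presentation of $R$ plus \'etale descent for the algebraic space $Z$ are really needed.

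Finally, (ii) follows quickly. If $\bff\:\bfY\to\bfX$ is a quasi-modification along $\bfu$, then, being a pseudo-modification, it is along $\bfv$ by (i). Conversely, assume $\bff$ is along $\bfv$, and let $\bfv'\in\Spa(\bfY)$ be the lift of $\bfv$, unique by Lemma~\ref{lem1}. Since $\bff$ is adic, the retraction is natural (\S\ref{fullsec}), so
$$
\bfu=r_\bfX(\bfv)=r_\bfX(\Spa(\bff)(\bfv'))=\Val(\bff)(r_\bfY(\bfv')).
$$
Thus $\bfu$ is the image under $\Val(\bff)$ of the adic semivaluation $r_\bfY(\bfv')\in\Val(\bfY)$; that is, $\bfu$ lifts to $\bfY$, so $\bff$ is along $\bfu$.
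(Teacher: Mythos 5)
Your argument is correct and follows the paper's proof essentially step for step: for (i) you factor $W\to\calX$ through the open $\calZ$ using that $w$ is the closed point of $W$ and then glue $W\to Z$ with $S\to Z$ along $w$, and for (ii) you use adicity of $\bff$ to identify $r_\bfX(\bfv)$ with $r_\bfY(\bfv)$ via the compatibility of the retractions. The only divergence is that the gluing step you flag as ``the only genuinely technical part'' --- reconstructing $T\to Z$ from its restrictions to $W$ and $S$ via the decomposition $R=R_{\gtq}\times_{\kappa(\gtq)}(R/\gtq)$ --- is precisely the statement that $T=S\coprod_{\calS}W$ is a Ferrand pushout in the category of algebraic spaces, which the paper simply cites from \cite[Proposition~3.3.11]{pruf} rather than re-deriving.
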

\begin{proof}
Let $\bfT\to\bfX$ and $\bfS\to\bfX$ denote the morphisms of $\bfv$ and $\bfu$, respectively. Recall, that $\calS\in X$ is the closed point of the maximal pro-open subspace $W$ for which the $X$-morphism $\calT\to \calX$ extends to $W$, and $S$ is the closure of $\calS$ in $T$. Since $\calZ\subseteq \calX$ is open and $\calS\in\calZ$, the morphism $W\to\calX$ factors through $\calZ$. By \cite[Proposition~4.3.11]{pruf}, $T=S\coprod_{\calS}W$ is the Ferrand pushout. Thus, the compatible morphisms $W\to \calY\to Z$ and $S\to Z$ define a morphism $T\to Z$, and hence $\bfv\in\Spa(\bfZ)$. This proves (1).

To prove (2) we assume that $\bff$ is along $\bfv$. Identify $\Spa(\bfY)$ with a subset of $\Spa(\bfX)$. Since $\bff$ is adic, we have that $\bfu=\bfr_\bfX(\bfv)=\bfr_\bfY(\bfv)$, so $\bfu\in\Spa(\bfY)$.
\end{proof}

\subsubsection{Maps to $\RZ(\bfX)$}\label{mapstorzsec}
By Lemma~\ref{lem1} (4), for any modification $\bfX_\alp\to\bfX$ we can identify $\Spa(\bfX_\alp)$ with $\Spa(\bfX)$. Therefore, the reduction maps $\opi_{\bfX_\alp}$ induce a map $\Red_\bfX\:\Spa(\bfX)\to\RZ(\bfX)$, and in the same manner one obtains a map $\red_\bfX\:\Val(\bfX)\to\RZ(\bfX)$. Since the center maps are compatible with the retraction $r_\bfX$, we obtain the following diagram, in which both triangles are commutative.
\begin{equation}\label{diag:red}
\xymatrix{
\Spa(\bfX)\ar@<1ex>[d]^{r_\bfX}\ar[dr]^{\Red_\bfX} & \\
\Val(\bfX)\ar@{^{(}->}@<1ex>[u]^{\iota_\bfX}\ar[r]^{\red_\bfX} & \RZ(\bfX)\\
}
\end{equation}

Plainly, such diagrams are functorial with respect to separated morphisms of models.

\subsubsection{Topologies on $\Spa(\bfX)$}
If $\bfY$ and $\bfZ$ are two pseudo-modifications of $\bfX$ then $\Spa(\bfY)\cap\Spa(\bfZ)=\Spa(\bfY\times_\bfX\bfZ)$. Therefore, the collection of sets $\Spa(\bfY)$ for all pseudo-modifications $\bfY\to\bfX$, forms a base of a topology on $\Spa(\bfX)$, which we call {\em Zariski topology}. Since pseudo-modifications are preserved by base changes, the maps $\Spa(\bff)$ are continuous. This, enriches $\Spa$ to a functor whose target is the category of topological spaces.

In addition, we consider the boolean algebra generated by the sets $\Spa(\bfY)$ for pseudo-modifications $\bfY\to\bfX$, and call its elements {\em constructible} subsets of $\Spa(\bfX)$. They form a base of a topology, which we call {\em constructible topology}. Clearly, the maps $\Spa(\bff)$ are continuous with respect to the constructible topologies as well.

\begin{rem}
We will show in Proposition~\ref{compprop} that the sets $\Spa(\bfY)$ are quasi-compact, and so our ad hoc definition of the constructible topology coincides with what one usually takes for the definition, i.e., the topology associated to the boolean algebra generated by open quasi-compact sets. However, we will use both topologies in the proof, so it is convenient to start with the ad hoc definition.
\end{rem}


\subsubsection{The topology on $\Val(\bfX)$}
We define {\em Zariski topology} on $\Val(\bfX)$ to be the induced topology from the Zariski topology on $\Spa(\bfX)$, i.e., the sets $\Spa(\bfY)\cap\Val(\bfX)$ for pseudo-modifications $\bfY\to\bfX$ form a base of the Zariski topology. Note that by Lemma~\ref{lem1} (3), $\Spa(\bfY)\cap\Val(\bfX)=\Val(\bfY)\cap\Val(\bfX)$.

\begin{rem}
(i) The functoriality of (the enriched) $\Val$ with respect to adic morphisms is obvious, while the question about general functoriality is much more subtle. In fact, our main results about the topology of $\Val$ are the following: (1) $\red_\bfX\:\Val(\bfX)\to\RZ(\bfX)$ is a homeomorphism (Theorem~\ref{rzth}), and (2) the retraction map $r_\bfX$ is open and continuous (Corollary~\ref{rzcor}). Since the construction of $\RZ(\bfX)$ is functorial in a natural way, we will obtain an interpretation of the functoriality of $\Val$ that does not involve the retractions $r_\bfX$. Moreover, this will extend the functoriality of $\Val$ to all morphisms, and will imply the continuity of the maps $\Val(\bff)$ for an arbitrary morphism $\bff\:\bfY\to\bfX$.

(ii) One could define {\em quasi-modification topology} on $\Val(\bfX)$ by using only quasi-modifications in the definition. It is {\em a priori} weaker than the Zariski topology, but {\em a posteriori} the two topologies coincide (Theorem~\ref{topvalth}).

(iii) As one might expect, the quasi-modification topology is not so natural on $\Spa(\bfX)$. In fact, it does not distinguish points in the fibers of $\bfr_\bfX$ by Lemma~\ref{qmodrem}, while the pseudo-modification topology does so by Theorem~\ref{thm:ValT0}.
\end{rem}

\subsection{Topological properties}
In this section we establish relatively simple topological properties, whose proof does not involve a deep study of quasi-modifications.

\begin{prop}\label{prop:cont}
The maps $\iota_\bfX$, $\pi_\bfX$, $\opi_\bfX$, $\red_\bfX$, and $\Red_\bfX$ are continuous in Zariski topology.
\end{prop}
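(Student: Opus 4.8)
The plan is to reduce the continuity of all five maps to that of the two center-type maps $\opi_\bfX$ and $\Red_\bfX$, and then to handle those directly. Since $\Val(\bfX)$ carries the topology induced from $\Spa(\bfX)$, the inclusion $\iota_\bfX$ is continuous by the very definition of the subspace topology. The remaining two maps out of $\Val(\bfX)$ are restrictions of the corresponding maps out of $\Spa(\bfX)$: one has $\pi_\bfX=\opi_\bfX\circ\iota_\bfX$ (the center of an adic semivaluation is the same whether read off in $\Spa$ or in $\Val$), and $\red_\bfX=\Red_\bfX\circ\iota_\bfX$ by the commutativity of \eqref{diag:red}. Thus once $\opi_\bfX$ and $\Red_\bfX$ are shown to be continuous, so are $\pi_\bfX$ and $\red_\bfX$.

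The main step is the continuity of $\opi_\bfX\colon\Spa(\bfX)\to X$. As $X$ is qcqs, its topology has a base of quasi-compact open subspaces, so it is enough to check that $\opi_\bfX^{-1}(V)$ is open for every quasi-compact open $V\subseteq X$. To such a $V$ I would attach the open submodel $\bfX_V\into\bfX$ with model space $V$ and generic space $\calX_V:=\psi_\bfX^{-1}(V)$. Here $\calX_V$ is qcqs: the morphism $\psi_\bfX$ is quasi-compact, being a morphism from the quasi-compact space $\calX$ to the quasi-separated space $X$, and the restriction $\calX_V\to V$ is again schematically dominant since schematic dominance is local on the target; hence $\bfX_V$ is a genuine model and $\bfX_V\to\bfX$ is a pseudo-modification, so that $\Spa(\bfX_V)$ is a basic open of $\Spa(\bfX)$. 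I would then prove the identity $\opi_\bfX^{-1}(V)=\Spa(\bfX_V)$. The inclusion $\supseteq$ is clear, since a semivaluation lifting to $\bfX_V$ has its model component factoring through $V$, whence its center lies in $V$. For $\subseteq$, let $\bfv\colon\bfT\to\bfX$ be a semivaluation with center in $V$; the key input is that the underlying space of the valuation space $T$ has a unique closed point to which every point specializes, so any open of $T$ containing the closed point is all of $T$. Applying this to $v^{-1}(V)$ gives $v(T)\subseteq V$, so $v$ factors through $V$; and since $\psi_\bfX\circ\gtv$ then sends the generic point of $T$ into $V$, the generic component $\gtv$ factors through the open $\calX_V$. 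Hence $\bfv$ lifts to $\bfX_V$, establishing $\subseteq$.

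For $\Red_\bfX\colon\Spa(\bfX)\to\RZ(\bfX)=\varprojlim_\alp|X_\alp|$ I would invoke the universal property of the inverse limit: it suffices that each composite $p_\alp\circ\Red_\bfX\colon\Spa(\bfX)\to|X_\alp|$ with a projection be continuous. By construction this composite is the center map $\opi_{\bfX_\alp}$ transported through the identification $\Spa(\bfX_\alp)=\Spa(\bfX)$ furnished by Lemma~\ref{lem1}(4) for the modification $\bff_\alp\colon\bfX_\alp\to\bfX$. Applying the previous step to $\bfX_\alp$, the preimage of a quasi-compact open $V_\alp\subseteq X_\alp$ is $\Spa((\bfX_\alp)_{V_\alp})$; and since $(\bfX_\alp)_{V_\alp}\to\bfX_\alp\to\bfX$ is again a pseudo-modification (pseudo-modifications being stable under composition by Lemma~\ref{modlem}), this preimage is a basic open of $\Spa(\bfX)$ — here one uses that the identification of Lemma~\ref{lem1}(4) carries $\Spa((\bfX_\alp)_{V_\alp}\to\bfX_\alp)$ onto $\Spa((\bfX_\alp)_{V_\alp}\to\bfX)$, which is immediate from the uniqueness of liftings along a modification. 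Thus each $p_\alp\circ\Red_\bfX$ is continuous, and so is $\Red_\bfX$.

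The routine parts are the reduction in the first paragraph and the inverse-limit bookkeeping in the third. The real content, and the step I expect to cost the most, is the identity $\opi_\bfX^{-1}(V)=\Spa(\bfX_V)$, and within it the verification that the generic component of a semivaluation centered in $V$ automatically factors through $\calX_V=\psi_\bfX^{-1}(V)$; this is precisely where one must combine the fact that $\bfX_V$ is the \emph{full} open submodel over $V$ with the valuation-theoretic observation that an open containing the closed point of $T$ exhausts $T$.
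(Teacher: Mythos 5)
Your proof is correct and follows essentially the same route as the paper: continuity of $\iota_\bfX$ from the subspace topology, the identification of $\opi_\bfX^{-1}$ of a (quasi-compact) open $V\subseteq X$ with $\Spa$ of the open submodel $\bfX\times_XV$, and the limit description of $\RZ(\bfX)$ to deduce continuity of $\Red_\bfX$ from that of the $\opi_{\bfX_\alp}$. Your write-up merely supplies details the paper leaves implicit (restricting to quasi-compact opens so that $\bfX_V\to\bfX$ is genuinely a pseudo-modification, and the specialization argument on $T$ for the inclusion $\opi_\bfX^{-1}(V)\subseteq\Spa(\bfX_V)$), and handles $\pi_\bfX$, $\red_\bfX$ by composing with $\iota_\bfX$ rather than repeating the argument with $\Val$ in place of $\Spa$; these are cosmetic differences.
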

\begin{proof}
The continuity of $\iota_\bfX$ follows immediately from the definitions. For the continuity of $\pi_\bfX$ and $\opi_\bfX$, let $U\subset X$ be an open subset. Set $\bfU:=\bfX\times_XU$. Then $\opi_\bfX^{-1}(U)=\Spa(\bfU)$ and $\pi_\bfX^{-1}(U)=\Val(\bfU)$ are open subsets. Finally, since $\RZ(\bfX)$ is the limit of $|X_\alp|$, where $\bff_\alp{\colon}\bfX_\alp\to \bfX$, $\alp\in A$, is the family of all modifications of $\bfX$, the continuity of $\red_\bfX$ and $\Red_\bfX$ follows from the continuity of $\pi_{\bfX_\alp}$ and $\opi_{\bfX_\alp}$.
\end{proof}

We conclude Section~\ref{relbirsec} by showing that $\Val(\bfX)$ and $\Spa(\bfX)$ are quasi-compact. For the proof we need the following observation.

\begin{lem}\label{lem1.5}
For any $\bfv\in \Spa(\bfX)$, the retraction $r_\bfX(\bfv)\in\Val(\bfX)\subseteq\Spa(\bfX)$ is a specialization of $\bfv$ in Zariski topology.
\end{lem}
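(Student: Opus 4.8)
The plan is to reduce the statement directly to Lemma~\ref{qmodrem}(i). First I would unwind what specialization means in terms of the base of the Zariski topology: a point $w\in\Spa(\bfX)$ is a specialization of $\bfv$ precisely when $w$ lies in the closure $\overline{\{\bfv\}}$, which is equivalent to the assertion that every basic open neighborhood of $w$ contains $\bfv$. Since the sets $\Spa(\bfY)$ for pseudo-modifications $\bfY\to\bfX$ form a base of the Zariski topology on $\Spa(\bfX)$, it therefore suffices to verify the following implication: for every pseudo-modification $\bfY\to\bfX$ with $r_\bfX(\bfv)\in\Spa(\bfY)$, one also has $\bfv\in\Spa(\bfY)$.

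Next I would translate these two membership conditions into the ``along'' terminology by means of the identification furnished by Lemma~\ref{lem1}. For a pseudo-modification, the condition $\bfv\in\Spa(\bfY)$ holds exactly when $\bfY$ is along $\bfv$, and likewise $r_\bfX(\bfv)\in\Spa(\bfY)$ holds exactly when $\bfY$ is along $\bfu:=r_\bfX(\bfv)$. Under this dictionary, the implication displayed above becomes: any pseudo-modification along $\bfu$ is also along $\bfv$. But this is precisely the content of Lemma~\ref{qmodrem}(i). Applying it to each such $\bfY$ gives $r_\bfX(\bfv)\in\overline{\{\bfv\}}$, which is exactly the desired specialization statement.

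There is no genuine obstacle here, since all the geometric content has already been isolated. The essential point---that the semivaluation $\bfT$ realizing $\bfv$ decomposes as the Ferrand pushout $T=S\coprod_{\calS}W$ of its special and generic pieces, so that a morphism to a pseudo-modification defined over the adic retraction $\bfu$ automatically extends over $\bfv$---is exactly what is proved in Lemma~\ref{qmodrem}. The present lemma is merely the topological repackaging of that fact as a specialization relation, obtained by matching the base $\{\Spa(\bfY)\}$ of the Zariski topology against the ``along'' relation on pseudo-modifications.
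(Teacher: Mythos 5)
Your proof is correct and is exactly the paper's argument: the paper proves this lemma by simply citing Lemma~\ref{qmodrem}(i), and your unwinding of specialization in terms of the basic opens $\Spa(\bfY)$ and the ``along'' dictionary is precisely the reduction that citation implicitly relies on.
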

\begin{proof}
Follows from Lemma~\ref{qmodrem}(1).
\end{proof}

\begin{prop}\label{compprop}
Let $\bfX$ be a model, and $Z\subseteq |X|$ a pro-constructible subset. Then the sets $\opi_\bfX^{-1}(Z)\subseteq \Spa(\bfX)$ and $\pi_\bfX^{-1}(Z)\subseteq \Val(\bfX)$ are quasi-compact in the Zariski topology. In particular, $\Spa(\bfX)$ and $\Val(\bfX)$ are quasi-compact. Furthermore, $\opi_\bfX^{-1}(Z)\subseteq \Spa(\bfX)$ is compact in the constructible topology.
\end{prop}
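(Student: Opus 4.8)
The plan is to prove everything in the constructible topology first, where quasi-compactness is available, and then transfer the conclusions to the Zariski topology by soft arguments; the only substantial point is the quasi-compactness of $\Spa(\bfX)$ itself in the constructible topology.

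\emph{The core: $\Spa(\bfX)$ is quasi-compact in the constructible topology.} Since the constructible sets are clopen and form a base of this topology, it suffices to verify the finite intersection property, i.e. that any ultrafilter on the boolean algebra generated by the sets $\Spa(\bfY)$, for pseudo-modifications $\bff\:\bfY\to\bfX$, is the collection of constructible neighbourhoods of a single semivaluation $\bfv\in\Spa(\bfX)$. Because $\opi_\bfX$ commutes with $\Spa(\bff)$ for separated $\bff$, I would first reduce to the affinoid schematic case $\bfX=(\Spec B\to\Spec A)$, using an \'etale presentation of $\bfX$ together with Rydh's approximation \cite{rydapr}. In that case, by Remark~\ref{rem:spa}(ii), semivaluations of $\bfX$ are the $X$-valuations of $\calX$ of \cite[\S3.1]{temrz}, and $\Spa(\bfX)$ is a pro-constructible subset of the valuation spectrum of $B$; its quasi-compactness is then the spectrality of valuation spectra, or, equivalently, the explicit construction of the limit valuation ring attached to the ultrafilter. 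This construction of the limiting semivaluation, and the reduction of the algebraic-space case to the schematic one, is the main obstacle; everything below is formal.

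\emph{Passing to $\opi_\bfX^{-1}(Z)$.} For a quasi-compact open $U\subseteq X$ one has $\opi_\bfX^{-1}(U)=\Spa(\bfX\times_XU)$, exactly as in the proof of Proposition~\ref{prop:cont}. Hence the $\opi_\bfX$-preimage of any constructible subset of $|X|$ is a constructible, in particular clopen, subset of $\Spa(\bfX)$. A pro-constructible $Z$ is an intersection of constructible subsets, so $\opi_\bfX^{-1}(Z)$ is an intersection of clopen sets, hence closed in the constructible topology. Being a closed subset of the quasi-compact space $\Spa(\bfX)$, it is quasi-compact in the constructible topology (indeed compact, once the $T_0$ property is known, since the topology has a clopen base); this yields the last assertion of the proposition.

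\emph{Transfer to the Zariski topology.} The base sets $\Spa(\bfY)$ are Zariski-open as well as constructible, so the constructible topology refines the Zariski one and the identity map from the constructible space to the Zariski space is continuous. Therefore $\opi_\bfX^{-1}(Z)$ is quasi-compact in the Zariski topology. For $\pi_\bfX^{-1}(Z)=\opi_\bfX^{-1}(Z)\cap\Val(\bfX)$ I would avoid the continuity of $r_\bfX$, which is not yet available, and argue by specialization instead. Put $C=\opi_\bfX^{-1}(Z)$ and $V=\pi_\bfX^{-1}(Z)$; since $r_\bfX$ fixes $\Val(\bfX)$ and $\opi_\bfX=\pi_\bfX\circ r_\bfX$, one checks $V\subseteq C$ and $r_\bfX(C)\subseteq V$. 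Given a cover of $V$ by Zariski-open subsets $W_i$ of $\Spa(\bfX)$, for every $\bfv\in C$ the point $r_\bfX(\bfv)$ lies in $V$, hence in some $W_i$; as $r_\bfX(\bfv)$ is a specialization of $\bfv$ by Lemma~\ref{lem1.5} and $W_i$ is open, we conclude $\bfv\in W_i$. Thus $\{W_i\}$ already covers $C$, and quasi-compactness of $C$ produces a finite subcover, which covers $V$ as well. Hence $V$ is quasi-compact. Finally, taking $Z=|X|$ specializes all of the above to the quasi-compactness of $\Spa(\bfX)$ and of $\Val(\bfX)$.
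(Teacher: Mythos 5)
Your formal scaffolding is all correct and matches the paper: the reduction of $\pi_\bfX^{-1}(Z)$ to $\opi_\bfX^{-1}(Z)$ via the specialization $r_\bfX(\bfv)\leq\bfv$ of Lemma~\ref{lem1.5}, the observation that $\opi_\bfX^{-1}(Z)$ is an intersection of clopen sets in the constructible topology and hence closed in a (quasi-)compact space, and the transfer from the constructible to the coarser Zariski topology are exactly the soft steps the paper performs. The problem is that the one step you yourself flag as ``the main obstacle'' --- reducing quasi-compactness of $\Spa(\bfX)$ for a general model of algebraic spaces to the affinoid schematic case --- is precisely the mathematical content of the proposition, and you do not supply it. Saying that $\opi_\bfX$ commutes with $\Spa(\bff)$ for separated $\bff$, or invoking Rydh's approximation, does not produce the reduction: what is needed is an affinoid model $\bfU$ together with a map $\phi\:\Spa(\bfU)\to\Spa(\bfX)$ that is \emph{surjective}, so that compactness can be pushed forward. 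Approximation is not the relevant tool here at all; the issue is descent of semivaluations along \'etale surjections, not finite presentation.

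Concretely, the paper takes an \'etale affine covering $V\to X$, then an \'etale affine covering $\calU\to\calV$ of the generic space of $\bfV=\bfX\times_XV$, obtaining an affinoid $\bfU\to\bfV\to\bfX$ with \'etale surjective components, and then proves surjectivity of $\phi\:\Spa(\bfU)\to\Spa(\bfX)$ by an explicit lifting: given $\bfv\:\bfT\to\bfX$, the base change $S=T\times_XV$ is an SLP space \'etale over $T$, hence a scheme by \cite[Proposition~4.1.9(i)]{pruf}; localizing at a preimage $s$ of the closed point of $T$ gives a semivaluation $\bfS_s\to\bfV$ mapping to $\bfv$ (using \cite[Proposition~4.2.2(ii)]{pruf} to see $S_s\to T$ is an isomorphism), and one then lifts the generic point to $\calU$ and extends the valuation ring of $\calO_{S,s}$ to a valuation ring of the residue field of the chosen preimage in $\calU$. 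Without this (or an equivalent) argument your proof establishes the proposition only for affinoid models, where it is indeed Huber's theorem. You should either carry out this lifting or cite a statement that provides the surjectivity; as written, the core of the claim is asserted rather than proved.
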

\begin{proof}
First, we claim that if $\opi_\bfX^{-1}(Z)\subseteq \Spa(\bfX)$ is quasi-compact in Zariski topology then so is $\pi_\bfX^{-1}(Z)\subseteq \Val(\bfX)$. Indeed, any open covering $\gtW$ of $\pi_\bfX^{-1}(Z)$ in $\Val(\bfX)$ is the restriction of an open covering $\gtU$ of $\pi_\bfX^{-1}(Z)$ in $\Spa(\bfX)$. By Lemma~\ref{lem1.5}, for any $\bfv\in \opi_\bfX^{-1}(Z)$ the retraction $r_\bfX(\bfv)$ is a specialization of $\bfv$. Note that $r_\bfX(\bfv)\in\pi_\bfX^{-1}(Z)$ since $\opi_\bfX=\pi_\bfX\circ r_\bfX$. Thus, $\gtU$ is also an open covering of $\opi_\bfX^{-1}(Z)$ in $\Spa(\bfX)$. If $\opi_\bfX^{-1}(Z)$ is quasi-compact then its covering $\gtU$ admits a finite refinement, and hence $\gtW$ admits a finite refinement too.

Assume, first, that $\bfX$ is affinoid. This case is essentially due to Huber (see \cite{Hub1} and \cite{temrz} for details). If $X=\Spec(A)$ and $\calX=\Spec(B)$ then, as explained in \cite[\S3.1]{temrz}, the topological space $\Spa(\bfX)$ is nothing but Huber's adic space $\Spa(B,A)$, where $A$ and $B$ are viewed as discrete topological rings. In \cite{Hub1}, it is proved that $\Spa(B,A)$ is compact in the constructible topology. Since $\opi_\bfX$ is continuous, the set $\opi^{-1}_\bfX(Z)$ is pro-constructible. Thus, it is an intersection of constructible sets, which are closed, and hence compact in the constructible topology. We conclude that $\opi^{-1}_\bfX(Z)$ is compact.

For an arbitrary $\bfX$, pick an \'etale affine covering $f\:V\to X$, and set $\bfV:=\bfX\times_XV$. Now, pick an \'etale affine covering
$\calU\to\calV$, and set $U:=V$ and $W:=f^{-1}(Z)$. We obtain an affinoid model $\bfU$ and morphisms $\bfU\to\bfV\to\bfX$, whose components are \'etale and surjective. Consider the induced map $\phi\:\Spa(\bfU)\to\Spa(\bfX)$. Note that $\phi^{-1}(\opi_\bfX^{-1}(Z))=\opi_\bfU^{-1}(W)$ is compact by the already established affinoid case, hence we should only prove that $\phi$ is surjective. 

Let $\bfv\:\bfT\to\bfX$ be a semivaluation. Set $\bfS:=\bfT\times_\bfX\bfV=\bfT\times_XV$, and note that $S$ is an SLP space because it is \'etale over $T$. Applying \cite[Proposition~5.1.9(i)]{pruf} to $S\to V$ we obtain that $S$ is a scheme. Let $s\in S$ be a preimage of the closed point of $T$ and $S_s=\Spec(\calO_{S,s})$. Set $\bfS_s:=\bfS\times_SS_s$. Then $\bfw\:\bfS_s\to\bfV$ is a semivaluation, whose image in $\Spa(\bfX)$ is $\bfv$. Indeed, $\Spa(\bfv)\to\bfT$ is a surjective separated morphism of valuative spaces inducing an isomorphism of the generic points. Hence, by \cite[Lemma~5.2.3(ii)]{pruf}, it is an isomorphism. Let now $\eta$ be the generic point of $S_s$. We identify it with a point of $\calV$, and choose any preimage $\veps\in\calU$. Choose any valuation ring $A$ of $k(\veps)$ whose intersection with $k(\eta)$ coincides with $\calO_{S,s}$. Then $\veps\to\Spec(A)$ defines a valuation model and the morphisms $\veps\to\calU$ and $\Spec(A)\to S_s\to U$ define a semivaluation of $\bfU$ that lifts $\bfw$.
\end{proof}

\begin{cor}\label{cor:surjofred}
The maps $\Red_\bfX$ and $\red_\bfX$ are surjective.
\end{cor}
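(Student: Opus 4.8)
The plan is to prove surjectivity of $\Red_\bfX$ first and then read off surjectivity of $\red_\bfX$ for free. Since both triangles in diagram~\eqref{diag:red} commute, we have $\Red_\bfX=\red_\bfX\circ r_\bfX$; as $r_\bfX$ is defined on all of $\Spa(\bfX)$, surjectivity of $\Red_\bfX$ immediately forces surjectivity of $\red_\bfX$. So everything reduces to the single map $\Red_\bfX$.

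Fix a point $\xi=(x_\alp)_{\alp\in A}\in\RZ(\bfX)=\underleftarrow{\lim}_{\alp\in A}|X_\alp|$, the limit being taken over the filtered family of modifications $\bff_\alp\colon\bfX_\alp\to\bfX$. Using the identifications $\Spa(\bfX_\alp)=\Spa(\bfX)$ furnished by Lemma~\ref{lem1}(4), the $\alp$-component of $\Red_\bfX$ is exactly the center map $\opi_{\bfX_\alp}$, and therefore
$$
\Red_\bfX^{-1}(\xi)=\bigcap_{\alp\in A}\opi_{\bfX_\alp}^{-1}(x_\alp).
$$
I would work throughout in the constructible topology on $\Spa(\bfX)$, which is compact by Proposition~\ref{compprop} (take $Z=|X|$). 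Since $|X_\alp|$ is a spectral space, each point $x_\alp$ is closed in its constructible topology, hence pro-constructible; by Proposition~\ref{compprop} again, each set $\opi_{\bfX_\alp}^{-1}(x_\alp)$ is then compact, and in particular closed, in the constructible topology of $\Spa(\bfX)$. It thus suffices to verify the finite intersection property for this family of closed sets.

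For the finite intersection property, take indices $\alp_1\.\alp_n\in A$. Because the family of modifications of $\bfX$ is filtered (Lemma~\ref{modlem}), there is a modification $\bfX_\beta\to\bfX$ dominating all of them, with model components $g_i\colon X_\beta\to X_{\alp_i}$; compatibility of $\xi$ gives $g_i(x_\beta)=x_{\alp_i}$. Since the reduction maps commute with $\Spa(\bff)$ for separated morphisms, and $\bfX_\beta\to\bfX_{\alp_i}$ is itself a (separated) modification, under our identifications we get $\opi_{\bfX_{\alp_i}}=g_i\circ\opi_{\bfX_\beta}$, whence $\opi_{\bfX_\beta}^{-1}(x_\beta)\subseteq\opi_{\bfX_{\alp_i}}^{-1}(x_{\alp_i})$ for every $i$. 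The set $\opi_{\bfX_\beta}^{-1}(x_\beta)$ is non-empty by the surjectivity of the center map (Proposition~\ref{prop:surjofthered}), so the finite intersection $\bigcap_{i}\opi_{\bfX_{\alp_i}}^{-1}(x_{\alp_i})$ is non-empty. By compactness of $\Spa(\bfX)$ in the constructible topology, the full intersection $\Red_\bfX^{-1}(\xi)$ is non-empty, and any semivaluation lying in it maps to $\xi$.

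The main obstacle is the finite intersection step: one must pass to a common dominating modification and use that its center fiber $\opi_{\bfX_\beta}^{-1}(x_\beta)$ sits inside each $\opi_{\bfX_{\alp_i}}^{-1}(x_{\alp_i})$. This is precisely where filteredness of modifications together with the functoriality of the center map (its commutation with $\Spa(\bff)$) are indispensable. The compactness input and the remark that points of a spectral space are pro-constructible are comparatively routine bookkeeping.
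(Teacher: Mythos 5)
Your proposal is correct and follows essentially the same route as the paper: reduce to $\Red_\bfX$ via the commutative diagram~\eqref{diag:red}, express the fiber as $\bigcap_\alp\opi_{\bfX_\alp}^{-1}(x_\alp)$, show each term is compact in the constructible topology via Proposition~\ref{compprop} and non-empty via Proposition~\ref{prop:surjofthered}, and obtain the finite intersection property from filteredness of the family of modifications (Lemma~\ref{modlem}). The only cosmetic difference is that you justify compactness of the fibers by taking $Z=\{x_\alp\}$ pro-constructible in Proposition~\ref{compprop}, which is if anything slightly cleaner than the paper's phrasing.
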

\begin{proof}
It is sufficient to show that $\Red_\bfX$ is surjective. Pick a point $x\in\RZ(\bfX)$, and consider the corresponding compatible family of points $x_\alp\in|X_\alp|$, where $\bfX_\alp$ are the modifications of $\bfX$. The subsets $S_\alp:=\opi_{\bfX_\alp}^{-1}(x_\alp)\subseteq\Spa(\bfX_\alp)=\Spa(\bfX)$ are compact in the constructible topology since $\opi_{\bfX_\alp}$ are continuous maps between compact spaces and $x_\alp\in |X_\alp|$ are compact. Furthermore, $S_\alp\ne\emptyset$ for each $\alp$ by Proposition~\ref{prop:surjofthered}. Finally, since the family of modifications of $\bfX$ is filtered by Lemma~\ref{modlem} (2), for any finite set of indices $\alp_1,\dotsc,\alp_n$, there exists a modification $\bfX_\alp\to\bfX$ that factors through all $\bfX_{\alp_i}$. Thus, $S_\alp\subseteq\cap_{i=1}^nS_{\alp_i}$. Hence $\cap_{i=1}^nS_{\alp_i}\ne\emptyset$, and by compactness, $Red^{-1}_\bfX(x)=\cap_\alp S_\alp\ne\emptyset$.
\end{proof}

\begin{theor}\label{thm:ValT0}
For any model $\bfX$ the spaces $\Spa(\bfX)$ and $\Val(\bfX)$ are $T_0$.
\end{theor}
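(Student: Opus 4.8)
The plan is to prove the assertion for $\Spa(\bfX)$ and deduce it for $\Val(\bfX)$ at no cost: since $\Val(\bfX)$ carries the topology induced from $\Spa(\bfX)$, and every subspace of a $T_0$ space is $T_0$, it suffices to show that $\Spa(\bfX)$ is $T_0$. Recall that the sets $\Spa(\bfY)$, for pseudo-modifications $\bff\:\bfY\to\bfX$, form a base of the Zariski topology on $\Spa(\bfX)$ (they are stable under finite intersection because $\Spa(\bfY)\cap\Spa(\bfZ)=\Spa(\bfY\times_\bfX\bfZ)$), and that by Lemma~\ref{lem1}(1) we identify $\Spa(\bfY)$ with the set of semivaluations of $\bfX$ that factor through $\bfY$. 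Thus I must produce, for any two distinct semivaluations $\bfv_1,\bfv_2\in\Spa(\bfX)$, a pseudo-modification $\bfY\to\bfX$ through which exactly one of $\bfv_1,\bfv_2$ factors.

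First I would dispose of the affinoid case, which is essentially due to Huber. If $\bfX=(\Spec(B)\to\Spec(A))$ then, as recalled in the proof of Proposition~\ref{compprop}, $\Spa(\bfX)$ is Huber's adic space $\Spa(B,A)$ with $A,B$ discrete, and by \cite{Hub1} this is a spectral space; in particular it is $T_0$. So the theorem holds whenever $\bfX$ is affinoid.

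Next I would reduce the general case to the affinoid one. The key point is the claim that for every semivaluation $\bfv$ of $\bfX$ there is an affinoid pseudo-modification $\bfY\to\bfX$ along $\bfv$, i.e.\ with $\bfv\in\Spa(\bfY)$. Granting this, given distinct $\bfv_1,\bfv_2$ I choose such a $\bfY$ along $\bfv_1$. If $\bfv_2\notin\Spa(\bfY)$ then $\Spa(\bfY)$ already separates them. Otherwise both lie in $\Spa(\bfY)$, which is $T_0$ by the affinoid case, so some pseudo-modification $\bfZ\to\bfY$ contains exactly one of them in $\Spa(\bfZ)$; since pseudo-modifications are stable under composition (Lemma~\ref{modlem}(i)), $\bfZ\to\bfX$ is the desired separating pseudo-modification of $\bfX$, the membership condition being unchanged by functoriality.

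It remains to construct an affinoid pseudo-modification along a given semivaluation $\bfv\:\bfT\to\bfX$, with generic point $\eta\in\calX$ (the image of $\calT$) and center $x\in X$; this is where I expect the main difficulty to lie, and it is a simpler prototype of the construction later carried out in Theorem~\ref{strictqmodth}. On the generic side one needs an affine open $\calY\into\calX$ containing $\eta$, which has to be arranged carefully since algebraic spaces need not possess affine open neighbourhoods. On the model side the naive idea is to replace $X$ by its localization at the center $x$; because an algebraic space has in general no localization at a point, I would instead use an \emph{affine pushout} to manufacture an affine model space $Y$, separated and of finite type over $X$, whose structure map hits $x$ and is compatible with $\psi_\bfX$ and with the chosen $\calY$. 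That $\bfv$ genuinely lifts to the resulting model $\bfY$ would then be verified by reconstructing $\bfT$ as a \emph{Ferrand pushout}, exactly as in Lemma~\ref{qmodrem}. Finally, the spaces produced by the pushouts need not be of finite type over $X$, so I would invoke Rydh's approximation theory \cite{rydapr} to descend to a finite-type model, thereby obtaining a bona fide pseudo-modification and completing the argument.
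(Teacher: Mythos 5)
There is a genuine gap in your reduction to the affinoid case. Your argument hinges on the claim that every semivaluation $\bfv\:\bfT\to\bfX$ factors through an \emph{affinoid} pseudo-modification $\bfY\to\bfX$. By definition the generic component of a pseudo-modification is an open immersion, so $\calY$ is an open subspace of $\calX$ containing the generic point $\eta=\calT$; for $\bfY$ to be affinoid, $\calY$ must in addition be an affine scheme. Your claim therefore forces every point of $\calX$ occurring as the generic point of a semivaluation to have an affine open neighbourhood in $\calX$. But every point of $|\calX|$ is the generic point of a trivial (adic) semivaluation, and a qcqs algebraic space can have points with no scheme neighbourhood at all, i.e.\ points outside the schematic locus (the standard example being the quotient of $\bbA^1_k$ by $x\sim -x$ for $x\neq 0$, at the image of the origin; take $X=\calX$ and $\psi_\bfX=\id$). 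For two distinct semivaluations sharing such a generic point your method produces nothing. You do flag the difficulty on the generic side, but it cannot be ``arranged carefully'': the affinoid pseudo-modification you need simply does not exist. (The affinoid case itself, via Huber, and the formal mechanics of transporting a separating $\Spa(\bfZ)$ back to $\Spa(\bfX)$ by composing pseudo-modifications, are fine.)

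The paper's proof avoids this by never asking the separating pseudo-modification to be affinoid, and by achieving the separation directly rather than quoting the $T_0$ property of $\Spa(B,A)$. When $\calT\neq\calS$ one separates by an open $\calY\subseteq\calX$ and its schematic image in $X$. When $\calT=\calS=\eta$, one takes $Z$ to be an affine open subscheme of the \emph{closure} of $\eta$ (which exists since a qcqs algebraic space has a dense open subscheme), so $Z$ is only locally closed in $\calX$; one forms the $X$-affine pushout $U=T\coprod_\calT^{X^\aff}Z$, uses $T=\underleftarrow{\lim}_hU_h$ to shrink $Z$ until $S\to X$ does not factor through $U$, and then glues $U$ to a (non-affine) open neighbourhood $\calY\supseteq Z$ via Ferrand's pushout $Y'=U\coprod_Z\calY$; approximation then yields a finite-type $Y$ with $\bfT\in\Spa(\bfY)$ and $\bfS\notin\Spa(\bfY)$. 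So your instincts about the tools involved (affine pushout, Ferrand pushout, Rydh's approximation) are exactly right, but the global strategy of reducing to the affinoid case fails for algebraic spaces, and the pushout construction you defer to the end is in fact the entire content of the proof and must be carried out without the affineness of $\calY$ or $Y$.
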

\begin{proof}
Since the topology on $\Val(\bfX)$ is induced from $\Spa(\bfX)$, it suffices to prove the assertion for $\Spa(\bfX)$.

Let $\bfT,\bfS\in\Spa(\bfX)$ be two distinct points. We should construct a pseudo-modification $\bfY\to \bfX$ such that $\bfT$ belongs to the open set $\Spa(\bfY)$, but $\bfS$ does not, or vice-versa. If $\calT\ne\calS$ as points of $\calX$ then without loss of generality we may assume that $\calT$ does not belong to the closure of $\calS\in \calX$. Thus, there exists an open $\calY\subset\calX$ containing $\calT$ and not containing $\calS$. Set $Y$ to be the schematic image of $\calY\to X$, and $\bfY:=(\calY\to Y)$. Then $\bfY\to \bfX$ is a pseudo-modification, $\bfT$ belongs to $\Spa(\bfY)$, but $\bfS$ does not.

Assume now that $\calT=\calS$. Without loss of generality we may assume that $S\to X$ does not factor through $T$, and it suffices to construct a pseudo-modification $\bfY\to\bfX$ along $\bfT$ such that $S\to X$ does not factor through $Y$. This will be done in few steps as shown in the diagram
$$
\xymatrix{
\calT\ar[d]_{\psi_\bfT}\ar[r]\ar@{}[rd]|{\coprod^{X^\aff}} & Z\ar[d]^\psi\ar@{^{(}->}[r]\ar@{}[rd]|{\coprod} & \calY\ar[d]\ar@{=}[r] & \calY\ar@{^{(}->}[r]\ar[d]^{\psi_{\bfY}} & \calX\ar[d]^{\psi_\bfX}\\
T\ar[r] & U\ar[r] & Y'\ar[r] & Y\ar[r] & X}
$$
where $Z\into\calY$ is a closed immersion and $\calY\into\calX$ is an open immersion.

Pick any open affine $Z$ in the closure of $\calT$ in $\calX$. We may assume that $Z$ is $X$-affine: Indeed, let $\tilX\to X$ be an \'etale affine presentation. Set $\tilZ:=Z\times_X\tilX$. Let $W\subset\tilZ$ be an open affine subset. Then there exists $h\in\calO(Z)$ such that $Z_h\times_X\tilX\to \tilZ$ factors through $W$. Thus, after replacing $Z$ with $Z_h$ we may assume that $\tilZ$ and $\tilZ\to\tilX$ are affine, and hence so is $Z\to X$ by descent.

Let $U:=T\coprod_\calT^{X^\aff}Z$ be the $X$-affine pushout, i.e., the pushout in the category of $X$-affine algebraic spaces. Note that $U=\bfSpec_X(\calO_T\times_{\calO_\calT}\calO_Z)$. Then $Z\to U$ is separated and schematically dominant. Furthermore, $\calT=\underleftarrow{\lim}_hZ_h$ for the family of principal localizations of $Z$. We claim that $T=\underleftarrow{\lim}_hU_h$, where $U_h:=T\coprod_\calT^{X^\aff}Z_h$. To check this we can replace $X$ with an affine \'etale covering and all the $X$-spaces with their base changes, so assume that $X$ is affine. Then $X$-affine pushouts are nothing but affine pushouts, and by the very definition, $\calO(U_h)=\calO(T)\cap\calO(Z_h)$ as a subring of $\calO(\calT)=Frac(\calO(T))=Frac(\calO(Z))$. The equality $T=\underleftarrow{\lim}_hU_h$ follows, and as a consequence we obtain that the morphism $S\to X$ does not factor through $U_h$ for $h$ large enough. After replacing $Z\to U$ with $Z_h\to U_h$, we may assume that $S$ does not factor through $U$.

Let $\calY\subset\calX$ be an open subspace such that $Z\to \calY$ is a closed immersion. Let $Y':=U\coprod_Z\calY$ be the Ferrand pushout, see \cite[\S3.1]{push}. Then $U\to Y'$ is a closed immersion by \cite[Theorem~4.4.2(iii)]{push}. Since the morphism $S\to Y'$ takes the generic point $\calS=\calT$ of $S$ to $U$ and $S\to X$ does not factor through $U$, it does not factor through $Y'$ too. Finally, by approximation \cite[Lemma~2.1.10]{pruf}, we can choose $Y\to X$ separated of finite type such that $\calY\to X$ factors through $Y$, $\calY\to Y$ is affine and schematically dominant, and $S\to X$ does not factor through $Y$. Thus $\bfY:=(\calY\to Y)$ is as needed.
\end{proof}
We shall mention that a similar, but much more involved, argument will be used to prove Theorem~\ref{strictqmodth} below.

\section{Main results on modifications of models}\label{mmmsec}

\subsection{Approximating adic semivaluations with quasi-modifications}\label{strictqmodsec}
The following theorem is the key result towards Nagata's theorem. Its proof occupies whole~\S\ref{strictqmodsec}.

\begin{theor}\label{strictqmodth}
Let $\bfX$ be a finite type model, and $\bff\:\bfT\to\bfX$ an adic semivaluation. Then there exists a strict quasi-modification $\bfY\to \bfX$ along $\bfT$ such that $\bfY$ is good and affine.
\end{theor}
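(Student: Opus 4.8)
The plan is to transplant the construction from the proof of Theorem~\ref{thm:ValT0}, but to run it entirely inside the category of $\bfX$-adic models, so that the pseudo-modification we produce is \emph{strict} (hence a quasi-modification by Proposition~\ref{prop:strpmISqm}), and in addition \emph{good} and \emph{affine}. Since strictness forces $\calY\toisom\calX$, the whole task is to realize $\calX$ as an affine open subspace of a separated finite type $X$-space $Y$ in such a way that the valuation $\bfT$ acquires a center on the boundary $|Y|\setminus|\calX|$; this is a local form of Nagata compactification, carried out along the single adic semivaluation $\bfT$.

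First I would reduce to an affine base. Writing $x=\pi_\bfX(\bfT)\in X$ for the center and $\eta=\calT\in\calX$ for the generic point, I would pass to an étale affine neighborhood of $x$ and, using that $\psi_\bfX$ is of finite type together with the approximation results \cite[Lemma~2.1.10]{pruf} and \cite{rydapr}, arrange that $X$ is affine and $\calX$ is a finite type separated $X$-space. The finite type hypothesis enters exactly here: it is what makes the valuation underlying $\bfT$ \emph{uniformizable}, i.e. controlled by the images in $\kappa(\eta)$ of finitely many generators of $\calX$ over $X$. This uniformizability fails for general models and is the reason $\bfX$ must be assumed of finite type.

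The core is then the two-pushout construction. I would choose an $X$-affine piece $Z$ of the closure of $\eta$ in $\calX$ (made $X$-affine as in Theorem~\ref{thm:ValT0}) and form the $X$-affine pushout $U:=T\coprod_\calT^{X^\aff}Z$ of \cite[\S3.3]{push}, so that $Z\to U$ is separated and schematically dominant and $T=\varprojlim_h U_h$ over the principal localizations $Z_h$; the role of $U$ is to attach the valuation space $T$ to the chart $Z$ along their common generic point, thereby manufacturing the boundary on which $\bfT$ is to be centered. I would then glue $U$ back to $\calX$ by the Ferrand pushout $Y':=U\coprod_Z\calX$ of \cite[\S4]{push}, so that the composite $T\to U\to Y'$ lifts $\bfT$ while $\calX$ is kept whole (this is what makes the output strict, in contrast to Theorem~\ref{thm:ValT0}, where $\calX$ was shrunk to a proper open $\calY$). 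Crucially, all of $U$, $Y'$ and the intermediate models are treated as $\bfX$-adic, so that $Z$ is handled through its schematic closure compatibly with the center and the valuation is forced onto $|Y'|\setminus|\calX|$. Finally I would apply approximation (\cite[Lemma~2.1.10]{pruf}, \cite{rydapr}) to descend $Y'$ to a separated finite type $X$-space $Y$ for which $\psi_\bfY\colon\calX\to Y$ is an affine open immersion and schematically dominant and for which $\bfT$ still lifts; then $\bfY:=(\calX\to Y)$ is a strict, good, affine quasi-modification of $\bfX$ along $\bfT$, and adicity of the lift $\bfT\to\bfY$ follows formally from Lemma~\ref{lem1}(3).

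The hard part will be the adic layer, and it is twofold. First, one must secure \emph{goodness and affineness of $\psi_\bfY$ simultaneously with strictness}: unlike in Theorem~\ref{thm:ValT0}, where only a schematically dominant affine $\psi_\bfY$ was needed, here the boundary produced by the pushouts must be an effective, affine-friendly closed subspace whose open complement is exactly $\calX$, and this configuration must survive the descent to a finite type $Y$. Second, and more seriously, one must ensure that the center and the adicity of $\bfT\to\bfX$ are preserved at every stage — in particular that the center computed in the non-finite-type models $U$ and $Y'$ is not disturbed by the approximation. Controlling the center compatibly with the uniformization, while at the same time enforcing openness and affineness of the inclusion of the full $\calX$, is the delicate heart of the argument and is considerably more involved than the separation construction of Theorem~\ref{thm:ValT0}.
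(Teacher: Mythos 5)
Your overall strategy --- an $X$-affine pushout $T\coprod^{X^\aff}_\calT Z$ followed by a Ferrand pushout onto $\calX$ and a finite type approximation, with the finite type hypothesis entering through uniformizability of $T$ and with adicity of the output coming for free from strictness via Proposition~\ref{prop:strpmISqm} --- is exactly the paper's. But there is a concrete gap at the junction of the two pushouts. The $X$-affine pushout requires $Z$ to be an \emph{affine} piece of the closure of $\eta$, while the Ferrand pushout datum $(Z;U,\calX)$ requires $Z\to\calX$ to be a \emph{closed immersion}, i.e.\ $Z$ must be the \emph{whole} closure of $\eta$ in $\calX$, which is generally not affine. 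With $Z$ a proper affine open piece of the closure, $U\coprod_Z\calX$ is not a Ferrand datum; the only cheap fix is to shrink $\calX$ to an open $\calY$ in which $Z$ is closed, which is precisely what destroys strictness in Theorem~\ref{thm:ValT0}. The paper bridges this with an intermediate step you omit: it forms the affine pushout $U_1=T\coprod^{X^\aff}_\calT Z_1$ only with a suitable affine open $Z_1$ of the full closure $Z$, then Zariski-glues $U_1$ with $Z$ along $Z_1$ to get a $U$ containing all of $Z$ as a schematically dominant affine open, and only then takes $U\coprod_Z\calX$. Your phrase about handling ``$Z$ through its schematic closure compatibly with the center'' gestures at the issue but supplies no mechanism. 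Relatedly, the separatedness of $U\to X$ (hence of the pushout) is exactly where adicity of the intermediate non-strict model $(Z_1\to U_1)$ over $\bfX$ is used, and securing that adicity (Proposition~\ref{prop:affpushout}(c), via the uniformizer $g$) is the genuinely delicate computation your sketch does not indicate how to perform.

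A second gap is at the end: approximating $V\to X$ by a finite type $Y\to X$ only yields $\calX\to Y$ affine and schematically dominant; an open immersion does not descend through approximation for free, so at that stage $\bfY$ is strict and affine but not yet good. The paper needs a separate argument (its Step 5): an affine finite type morphism of algebraic spaces is quasi-projective, so $\psi_\bfY$ factors as a closed immersion into a projective fiber $\bfP(\calF_i)$ over $Y$, and replacing $Y$ by the schematic image there makes $\psi_\bfY$ an open immersion. You flag ``goodness simultaneously with strictness'' as hard but then assert that approximation directly produces an affine open immersion, which is unjustified. Finally, be careful with your opening reduction: replacing $X$ by an \'etale affine neighborhood of the center changes the model, and a quasi-modification of the cover does not descend to one of $\bfX$; the paper keeps $X$ an algebraic space throughout and uses affine presentations only to check properties (affineness, closed immersions, the limit $T=\underleftarrow{\lim}_hU_h$) by descent.
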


\subsubsection{Plan of the proof}\label{plansec}
The proof is relatively heavy, so we first provide a general outline. To ease the notation we identify $\calT$ with the corresponding point $\eta\in\calX$. Let $Z$ be the closure of $\eta$.

Step 1. Choose a suitable open $X$-affine subscheme $Z_1\into Z$ and set $U_1:=T\coprod_\eta^{X^\aff}Z_1$ to be the $X$-affine pushout of $T$ and $Z_1$. Ensure that $(Z_1\to U_1)$ is a good affine model satisfying certain conditions; the main one being adicity over $\bfX$.

Step 2. Define $U$ to be the gluing of $U_1$ and $Z$ along $Z_1$ in the Zariski topology, and show that $(Z\to U)$ is a good affine model separated over $\bfX$.

Step 3. Set $V:=U\coprod_Z\calX$ to be the Ferrand pushout (see \cite[\S3.1]{push}) and show that $(\calX\to V)$ is a good affine model separated over $\bfX$.

Step 4. Construct an affine strict quasi-modification $\bfY\to\bfX$ along $\bfT$ by choosing $Y\to X$ to be a finite type approximation of $V\to X$.

Step 5. Modify $\bfY$ so that it becomes good.


\begin{equation}\label{eq:push}
\xymatrix{
\calT\ar@{^{(}->}[d]_{\psi_\bfT}\ar[r]\ar@{}[rd]|{\coprod^{X^\aff}} & Z_1\ar@{^{(}->}[d]^{\psi_1}\ar@{^{(}->}[r]\ar@{}[rd]|{\bigcup} &
Z\ar@{^{(}->}[d]^\psi\ar[r]\ar@{}[rd]|{\coprod} & \calY\ar@{^{(}->}[d]\ar@{=}[r] & \calY\ar@{=}[r]\ar[d]^{\psi_{\bfY}} & \calX\ar[d]^{\psi_\bfX}\\
T\ar[r] & U_1\ar@{^{(}->}[r] & U\ar[r] & V\ar[r] & Y\ar[r] & X}
\end{equation}

\subsubsection{Comparison with the schematic case from \cite{temrz}}
Before proving the theorem, let us compare this plan with the parallel proof in \cite{temrz}. That proof runs as follows. If $\bfX$ is schematic then $\bfT$ is also schematic by \cite[Proposition~5.1.9]{pruf}. Consider the semivaluation ring $\calO$ of $\bff$ (see \cite[\S3.1]{temrz}), then $\Spec(\calO)$ is the affine Ferrand pushout $\calX_\eta\coprod_\eta T$ (see \cite[\S2.3]{temrz}). Approximate $\Spec(\calO)$ by an affine $X$-scheme $W$ of finite presentation, and note that for a small enough neighborhood $U\subseteq \calX$ of $\eta$ the morphism $U\to X$ lifts to a morphism $U\to W$ by approximation. Let $Y$ be the schematic image of $U$ in $W$. Then $Y$ is of finite type. Set $\bfY:=(U\to Y)$. Finally, care to choose $U$ and $W$ such that $\bfY\to \bfX$ is adic; this is the most subtle part.


Assume, now, that $\bfX$ is general. If the localization $X_\eta$ exists then the above proof applies: one considers the Ferrand pushout $\calX_\eta\coprod_\eta T$ and approximates it by an $X$-space of finite type to get a quasi-modification along $\bff$. However, localizations of algebraic spaces do not exist in general, and this forces us to use an open neighborhood of $\eta$ when forming a pushout. Then the pushout datum is not Ferrand, and we have to use few types of pushouts as outlined in \S\ref{plansec}.

\subsubsection{Step 1: the main ingredient}
Here is a key result used in the proof of \ref{strictqmodth}.

\begin{prop}\label{prop:affpushout}
Let $\bfX$ be a model, $\bfT\to\bfX$ an adic semivaluation and $Z\subseteq \calX$ the closure of $\calT$. Assume that $T$ is uniformizable (see \cite[\S5.2.19]{pruf}). Then,

(1) There exists a non-empty open affine subscheme $Z_1=\Spec(B)\into Z$ such that for any $0\neq h\in B$ the localization $Z_h=\Spec(B_h)$ satisfies the following properties: (a) $Z_h$ is $X$-affine, (b) $\psi_h\colon Z_h\to U_h:=Z_h\coprod_\calT^{X^\aff}T$ is a good affine model, and (c) the morphism $(U_h,Z_h,\psi_h)\to \bfX$ is adic.

(2) If $Z_1$ is as in (1) then  $\calT=\underleftarrow{\lim}_{0\neq h\in B}Z_h$ and $T=\underleftarrow{\lim}_{0\neq h\in B}U_h$. 
\end{prop}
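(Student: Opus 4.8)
The plan is to reduce everything to an honest valuation ring and a computation with subrings of $K:=k(\calT)$, then to isolate the single nontrivial point, which is the adicity in (c).

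First I would set up the reductions. Since the $X$-affine pushout, $X$-affineness, goodness and adicity are all compatible with \'etale base change on $X$ (as in the reduction in Theorem~\ref{thm:ValT0}), I may assume $X=\Spec A$ is affine. The hypothesis that $T$ is uniformizable is exactly what lets me replace $\bfT$ by an affine valuation model $T=\Spec\calO_v$, where $\calO_v$ is a valuation ring of $K=\Frac(\calO(Z))$ with generic point $\calT=\eta$. Over the affine base the $X$-affine pushout is computed on rings as a fibre product, so $\calO(U_h)=B_h\times_K\calO_v=B_h\cap\calO_v=:C_h$ inside $K$ (cf. the analogous computation in Theorem~\ref{thm:ValT0}). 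Now choose any affine open $\Spec B_0\subseteq Z$ and, as in Theorem~\ref{thm:ValT0}, shrink it by a principal localization so that $\Spec B_0\to X$ is affine; since affineness is preserved by further localization, condition (a) then holds for every $h$. Finally, adicity of $\bff\:\bfT\to\bfX$ forces the image of the diagonal to be a closed point of $\calX\times_XT$; computing in the chart $\Spec(B_0\otimes_A\calO_v)$, this says the subring $B_0\cdot\calO_v\subseteq K$ is a field, i.e. equals $K$. In particular $B_0\not\subseteq\calO_v$, so I may pick $f\in B_0$ with $v(f)<0$. Set $B:=(B_0)_f$, $Z_1:=\Spec B$, and $g:=1/f\in C:=B\cap\calO_v$, so that $v(g)>0$.

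Next I would treat all localizations uniformly. Writing $h\in C$ (harmless, since $g$ is a unit in $B$) one has $B_h=C_{gh}$, and the key computation is that $B_h=(C_h)_g$: the inclusion $\supseteq$ is clear, and for $\subseteq$ one clears the powers of $g$ from a denominator using $v(g)>0$ to land inside $\calO_v$. Hence $\psi_h\:Z_h=\Spec((C_h)_g)=D(g)\into\Spec C_h=U_h$ is a principal open immersion, which is good and affine, giving (b) for every $h$. The same identity shows that the diagonal of the model morphism $\bfU_h\to\bfU_1$ is the canonical map $Z_h\to Z_1\times_{U_1}U_h=D(g)\times_{\Spec C}\Spec C_h=Z_h$, i.e. the identity; thus $\bfU_h\to\bfU_1$ is cartesian, hence adic. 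By Lemma~\ref{propcomplem}(2) it therefore suffices to prove (c) for $h=1$, i.e. that $\bfU_1\to\bfX$ is adic.

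This last step is the crux and the main obstacle. The diagonal $\Delta\:Z_1\to\calX\times_XU_1$ is the graph of the locally closed immersion $Z_1\into\calX$ over $X$, so it factors as a closed immersion $Z_1\into\calX\times_XZ_1$ (a section of a separated morphism) followed by the open immersion $\calX\times_XZ_1\into\calX\times_XU_1$; hence $\Delta$ is a locally closed immersion, and $\bfU_1\to\bfX$ is adic precisely when its image is closed, i.e. when $\overline{\Delta(Z_1)}$ does not meet $\calX\times_XV(g)$. By the valuative criterion this amounts to ruling out a valuation ring $\calO_w$ of $K$ with $C\subseteq\calO_w$, $v_w(g)>0$, and such that $\eta\to\calX$ extends over $\Spec\calO_w$. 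The hard part will be to exclude such $\calO_w$: here I expect to use uniformizability together with adicity of $\bff\:\bfT\to\bfX$ — the latter says, via the maximality underlying the retraction of \cite[Lemma~4.2.8]{pruf}, that $\eta\to\calX$ extends over no pro-open of $\Spec\calO_v$ larger than $\eta$ — to show that any such $\calO_w$ would force $\eta\to\calX$ to extend beyond $\eta$ on $\Spec\calO_v$, a contradiction. This is the analogue, in the non-localizable algebraic-space setting, of the subtle adicity verification in the schematic case of \cite{temrz}.

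Finally, part (ii) is a routine colimit computation. Since the $B_h$ are the principal localizations of $B$, we have $\varinjlim_h B_h=\Frac B=K$, giving $\underleftarrow{\lim}_hZ_h=\Spec K=\calT$. For the second limit, $C_h\subseteq\calO_v$ for all $h$, while any $x\in\calO_v$ can be written $x=b_1/b_2$ with $b_i\in B$, so $x\in B_{b_2}\cap\calO_v=C_{b_2}$; hence $\varinjlim_hC_h=\calO_v$ and $\underleftarrow{\lim}_hU_h=\Spec\calO_v=T$.
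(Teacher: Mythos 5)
There are genuine gaps, and they sit exactly at the two places where the paper's own proof has to work hardest. First, your opening reduction is not available. Uniformizability (in the sense of \cite[\S4.2.14]{pruf}, as it is used in the paper) is the statement that on an affine \'etale presentation $\tilT=\Spec(\tilR)$ the generic fibre is a \emph{principal localization}, $\tilK=\tilR_\alp$ for some $\alp\in\tilR$; it does not say that $T$ is the spectrum of a valuation ring. In general $T$ is only a valuation algebraic space, affine over the algebraic space $X$; after pulling back to an affine \'etale cover $\tilX\to X$ it becomes a semilocal Pr\"ufer scheme with several closed points and with $\tilcalT$ a finite disjoint union of points, so the base-changed situation is no longer an instance of the proposition and one cannot simply ``assume $X=\Spec(A)$ and $T=\Spec(\calO_v)$.'' The paper never makes this reduction: it keeps $X$ an algebraic space, chooses $Z_1\subseteq Z$ downstairs, and verifies (a)--(c) upstairs by descent along the presentation.

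Second, even granting your simplified picture, the element $g$ is chosen incorrectly, and this breaks (b). You take $g=1/f$ for an arbitrary $f\in B_0$ with $v(f)<0$ (which adicity does provide), but the identity $B_h=(C_h)_g$ requires that for every $x\in B_h$ some $xg^n$ lie in $\calO_v$, i.e.\ that $v(g)$ be cofinal in the value group. For a rank-two valuation with group $\bbZ^2$ (lexicographic) and elements of $B_0$ of values $(-1,0)$ and $(0,-1)$, the choice $v(g)=(0,1)$ gives $(C_h)_g\subsetneq B_h$. This is precisely where uniformizability must enter: the paper's Step 2 produces $g$ (a norm of the uniformizing element $\alp$) with $\tilR_{\tilg}=\tilK$, and Steps 3--5 all hinge on that property; your argument never uses uniformizability in substance. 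Finally, for (c) you reduce --- correctly in structure, via cartesianness of $\bfU_h\to\bfU_1$ and Lemma~\ref{propcomplem} --- to $h=1$, and then explicitly leave the exclusion of the valuation ring $\calO_w$ as ``I expect to use\dots''. That exclusion is the heart of the proposition and cannot be waved at. The paper's actual route is different: it first proves (ii) ($T=\underleftarrow{\lim}_hU_h$), uses adicity of $\bfv$ to get a closed immersion $\calT\into T\times_XZ$, pushes this down to a finite level $U_h\times_XZ_1$ by approximation, and finishes with an explicit surjectivity computation after replacing $h$ by $hg^n$; in particular (c) is only achieved after a further shrinking of $Z_1$, a step your reduction to $h=1$ silently bypasses.
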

Note that $T\to X$ is affine by \cite[Proposition~5.1.9(i)]{pruf}, and so the $X$-affine pushout $U_1$ makes sense. We illustrate the assertion of (1) with the following diagram.
$$\xymatrix{
\calT\ar@{^{(}->}[r]\ar@{^{(}->}[d]_{\psi_\bfT}\ar@{}[rd]|{\coprod^{X^\aff}} & Z_h\ar@{^{(}->}[r]\ar[d]^{\psi_h} & Z\ar@{^{(}->}[r] & \calX\ar[d]^{\psi_\bfX}\\
T\ar[r] & U_h\ar[rr] & & X\\
}$$
\begin{proof}
We will only consider non-zero localization, so the word ``non-zero" will be omitted. To ease the notation we start with any non-empty affine open subsheme $Z_1=\Spec(B)\into Z$, and replace it with its localizations few times till all conditions of (1) are satisfied. Throughout the proof we fix an affine presentation $\pi\colon \widetilde{X}\to X$. For any $X$-space $W$, denote the base change $W\times_{X}\widetilde{X}$ by $\widetilde{W}$. Similarly, for any element $a\in\calO(W)$ we denote its pullback by $\tila\in\calO(\tilW)$.

Step 1. {\it There exists $Z_1$ that satisfies (a) and hence any its localization satisfies (a).} Let $V\subseteq\tilZ_1$ be a dense open affine subscheme. Then there exists $a\in B$ such that $V$ contains $Z_a\times_X\tilX$, and hence $Z_a\times_X\tilX=V_{\tila}$ is affine. By descent, $Z_a\to X$ is an affine morphism, hence, after replacing $Z_1$ with $Z_a$, we may assume that $Z_1$ satisfies (a). Obviously, any further localization of $Z$ is also $X$-affine.

At this stage, many relevant spaces become affine, so it will be useful to consider the corresponding rings. Set $\tilB:=\calO(\tilZ_1)$, $K:=\Frac(B)=\calO(\calT)$, and $\tilK:=\calO(\tilcalT)$. The space $\tilT$ is a uniformizable SLP since $T$ is so and $\tilT\to T$ is \'etale. Furthermore, it is affine since $T\to X$ is so. Set $\tilR:=\calO(\tilT)$.

Step 2. {\it Shrinking $Z_1$ we can achieve that there exists $g\in B^\times\subset K$ whose image $\tilg\in\tilK$ is contained in $\tilR$ and satisfies $\tilR_\tilg=\tilK$.} By uniformizability of $\tilT$, there exists $\alp\in\tilR$ such that $\tilK=\tilR_\alp$, and hence there exists $g\in K$, e.g., the norm of $\alp$, such that $\tilg\in\tilR$ and $\tilR_\tilg=\tilK$. Since $K=\Frac(B)$, after replacing $Z_1$ with a localization we may assume that $g\in B^\times$, and hence $\tilg\in \widetilde{B}^\times\cap\tilR$.

Step 3. {\it Each localization $Z_h$ satisfies (b).} Let us show first that
\begin{equation}\label{eq:loc}
\left(\widetilde{B}\cap\tilR\right)_\tilg=\widetilde{B}.
\end{equation}
Since $\tilg^{-1}\in \widetilde{B}$, the direct inclusion follows. For the opposite one, note that for any $a\in \widetilde{B}\subseteq\tilK=\tilR_\tilg$ there exists $n$ such that $a\tilg^n\in\tilR$, and hence $a\tilg^n\in \widetilde{B}\cap\tilR$. Thus, the opposite inclusion also holds.

It follows from \eqref{eq:loc} that $\tilZ_1\to\tilZ_1\coprod_{\tilcalT}^{X^\aff}\tilT=\Spec\left(\tilB\cap\tilR\right)$ is an affine open immersion. And it is schematically dominant, since $\widetilde{\calT}$ is dense in $\Spec\left(\widetilde{B}\cap\tilR\right)$. By descent, $Z_1\to Z_1\coprod_\calT^{X^\aff}T$ is a good affine model. Furthermore, since $g\in B_h^\times$ for any $h$, the same argument shows that (b) is satisfied for any localization $Z_h$.

Step 4. {\it $Z_1$ satisfies the assertion of (2).} Note that $$\underrightarrow{\colim}_h(\tilB_h\cap\tilR)\toisom\tilR\, .$$ Thus, $\tilT\toisom\underleftarrow{\lim}_h\tilU_h$, and, by \'etale descent, the morphism $T\to\underleftarrow{\lim}_h U_h$ is an isomorphism. The first isomorphism of (2) follows easily.

Step 5. {\it End of proof: shrinking $Z_1$ we can achieve that any localization of $Z_1$ satisfies (c).} Condition (c) means that the diagonal
$Z_h\to U_h\times_X\calX$ is a closed immersion, hence it is satisfied if and only if the immersion $\phi\:Z_h\to U_h\times_XZ$ is closed. The latter factors through the closed immersion $\phi_h\:Z_h\to U_h\times_XZ_1$, so (c) is satisfied if and only if the closures of $Z_h$ in $U_h\times_XZ$ and $U_h\times_XZ_1$ coincide. Since $\calT$ is dense in $Z_h$, these closures coincide with the closures of $\calT$.

The immersion $i\:\calT\to T\times_XZ$ is closed because $\bfT\to\bfX$ is adic. Note that the image of $i$ is contained in $T\times_XZ_h$ for any $0\neq h\in B$. Since $|T\times_{X}Z|=\underleftarrow{\lim}_h|U_h\times_{X}Z|$ by Step 4, it follows by approximation that for any $h$ large enough (i.e., there exists $0\neq a\in B$ such that for any $0\neq h\in aB$) the closure of $\calT$ in $U_h\times_XZ$ is contained in $U_h\times_X Z_1$.

We claim that replacing $Z_1$ with $Z_a$ one completes the step. It suffices to show that $Z_h\to U_h\times_X Z_a$ is a closed immersion for any $0\neq h\in aB$, and we will show that already the immersion $Z_h\to U_h\times_X Z_1$ is closed. By descent, this is equivalent to $\alp_h\:\calO(\widetilde{U}_h)\otimes\widetilde{B}\to\widetilde{B}_h$ being surjective. Clearly, we can replace $h$ with any element of the form $hg^n$, where $n\in\bfZ$ and $g$ is the unit from Step 2. Since $\tilK=\tilR_\tilg$ any choice of sufficiently negative $n$ guarantees that $\tilh^{-1}\in\tilR$. But then $\tilh^{-1}\in \calO(\widetilde{U}_h)=\widetilde{B}_h\cap\tilR$, so $\alp_h$ is surjective.
\end{proof}

\begin{proof}[Proof of Theorem~\ref{strictqmodth}]
Prior to implementing the plan of \S\ref{plansec} we need to fix some terminology. Recall that we identified $\calT$ with $\eta\in\calX$.

Setup: Let $\alp\:\tilcalX\to\calX$ be an \'etale morphism such that $\tilcalX$ is affine and $\alp$ is strictly \'etale over $\eta\in\calX$. Note that $\alp$ is not assumed to be surjective. For any $\calX$-space $W$, denote the base change $W\times_{\calX}\tilcalX$ by $\tilW$. Let $Z\subseteq \calX$ be the closure of $\eta$. Then $\tilZ$ is affine, and $\tilZ\to Z$ is generically an isomorphism. Let $Z_0\into Z$ be an open affine subscheme such that $\tilZ_0\to Z_0$ is an isomorphism and $\tilZ_0$ is a localization of $\tilZ$.

Step 1. Note that $T$ is uniformizable by \cite[Lemma~5.2.20]{pruf}. By Proposition~\ref{prop:affpushout}, $Z_0$ contains an open $X$-affine subscheme $Z_1$ such that $\psi_1\colon Z_1\to U_1:=Z_1\coprod_{\calT}^{X^\aff}T$ is a good affine model and the morphism $(U_1,Z_1,\psi_1)\to \bfX$ is adic, and these two properties also hold for any further localization of $Z_1$. We will use the latter to refine $\calX$.

 Pick a localization $Z_h$ of $Z_0$ contained in $Z_1$; it is also a localization of $Z_1$. Then $\tilZ_h$ is a localization of $\tilZ_0$, hence also a localization of $\tilZ$. In particular, there exists a localization $\tilcalX_h$ of $\tilcalX$ whose restriction onto $\tilZ$ equals $\tilZ_h$. Replacing $Z_1$ with $Z_h$ and $\tilcalX$ with $\tilcalX_h$ we achieve, in addition, that $\tilZ=\tilZ_1\toisom Z_1$.

Step 2. Let $U:=Z\cup U_1$ be the Zariski gluing of $U_1$ and $Z$ along $Z_1$. Then $\psi\:Z\to U$ is a schematically dominant affine open immersion, since $Z_1\to U_1$ is so. We claim that $U\to X$ is separated. Indeed, consider the diagonal $$\Delta_{U/X}\:U\to U\times_{X}U=(Z\times_{X}Z)\cup (U_1\times_{X}U_1)\cup (Z\times_{X}U_1)\cup (U_1\times_{X}Z)$$ Its restrictions onto the first two constituents of the target are the diagonals $\Delta_{Z/X}$ and $\Delta_{U_1/X}$, which are closed immersions by $X$-separatedness of $Z$ and $U_1$. The restrictions onto the last two constituents are isomorphic to the morphism $Z_1\to U_1\times_{X}Z$ whose composition with the closed immersion $U_1\times_XZ\into U_1\times_X\calX$ is the diagonal of the adic morphism $(U_1,Z_1,\psi_1)\to \bfX$. Hence they are closed immersions too, and we obtain that $\Delta_{U/X}$ is a closed immersion as well.

Step 3. Consider the Ferrand pushout datum $(Z;U,\calX)$, and let us show first that it is effective. By \cite[Theorem~6.2.1]{push}, it is sufficient to find an \'etale affine covering of $(Z;U,\calX)$. Choose any affine covering $\calX'\to\calX$, and consider the induced affine covering $Z':=\calX'\times_{\calX}Z$ of $Z$. Since $Z_1=Z\times_\calX\tilcalX$ by Step 1, we obtain that $(Z'\coprod Z_1; Z'\coprod U_1, \calX'\coprod \tilcalX)$ is a desired affine covering of $(Z;U,\calX)$.

Set $V:=U\coprod_Z \calX$. Then, by the definition of Ferrand pushouts, $\calX\to V$ is affine, and, by \cite[Theorem~6.3.5]{push}, it is a schematically dominant open immersion since $Z\to U$ is so. Furthermore, $\calX\to X$ and $U\to X$ are separated hence $V\to X$ is separated by \cite[Theorem~6.4.1]{push}.

Step 4. By approximation, we can factor the morphism $V\to X$ through some $Y$ such that $V\to Y$ is affine and $Y\to X$ is separated and of finite type. Moreover, after replacing $Y$ with the schematic image of $V$, we may assume that $V\to Y$ is schematically dominant, and hence so is $\calX\to Y$. Set $\bfY:=(\calX\to Y)$. Then $\bfY$ is a strict pseudo-modification, hence a strict quasi-modification, along $\bfT$. By construction, $\bfY$ is affine.

Step 5. In fact, this step reduces to the claim that any affine finite type morphism of algebraic spaces is quasi-projective. It seems to be missing in the literature, so we provide a proof. By approximation (see \cite[Th. A]{rydapr}), the $\calO_Y$-module $(\psi_\bfY)_*\calO_\calY$ is a filtered colimit of its finitely generated $\calO_Y$-submodules $\calF_i$. We claim that the finitely generated $\calO_Y$-algebra $(\psi_\bfY)_*\calO_\calY$ is generated by some $\calF_i$. Indeed, it suffices to check this on an affine presentation, and then the assertion is obvious. Thus, $\psi_\bfY$ factors into a composition of a closed immersion $\alp\:\calY\to\bfP(\calF_i)$ and a projective morphism $\bfP(\calF_i)\to Y$, where $\bfP(\calF_i)$ is the projective fiber as defined in \cite[II, Def. 4.1.1]{ega}. Let $Y'$ be the schematic image of $\alp$, then $\calY\into Y'$ is a good affine quasi-modification of $\bfX$ along $\bfT$.
\end{proof}

\subsection{Modifications of finite type models}\label{cofsec}
In this section we will prove our main results on modifications of a finite type model $\bfX$.

\subsubsection{Gluing of quasi-modifications}\label{globalsec}
Theorem~\ref{strictqmodth} provides a local construction of good quasi-modifications of $\bfX$. In order to glue them to a single modification, one has first to modify them. In fact, as in Raynaud's theory, we will only use blow ups. The latter is crucial as we will use the extension property of blow ups.

\begin{lem}\label{lem:glqmod}
Let $\bfX$ be a model of finite type, and $\bff_i\colon\bfY_i\to \bfX$, $1\le i\le n$, strict good quasi-modifications. Then there exists a strict quasi-modification $\bfU\to\bfX$ and an open covering $\bfU=\cup_{i=1}^n \bfU_i$ such that $\bfU_i\to\bfY_i$ is a blow up for each $i$.
\end{lem}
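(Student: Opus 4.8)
The plan is to follow the classical gluing strategy of Raynaud's theory (as in \cite[\S1]{con} for the scheme case), replacing Zariski-local arguments by \'etale-local ones and \cite{ega} by the approximation results of \cite{rydapr}. First I would form the \emph{join} $\bfW:=\bfY_1\times_\bfX\cdots\times_\bfX\bfY_n$. By Lemma~\ref{modlem} it is a quasi-modification of $\bfX$, and it is strict since its generic space is $\calX\times_\calX\cdots\times_\calX\calX=\calX$. To see that $\bfW$ is \emph{good}, note that the canonical map $\calX\to W$ factors as the iterated diagonal $\calX\to\calX\times_X\cdots\times_X\calX$ followed by the product of the open immersions $\calX\into Y_i$; the former is a closed immersion because $\psi_\bfX$ is separated, and the latter is an open immersion, so $\calX\to W$ is a schematically dominant immersion, hence an open immersion (as in Lemma~\ref{normmodlem}(i)). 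Each projection $p_i\:\bfW\to\bfY_i$ is then a strict pseudo-modification of good models, and it is adic by Lemma~\ref{propcomplem}(4) (its composition with $\bfY_i\to\bfX$ is the adic map $\bfW\to\bfX$, and the generic component of $\bfY_i\to\bfX$ is the identity of $\calX$); thus $p_i$ is a strict quasi-modification, and hence cartesian by Lemma~\ref{normmodlem}(ii). In particular $q_i\:W\to Y_i$ is separated, of finite type, and an isomorphism over the schematically dense open $\calX$. This reduces the problem to comparing each $\bfY_i$ with the single model $\bfW$.

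The key geometric input is that, although $q_i$ need not be proper, it can be turned into a blow up after an $\calX$-admissible blow up of $Y_i$. Concretely, I would apply flattening by blow up \cite[\S5]{RG} to $q_i$: there is an $\calX$-admissible blow up $\bfW_i\to\bfW$ (a blow up of the good model $\bfW$) whose composition $\bfW_i\to\bfY_i$ is realized as an $\calX$-admissible blow up, i.e. a blow up of the good model $\bfY_i$. As in the proof of Proposition~\ref{prop:compqblups}, the only changes relative to \cite{con} are that the flattening and the identification of strict transforms with blow-up charts must be carried out \'etale-locally, and that \cite[I, 9.4.7]{ega} is replaced by \cite[Corollary 4.11]{rydapr}.

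It remains to assemble the $\bfW_i$ into a single strict quasi-modification covered by blow ups of the $\bfY_i$. Since blow ups of the good model $\bfW$ are filtered (Lemma~\ref{modlem}(ii), Proposition~\ref{prop:compqblups}), I would choose a common blow up dominating all the $\bfW_i$ and use the extension property of blow ups (Proposition~\ref{prop:extofblups}) to extend the relevant blow ups of the overlaps, so that the blow ups $\bfU_i\to\bfY_i$ agree on intersections and glue to a model $\bfU$ with an open covering $\bfU=\cup_{i=1}^n\bfU_i$. Goodness of each $\bfU_i$ and adicity of $\bfU_i\to\bfX$ (Lemma~\ref{propcomplem}) then show that $\bfU\to\bfX$ is a strict quasi-modification, as required.

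I expect the main obstacle to be the flattening step together with the compatibility bookkeeping: one must flatten the non-proper projections $q_i$ by \emph{$\calX$-admissible} blow ups (so that goodness is preserved and the generic space stays $\calX$), identify the resulting strict transforms with honest blow ups of the $\bfY_i$ rather than mere quasi-blow ups, and arrange the blow-up centers so that the $\bfU_i$ match on overlaps; this last gluing is precisely where the extension property of blow ups, Proposition~\ref{prop:extofblups}, is indispensable.
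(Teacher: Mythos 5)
Your overall strategy---form the fiber product, flatten the projections via Raynaud--Gruson, and use the extension property of blow ups---is the same as the paper's, and for $n=2$ your argument is essentially the paper's proof. One local error first: the composition $\bfW_i\to\bfW\to\bfY_i$ cannot be ``realized as a blow up of $\bfY_i$,'' since blow ups are proper while $q_i\:W\to Y_i$ is not proper in general (if the $\bfY_i$ are distinct open submodels of a common modification, $W\to Y_i$ is an open immersion). What flattening actually produces is a blow up $Y_i'\to Y_i$ together with an open immersion of the strict transform $W_i\into Y_i'$, so $\bfW_i\to\bfY_i$ is only a quasi-blow up. You flag this yourself at the end, but the repair is not to upgrade $\bfW_i\to\bfY_i$ to a blow up (impossible); it is to take $\bfU_i$ to be a blow up of $\bfY_i'$ obtained by extending, via Proposition~\ref{prop:extofblups}, a suitable blow up of the open piece $\bfW_i$, and then invoke Proposition~\ref{prop:compqblups}.

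The genuine gap is the final gluing for $n\ge 3$. You glue all the $\bfU_i$ along a single common open $\bfW'$ lying over the $n$-fold product $\bfW=\bfY_1\times_\bfX\cdots\times_\bfX\bfY_n$, whereas $U_i$ and $U_j$ must be identified along (a blow up of) the \emph{pairwise} product $\bfY_i\times_\bfX\bfY_j$; gluing only along the total intersection yields a model space that is not separated over $X$, so $\bfU$ is not a model at all. Concretely, take $\calX=\bbG_m\subseteq X=\bbA^1$, $\bfY_1=\bfY_2=\bfX$ and $\bfY_3=(\bbG_m=\bbG_m)$: then $W=W'=\bbG_m$, no blow ups are needed, and your recipe glues two copies of $\bbA^1$ along $\bbG_m$, producing the line with doubled origin. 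Arranging compatible identifications along all pairwise overlaps is precisely the bookkeeping you defer, and it does not follow from Proposition~\ref{prop:extofblups} alone. The paper sidesteps it entirely: since blow ups are closed under composition (Proposition~\ref{prop:compqblups}), it suffices to prove the lemma for $n=2$ and induct, replacing $\bfY_1,\dotsc,\bfY_{n-1}$ by the glued model from the previous step (which is again a strict good quasi-modification). With that reduction added, your $n=2$ argument goes through.
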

\begin{proof}
Since blow ups are preserved by compositions, it is sufficient to prove the lemma for $n=2$. By Lemma~\ref{modlem} (1) and Lemma~\ref{normmodlem} (1), $\mathbf{Y}_{12}:=\mathbf{Y}_1\times_{\mathbf{X}}\mathbf{Y}_2$ is a good quasi-modification of $\bfX$. We are given the left diagram below, that we are going to extend to the right diagram.
$$
\xymatrix{
\calY_1\ar@{}[rd]|\square\ar@{^{(}->}[r]\ar@{=}[d]& Y_1\ar[r] & X\ar@{=}[d] & & \calY_1\ar@{=}[d]\ar@{^{(}->}[r] & U_1\ar@{->}[r] & Z_1\ar@{->}[r] & Y_1\ar[r] & X\ar@{=}[d]\\
\calY_{12}\ar@{}[rd]|\square\ar@{^{(}->}[r]\ar@{=}[d] & Y_{12}\ar[r]\ar[u]\ar[d] & X\ar@{=}[d] & & \calY_{12}\ar@{^{(}->}[r]\ar@{=}[d] & U_{12}\ar@{^{(}->}[u]\ar@{->}[r]\ar@{_{(}->}[d] & Z_{12}\ar@{^{(}->}[u]\ar@{->}[r]\ar@{->}[d] & Y_{12}\ar[u]\ar[r]\ar[d] & X \\
\calY_2\ar@{^{(}->}[r] & Y_2\ar[r] & X & & \calY_2\ar@{^{(}->}[r] & U_2\ar@{->}[r] & Z_2\ar@{=}[r] & Y_2\ar[r] & X\ar@{=}[u]\\}
$$
By Lemma~\ref{normmodlem} (2), the left upper square in the left diagram is cartesian. Thus, by \cite[Corollary 5.7.11]{RG}, there exists a $\calY_1$-admissible blow up $Z_1\to Y_1$ such that the strict transform $Z_{12}$ of $Y_{12}\to Y_1$ is flat and quasi-finite over $Z_1$. Since $Z_{12}\to Z_1$ is birational, it is an open immersion. Similarly, there exists a $\calY_2$-admissible blow up $U_2\to Z_2=Y_2$ such that the strict transform $U_{12}$ of $Z_{12}\to Z_2$ is flat and quasi-finite over $U_2$, hence $U_{12}\to U_2$ is an open immersion.

Note that $\bfU_2:=(\calY_2\to U_2)$ is a blow up of $\bfY_2$, and $\bfU_{12}:=(\calY_{12}\to U_{12})$ is both an open submodel of $\bfU_2$ and a blow up of $\bfZ_{12}:=(\calY_{12}\to Z_{12})$. By Proposition~\ref{prop:extofblups}, the blow up $\bfU_{12}\to\bfZ_{12}$ can be extended to a blow up $\bfU_1\to\bfZ_1$. Then $\bfU_1$ and $\bfU_2$ glue along $\bfU_{12}$ to a model $\bfU$, which satisfies the requirements of the lemma.
\end{proof}

\subsubsection{Nagata's compactification}
Now, we are in a position to prove Nagata's compactification theorem for algebraic spaces.

\begin{theor}\label{th:decompose1fintype}
Let $\bfX$ be a model of finite type. Then there exists an affine good modification $\bfX'\to\bfX$.
\end{theor}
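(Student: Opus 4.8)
The plan is to assemble the theorem from three inputs already in hand: the local construction of good affine quasi-modifications (Theorem~\ref{strictqmodth}), the quasi-compactness of $\Val(\bfX)$ (Proposition~\ref{compprop}), and the gluing statement (Lemma~\ref{lem:glqmod}). First I would produce a covering of $\Val(\bfX)$ by charts coming from local quasi-modifications. For each adic semivaluation $\bfT\in\Val(\bfX)$, Theorem~\ref{strictqmodth} yields a strict good affine quasi-modification $\bff_\bfT\:\bfY_\bfT\to\bfX$ along $\bfT$. Since $\bff_\bfT$ is a quasi-modification, Lemma~\ref{lem1}(2),(3) lets me identify $\Val(\bfY_\bfT)$ with the open subset $\Spa(\bfY_\bfT)\cap\Val(\bfX)$ of $\Val(\bfX)$, and by construction $\bfT\in\Val(\bfY_\bfT)$. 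Hence $\{\Val(\bfY_\bfT)\}_{\bfT\in\Val(\bfX)}$ is an open covering; by quasi-compactness of $\Val(\bfX)$ finitely many suffice, giving strict good affine quasi-modifications $\bfY_1\.\bfY_n$ with $\Val(\bfX)=\cup_{i=1}^n\Val(\bfY_i)$.

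Next I would glue. Applying Lemma~\ref{lem:glqmod} to the $\bfY_i$ produces a strict quasi-modification $\bff\:\bfU\to\bfX$ together with an open covering $\bfU=\cup_{i=1}^n\bfU_i$ in which each $\bfU_i\to\bfY_i$ is a blow up. Each $\bfU_i$ is good, being a blow up of the good model $\bfY_i$. It is moreover affine by cancellation: writing $\psi_{\bfY_i}$ as the composite $\calX\xrightarrow{\psi_{\bfU_i}}U_i\to Y_i$ with $U_i\to Y_i$ proper (hence separated) and $\psi_{\bfY_i}$ affine, the map $\psi_{\bfU_i}$ is affine. Since goodness and affineness of $\psi_\bfU$ may be tested over the open cover $U=\cup U_i$, and $\psi_\bfU^{-1}(U_i)=\calX$ because open submodels are cartesian, the glued model $\bfU$ is good and affine.

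It then remains to upgrade the strict quasi-modification $\bff\:\bfU\to\bfX$ to a modification, i.e. to prove $U\to X$ proper; this I would do through the valuative criterion packaged in Lemma~\ref{lem1}(4). As $\bff$ is an adic pseudo-modification, $\Val(\bff)$ is injective and carries $\Val(\bfU)$ into $\Val(\bfX)$. For surjectivity, each composite $\bfU_i\to\bfY_i\to\bfX$ is a strict quasi-modification, and since $\bfU_i\to\bfY_i$ is a blow up (hence a modification) Lemma~\ref{lem1}(4) gives $\Val(\bfU_i)=\Val(\bfY_i)$ as subsets of $\Val(\bfX)$. Because a semivaluation of $\bfU$ lifts to the open submodel $\bfU_i$ exactly when its center lies in $U_i$, one has $\Val(\bfU)=\cup_i\Val(\bfU_i)=\cup_i\Val(\bfY_i)=\Val(\bfX)$. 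Thus $\Val(\bff)$ is a bijection, and Lemma~\ref{lem1}(4), (c)$\Rightarrow$(a), shows that $\bff$ is a modification. Setting $\bfX':=\bfU$ gives the desired affine good modification.

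The genuinely hard work is already isolated in Theorem~\ref{strictqmodth} and Lemma~\ref{lem:glqmod}; given those, this theorem is a formal patching argument. The step I expect to require the most care is the final properness verification: one must check that the finitely many local $\Val$-charts truly exhaust $\Val(\bfX)$ and then convert this valuative surjectivity into properness via the strong valuative criterion underlying Lemma~\ref{lem1}(4). A secondary, more routine obstacle is the bookkeeping showing that goodness and affineness are preserved under both the blow-ups $\bfU_i\to\bfY_i$ and the subsequent gluing into $\bfU$.
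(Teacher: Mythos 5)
Your proposal is correct and follows essentially the same route as the paper's proof: cover $\Val(\bfX)$ by finitely many charts $\Val(\bfY_i)$ coming from Theorem~\ref{strictqmodth} via quasi-compactness (Proposition~\ref{compprop}), glue after blowing up using Lemma~\ref{lem:glqmod}, observe that affineness and goodness persist, and conclude properness from $\Val(\bfX')=\cup_i\Val(\bfY'_i)=\cup_i\Val(\bfY_i)=\Val(\bfX)$ together with Lemma~\ref{lem1}(4). The extra details you supply (the cancellation argument for affineness of the blow ups and the center criterion for lifting semivaluations to open submodels) are correct elaborations of steps the paper leaves implicit.
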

\begin{proof}
By Theorem~\ref{strictqmodth}, for any adic semivaluation $\bfT\in\Val(\bfX)$ there exists a strict quasi-modification $\bfY_\bfT\to \bfX$ along $\bfT$ such that $\bfY_\bfT$ is affine and good. Since $\Val(\bfX)$ is quasi-compact by Proposition~\ref{compprop}, there exists a {\em finite} collection of such quasi-modifications $\bfY_i\to \bfX$, $1\le i\le n$, for which $\Val(\bfX)=\cup_{i=1}^n\Val(\bfY_i)$.

By Lemma~\ref{lem:glqmod}, there exists a strict quasi-modification $\bfX'\to\bfX$ and an open covering $\bfX'=\cup_{i=1}^n\bfY'_i$ such that $\bfY'_i\to \bfY_i$ is a blow up for each $i$. Then $\bfY'_i$ are affine and good since $\bfY_i$ are so. Thus, $\bfX'$ is affine and good too, and $\Val(\bfX)=\cup_{i=1}^n\Val(\bfY_i)=\cup_{i=1}^n\Val(\bfY'_i)=\Val(\bfX')$. Hence $\bfX'\to\bfX$ is a modification by Lemma~\ref{lem1}, and we are done.
\end{proof}

Using the flattening theorem of Raynaud-Gruson once again, we can slightly improve the theorem as follows.

\begin{cor}\label{cor:decomposeth}
Let $\bfX$ be a model of finite type, and $\bfY_i\to \bfX$, $1\le i\le n$, quasi-modifications. Then there exist affine good modifications $\bff\:\bfX'\to\bfX$ and $\bfg_i\:\bfY'_i\to\bfY_i$ such that $\bfY'_i\subseteq\bfX'$ are open submodels for all $i$. Furthermore, if $\bfX$ is good then the modifications $\bff$ and $\bfg_1\.\bfg_n$ can be chosen to be blow ups.
\end{cor}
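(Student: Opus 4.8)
\emph{The plan} is to reduce the given (possibly non-strict) quasi-modifications to the strict situation of Lemma~\ref{lem:glqmod}, and then to argue exactly as in the proof of Theorem~\ref{th:decompose1fintype}. For the first assertion I may first reduce to the case that $\bfX$ is good: applying Theorem~\ref{th:decompose1fintype} to $\bfX$ gives an affine good modification $\bfX_0\to\bfX$, and, since quasi-modifications and modifications are preserved by base change and composition (Lemma~\ref{modlem}) and open submodels pull back, I may replace $\bfX$ by $\bfX_0$ and each $\bfY_i$ by $\bfY_i\times_\bfX\bfX_0$; in the ``furthermore'' part $\bfX$ is already good, so no reduction is made. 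Once $\bfX$ is good, each $\bfY_i$ is good by Lemma~\ref{normmodlem}(i), hence cartesian by Lemma~\ref{normmodlem}(ii), so that $\calY_i\toisom\calX\times_XY_i$.

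The key step — and, I expect, the main obstacle — is to enlarge each $\bfY_i$ to a \emph{strict} good quasi-modification containing it as an open submodel. Since $\calY_i\into\calX$ and $\calY_i\into Y_i$ are open immersions, I form the Zariski gluing $Z_i:=Y_i\cup_{\calY_i}\calX$ and set $\bfZ_i:=(\calX\into Z_i)$. The cartesian identity $\calY_i\toisom Y_i\times_X\calX$ shows that the two charts meet along a closed subspace of $Y_i\times_X\calX$, so the diagonal of $Z_i\to X$ is a closed immersion and $Z_i\to X$ is separated and of finite type; moreover $\calX$ is schematically dense in $Z_i$ because $\calY_i$ is schematically dense in $Y_i$. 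Hence $\bfZ_i\to\bfX$ is a strict good quasi-modification (Proposition~\ref{prop:strpmISqm}) and $\bfY_i\subseteq\bfZ_i$ is an open submodel. Verifying separatedness of the glued space from cartesianness is the delicate point of the whole argument.

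Next I enlarge the family so that it covers $\Val(\bfX)$: by Theorem~\ref{strictqmodth} each $\bfT\in\Val(\bfX)$ lies in $\Val(\bfY_\bfT)$ for a strict good affine quasi-modification $\bfY_\bfT\to\bfX$, and by quasi-compactness of $\Val(\bfX)$ (Proposition~\ref{compprop}) finitely many of these together with $\bfZ_1\.\bfZ_n$ give strict good quasi-modifications $\bfZ_1\.\bfZ_m$ with $\bigcup_j\Val(\bfZ_j)=\Val(\bfX)$. Applying Lemma~\ref{lem:glqmod} to $\bfZ_1\.\bfZ_m$ produces a strict quasi-modification $\bfX'\to\bfX$ with an open covering $\bfX'=\bigcup_j\bfU_j$ and blow ups $\bfU_j\to\bfZ_j$. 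As in the proof of Theorem~\ref{th:decompose1fintype}, $\Val(\bfX')=\bigcup_j\Val(\bfU_j)=\bigcup_j\Val(\bfZ_j)=\Val(\bfX)$, so $\bfX'\to\bfX$ is a modification by Lemma~\ref{lem1}(4), and $\bfX'$ is good, being covered by the good $\bfU_j$. For $i\le n$ I set $\bfY_i':=\bfY_i\times_{\bfZ_i}\bfU_i$, an open submodel of $\bfU_i\subseteq\bfX'$; since $\bfY_i'\to\bfY_i$ is the restriction of the blow up $\bfU_i\to\bfZ_i$ to the open $\bfY_i\subseteq\bfZ_i$, it is a blow up.

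Finally I arrange the stated refinements. For the first assertion, I apply Theorem~\ref{th:decompose1fintype} to $\bfX'$ to obtain an affine good modification $\bfX''\to\bfX'$ and put $\bfY_i'':=\bfY_i'\times_{\bfX'}\bfX''$; these are open submodels of the affine good $\bfX''$, hence affine and good, and composing all the modifications (including the reduction of the first paragraph) yields affine good modifications $\bff:\bfX''\to\bfX$ and $\bfg_i:\bfY_i''\to\bfY_i$ with $\bfY_i''\subseteq\bfX''$ open. For the ``furthermore'' part $\bfX$ is good and I instead invoke once more the Raynaud--Gruson flattening theorem, by which blow ups are cofinal among modifications of a good model: there is a blow up $\bfX''\to\bfX$ dominating $\bfX'$ with $\bfX''\to\bfX'$ again a blow up (cf.\ \cite{RG} and \cite[\S1]{con}). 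Then $\bfY_i'':=\bfY_i'\times_{\bfX'}\bfX''\subseteq\bfX''$ is open and $\bfg_i:\bfY_i''\to\bfY_i$ is a composition of blow ups, hence a blow up by Proposition~\ref{prop:compqblups}, while $\bff:\bfX''\to\bfX$ is a blow up.
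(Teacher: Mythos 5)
Your argument is correct, but it takes a genuinely different and considerably longer route than the paper. The paper first reduces to $n=1$ (taking fibre products over $\bfX$ at the end), then uses Theorem~\ref{th:decompose1fintype} to reduce to the case where $\bfX$ is affine and good, so that $\bfY\to\bfX$ is cartesian and $\bfY$ is affine and good by Lemma~\ref{normmodlem}; at that point a \emph{single} application of Raynaud--Gruson flattening \cite[Corollary 5.7.11]{RG} produces an $\calX$-admissible blow up $X'\to X$ whose strict transform $Y'\to X'$ of $Y\to X$ is flat and quasi-finite, hence (being birational) an open immersion, and $\bfX'=(\calX\to X')$, $\bfY'=(\calY\to Y')$ finish both assertions at once -- in particular the ``furthermore'' clause is immediate because $\bfX'\to\bfX$ \emph{is} the blow up supplied by flattening. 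You instead strictify each $\bfY_i$ by the Zariski gluing $Z_i=Y_i\cup_{\calY_i}\calX$ (your separatedness check via cartesianness is fine, and this gluing is a nice way of making precise the promise of Example~\ref{modexam}(i)), enlarge the family to cover $\Val(\bfX)$ using Theorem~\ref{strictqmodth}, and rerun the proof of Theorem~\ref{th:decompose1fintype} through Lemma~\ref{lem:glqmod}. This works, but it costs you two extra things: an additional pass through Theorem~\ref{th:decompose1fintype} to restore affineness of $\bfX'$, and, for the ``furthermore'' clause, an appeal to the cofinality of blow ups among modifications of a good model, which you must import from Raynaud--Gruson/Conrad since the paper only establishes it later (via this very corollary, in Theorem~\ref{thm:modmainth}); your external citation avoids circularity but makes the proof less self-contained than the paper's, where the blow-up refinement falls out of the same flattening step that proves the main assertion.
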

\begin{proof}
It is sufficient to prove the corollary for $n=1$. Set $\bfY:=\bfY_1$. By Theorem~\ref{th:decompose1fintype}, there exists an affine good modification $\bfW\to \bfX$. Thus, after replacing $\bfX$ with $\bfW$ and $\bfY$ with $\bfY\times_\bfX\bfW$, we may assume that $\bfX$ is good and affine. Then $\bfY\to \bfX$ is semi-cartesian by Lemma~\ref{normmodlem} (2), and hence $\bfY$ is good and affine. By \cite[Corollary 5.7.11]{RG}, there exists an $\calX$-admissible blow up $f\:X'\to X$ such that the strict transform $Y'\to X'$ of $Y\to X$ is an open immersion. Thus, $\bfX'=(\calX\to X')$ and $\bfY':=(\calY\to Y')$ are as needed.
\end{proof}

\subsection{Elimination of finite type hypothesis}
In this section, we use the approximation theory to extend our theory of modifications to the case of general models.

\subsubsection{Approximation of models}\label{apprsec}
By \cite[Lemma~2.1.10]{pruf}, given a model $\bfX$ one can represent $\calX$ as a filtered limit of finite type $X$-spaces $\calX_\alp$ with affine and schematically dominant transition morphisms. Then each $\bfX_\alpha:=(\calX_\alpha\to X)$ is a model, and we obtain an approximation morphism $\bfh_\alp\:\bfX\to\bfX_\alp$ such that $h_\alp$ is an isomorphism and $\gth_\alp$ is affine and schematically dominant. Approximation of a couple of models connected by a morphism is a slightly more delicate issue, which we address in the following lemma.

\begin{lem}\label{approxmodlem}
Let $\bff\:\bfY\to\bfX$ be a morphism of models such that $Y\to X$ is of finite type and $\calY\to\calX$ is flat and of finite presentation (e.g., a pseudo-modification of models). Fix a family of models $\bfX_\alp$ as above. Then for a large enough $\alp$ there exists a morphism $\bff_\alp\:\bfY_\alp\to\bfX_\alp$ and an $\bfX$-isomorphism $\bfY\toisom\bfY_\alp\times_{\bfX_\alp}\bfX$ such that $Y=Y_\alp$ and $\calY_\alp\to\calX_\alp$ is flat and of finite presentation. In addition, if $\bff$ belongs to a class $\bfP$ then $\bff_\alp$ can be chosen from the same class, where $\bfP$ is one of the following: pseudo-modifications, quasi-modifications, strict quasi-modifications.
\end{lem}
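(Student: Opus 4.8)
The plan is to spread out the morphism $\bff$ along the given inverse system $\calX=\varprojlim_\alp\calX_\alp$, first descending the generic component, then the structure morphism of $\bfY$, and finally checking that the relevant properties propagate to a finite stage. Throughout I keep the model space $Y$ and its structure map $f\:Y\to X$ literally fixed, so the model component of the sought $\bff_\alp$ will be $f$ itself; only the generic data $\calY$ and $\psi_\bfY$ get approximated. Since only finitely many conditions will be imposed and the index set is filtered, it suffices to satisfy each of them for $\alp$ large enough and then take a common index.

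First I would descend the generic component. As $\gtf\:\calY\to\calX$ is of finite presentation and the transition morphisms of $\calX=\varprojlim_\alp\calX_\alp$ are affine, Rydh's approximation \cite{rydapr} produces an index $\alp$ and a finitely presented $\gtf_\alp\:\calY_\alp\to\calX_\alp$ together with an identification $\calY\toisom\calY_\alp\times_{\calX_\alp}\calX$. Putting $\calY_\beta:=\calY_\alp\times_{\calX_\alp}\calX_\beta$ for $\beta\ge\alp$ gives an inverse system with affine transition morphisms and $\calY=\varprojlim_\beta\calY_\beta$. Because flatness spreads out for finitely presented morphisms \cite{rydapr}, after enlarging $\alp$ I may assume $\gtf_\alp$ is flat; each $\calY_\beta$ is then of finite type over $X$, being finitely presented over the finite type $X$-space $\calX_\beta$.

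The heart of the argument is to descend $\psi_\bfY\:\calY\to Y$ to a morphism $\psi_{\bfY_\alp}\:\calY_\alp\to Y$ over $X$ whose base change along $\calX\to\calX_\alp$ recovers $\psi_\bfY$. Intrinsically this is the limit formula $\varinjlim_\beta\Hom_X(\calY_\beta,Y)=\Hom_X(\calY,Y)$ for the target $Y\to X$; computing on an \'etale affine presentation, where $\calY=\Spec D$ with $D=\varinjlim_\beta D_\beta$ and $Y=\Spec C$, the mechanism is to lift the finitely many generators of $C$ over $X$ into some $D_\beta$ and to kill the defining equations of $C$ at a finite stage. The same limit formula, applied to the two $X$-morphisms $\psi_{\bfX_\alp}\circ\gtf_\alp$ and $f\circ\psi_{\bfY_\alp}$ out of $\calY_\alp$, lets me arrange that the model square commutes at a finite stage, so that $\bff_\alp:=(\gtf_\alp,f)$ is a genuine morphism of models. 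I expect this descent of $\psi_\bfY$ together with the commutativity of the square to be the main obstacle, precisely because it is here that one must spend the finiteness of the presentation of the data in order to reach a single finite stage.

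It remains to verify that $\bfY_\alp$ is a model and that the listed properties descend. Separatedness of $\psi_{\bfY_\alp}$ spreads out from that of $\psi_\bfY$, since the systems $\{\calY_\beta\times_Y\calY_\beta\}$ have affine transition morphisms with limit $\calY\times_Y\calY$ and the diagonal of the limit morphism is a closed immersion \cite{rydapr,ega}; schematic dominance is automatic, because the composite $\calY\to\calY_\alp\to Y$ equals the schematically dominant $\psi_\bfY$, forcing the second factor $\psi_{\bfY_\alp}$ to be schematically dominant. Thus $\bfY_\alp=(\calY_\alp\to Y)$ is a model, and computing the fiber product of models as in Proposition~\ref{prop:basicpropofmodels}(3) gives $\bfY\toisom\bfY_\alp\times_{\bfX_\alp}\bfX$. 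Finally, for the class $\bfP$: if $\bff$ is a pseudo-modification then $\gtf$ is an open immersion and $f$ is separated of finite type, and open immersions spread out for finitely presented morphisms, so $\gtf_\alp$ may be taken to be an open immersion; if $\bff$ is moreover a quasi-modification, i.e. adic, then $\Delta_\bff$ is proper, and as $\Delta_\bff$ is the base change of $\Delta_{\bff_\alp}$ along $\calX\to\calX_\alp$, properness spreads out \cite{ega} to make $\bff_\alp$ adic; and if $\bff$ is a strict quasi-modification, i.e. $\gtf$ is an isomorphism, then being an isomorphism spreads out for finitely presented morphisms, so $\gtf_\alp$ is an isomorphism at a finite stage.
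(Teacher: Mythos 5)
Your overall strategy coincides with the paper's: approximate $\gtf$ by a flat finitely presented $\gtf_\alp\:\calY_\alp\to\calX_\alp$ via Rydh's results, set $\calY_\beta=\calY_\alp\times_{\calX_\alp}\calX_\beta$, descend $\psi_\bfY$ to a finite stage, and spread out the defining properties of each class $\bfP$. But the step you yourself single out as the heart of the argument has a genuine gap. The limit formula $\varinjlim_\beta\Hom_X(\calY_\beta,Y)=\Hom_X(\calY,Y)$ is a theorem only when the target $Y\to X$ is locally of finite \emph{presentation}; here $Y\to X$ is merely of finite type. Concretely, on an affine chart $Y=\Spec(C)$ with $C=A[x_1\.x_n]/I$, your mechanism of ``lifting the generators and killing the defining equations at a finite stage'' breaks down because $I$ need not be finitely generated: each element of $I$ dies in some $D_\beta$, but no single finite stage kills all of them, and indeed the surjectivity half of the formula fails for general finite-type targets. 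What saves the argument --- and what the paper's citation of \cite[Lemma~2.1.11]{pruf} encodes --- is that the transition morphisms $\calY\to\calY_\beta$ are affine and \emph{schematically dominant} (because $\calX\to\calX_\beta$ is and $\gtf_\alp$ has been made flat, cf. \cite[Lemma~2.1.6]{pruf}); on affine charts this means $D_\beta\to D$ is injective, so once the generators of $C$ are lifted the entire, possibly infinitely generated, ideal $I$ is automatically killed already in $D_\beta$. Equivalently, one embeds $Y$ as a closed subspace of a finitely presented $X$-space $W$, factors $\calY\to W$ through some $\calY_\beta$ by the honest limit formula, and uses schematic dominance to see that $\calY_\beta\to W$ lands in $Y$. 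You do prove schematic dominance of $\psi_{\bfY_\alp}$ later, but you never invoke it at the one place where it is indispensable.

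Apart from this, your treatment of adicity is a mild variant of the paper's: you spread out properness of the diagonal $\Delta_{\bff_\alp}\:\calY_\alp\to\calX_\alp\times_XY$ directly, whereas the paper embeds $Y$ into a finitely presented $W$ and descends the closed-immersion property of $\calY\to W\times_X\calX$ via \cite[Proposition B.3]{rydapr}. Your route is workable, but it needs the remark that $\Delta_{\bff_\alp}$ is of finite presentation (by cancellation, since $\gtf_\alp$ is finitely presented and $\calX_\alp\times_XY\to\calX_\alp$ is of finite type), because the target $\calX_\alp\times_XY$ is not itself finitely presented over $\calX_\alp$; and the spreading-out statement you need is the one for algebraic spaces (Rydh's appendix), not \cite{ega}.
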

The assertion of the lemma can be illustrated by the following diagram:
$$
\xymatrix{
\calY \ar[r] \ar[d]^{\gtf} \ar@/^1pc/[rr]^{\psi_\bfY} \ar@{}[dr] |\square & \calY_\alp \ar[r]_{\psi_{\bfY_\alp}}\ar[d]^{\gtf_\alp} & Y \ar[d]^{f=f_\alp}\\
\calX \ar[r] \ar@/_1pc/[rr]_{\psi_\bfX} & \calX_\alp \ar[r]^{\psi_{\bfY_\alp}} & X
}
$$
The morphism $\calX\to\calX_\alp$ is affine and schematically dominant by the definition of $\calX_\alp$'s, so $\calY\to\calY_\alp$ is affine and schematically dominant by \cite[Lemma~2.1.6]{pruf}.
\begin{proof}
By approximation (see \cite[Propositions B.2 and B.3]{rydapr}), for $\alp$ large enough there exists an algebraic space $\calY_\alp$ and a flat finitely presented morphism $\gtf_\alp\:\calY_\alp\to\calX_\alp$ with an $\calX_\alp$-isomorphism $\calY\toisom\calY_\alp\times_{\calX_\alp}\calX$. For any $\beta\ge\alp$ set $\calY_\beta:=\calY_\alp\times_{\calX_\alp}\calX_\beta$. Then $\calY=\underleftarrow{\lim}_{\beta\ge\alp}\calY_\beta$ and the transition morphisms $\calY_\gamma\to\calY_\beta$ are affine and schematically dominant by \cite[Lemma~2.1.6]{pruf}. Thus, for $\beta$ large enough the $X$-morphism $\calY\to Y$ factors through $\calY_\beta$ by \cite[Lemma~2.1.11]{pruf}, and after replacing $\alp$ with such $\beta$ we may assume that it factors already through $\calY_\alp$. Hence the model $\bfY_\alp=(\calY_\alp\to Y)$ and the morphism $\bfY_\alp\to\bfX_\alp$ are as needed.

It remains to address the last statement. Assume that $f$ is a quasi-modification. By approximation, $Y$ embeds as a closed subscheme into a finitely presented $X$-space $W$. Since $\Delta_\bff\:\calY\to Y\times_X\calX$ is a closed immersion, $\calY\to W\times_X\calX$ is a closed immersion of finitely presented $\calX$-spaces. By \cite[Proposition B.3]{rydapr}, for a large enough $\alp$ we have that $\calY_\alp\to W\times_X\calX_\alp$ is a closed immersion. Since it factors through a closed immersion $Y\times_X\calX_\alp\to W\times_X\calX_\alp$, we obtain that $\Delta_{f_\alp}\:\calY_\alp\to Y\times_X\calX_\alp$ is a closed immersion, and hence $f_\alp$ is a quasi-modification. Finally, if $\gtf\:\calY\to\calX$ is an open immersion or an isomorphism, then we can choose $\gtf_\alp$ to be so.
\end{proof}

\subsubsection{Main results on modifications}
Using approximation we can eliminate some unnecessary finite type assumptions. Here is our main result on modifications of models that contains, in particular, the earlier results on finite type models.

\begin{theor}\label{thm:modmainth}
Let $\bfX$ be a model and $\bff_i\:\bfY_i\to\bfX$, $1\le i\le n$, be quasi-modifications. Then there exist affine modifications $\bfh\:\bfX'\to\bfX$ and $\bfg_i\:\bfY'_i\to\bfY_i$ such that $\bfY'_i\subseteq\bfX'$ are open submodels for all $i$. Moreover, if $\bfX$ is good then the modifications $\bfh$ and $\bfg_1\.\bfg_n$ can be chosen to be blow ups.
\end{theor}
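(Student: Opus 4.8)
The plan is to deduce the theorem from the finite type case already established in Corollary~\ref{cor:decomposeth} by feeding in the approximation machinery of \S\ref{apprsec}. I first observe that the \emph{moreover} clause requires no new work. If $\bfX$ is good, then $\psi_\bfX\:\calX\to X$ is an open immersion of qcqs algebraic spaces with finitely generated ideal of definition (Remark~\ref{goodrem}(i)), hence finitely presented and in particular of finite type; thus $\bfX$ is a finite type model, and the entire assertion for good $\bfX$ — including that $\bfh$ and the $\bfg_i$ may be taken to be blow ups — is exactly Corollary~\ref{cor:decomposeth}. It therefore remains only to prove the first assertion for an arbitrary (not necessarily finite type) model $\bfX$.

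Following \S\ref{apprsec}, I would write $\calX=\underleftarrow{\lim}_\alp\calX_\alp$ as a filtered limit of finite type $X$-spaces with affine, schematically dominant transition maps, so that each $\bfX_\alp:=(\calX_\alp\to X)$ is a finite type model and the approximation morphism $\bfh_\alp\:\bfX\to\bfX_\alp$ has $h_\alp=\id_X$ with $\gth_\alp$ affine and schematically dominant. Each $\bff_i\:\bfY_i\to\bfX$ is a quasi-modification, so $\gtf_i$ is an open immersion (flat and finitely presented) and $f_i$ is of finite type; hence Lemma~\ref{approxmodlem} applies. Because there are only finitely many indices and $\{\bfX_\alp\}$ is filtered, I can fix a single $\alp$ large enough that for every $i$ there is a quasi-modification $\bff_{i,\alp}\:\bfY_{i,\alp}\to\bfX_\alp$ together with an $\bfX$-isomorphism $\bfY_i\toisom\bfY_{i,\alp}\times_{\bfX_\alp}\bfX$ (the preservation of the class of quasi-modifications is the last clause of Lemma~\ref{approxmodlem}).

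Now I apply Corollary~\ref{cor:decomposeth} to the finite type model $\bfX_\alp$ and the quasi-modifications $\bff_{i,\alp}$, obtaining affine good modifications $\bfX'_\alp\to\bfX_\alp$ and $\bfg_{i,\alp}\:\bfY'_{i,\alp}\to\bfY_{i,\alp}$ with $\bfY'_{i,\alp}\subseteq\bfX'_\alp$ open submodels. I then set $\bfX':=\bfX'_\alp\times_{\bfX_\alp}\bfX$ and $\bfY'_i:=\bfY'_{i,\alp}\times_{\bfX_\alp}\bfX=\bfY'_{i,\alp}\times_{\bfY_{i,\alp}}\bfY_i$. By Lemma~\ref{modlem} modifications are stable under base change, so $\bfh\:\bfX'\to\bfX$ and $\bfg_i\:\bfY'_i\to\bfY_i$ are modifications, and the pullback of the open submodel $\bfY'_{i,\alp}\subseteq\bfX'_\alp$ along $\bfX'\to\bfX'_\alp$ is an open submodel $\bfY'_i\subseteq\bfX'$. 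The one point still to check is that $\bfX'$ and each $\bfY'_i$ are \emph{affine} models. Using Proposition~\ref{prop:basicpropofmodels}(3): since $\bfX'_\alp\to\bfX_\alp$ is strict, the generic space of $\bfX'$ is $\calX$ and its model space $X'$ is the schematic image of $\calX\xrightarrow{\gth_\alp}\calX_\alp\xrightarrow{\psi_{\bfX'_\alp}}X'_\alp$. This composite is affine ($\gth_\alp$ is affine and $\psi_{\bfX'_\alp}$ is affine because $\bfX'_\alp$ is affine good), and $X'\into X'_\alp$ is a closed immersion, so $\psi_{\bfX'}\:\calX\to X'$ is affine by the cancellation property of affine morphisms. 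The identical computation applies to each $\bfY'_i$.

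The argument is thus essentially bookkeeping on top of Lemma~\ref{approxmodlem} and Corollary~\ref{cor:decomposeth}. The only genuinely delicate points are (a) selecting one index $\alp$ that simultaneously approximates all $n$ quasi-modifications while keeping them quasi-modifications, which is handled by the filteredness of $\{\bfX_\alp\}$ together with the last clause of Lemma~\ref{approxmodlem}; and (b) verifying that affineness — unlike goodness — survives the base change $\bfX'_\alp\times_{\bfX_\alp}\bfX$. Point (b), the cancellation step above, is exactly why the theorem can only assert \emph{affine} (rather than affine good) modifications in the general case: the open immersion $\psi_{\bfX'_\alp}$ becomes merely an affine morphism after pulling back along the non-finite-type map $\gth_\alp$.
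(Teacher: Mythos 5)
Your proof is correct and follows essentially the same route as the paper's: dispose of the good case via Corollary~\ref{cor:decomposeth} (good $\Rightarrow$ finite type), descend the quasi-modifications to a finite type approximation $\bfX_\alp$ via Lemma~\ref{approxmodlem}, apply Corollary~\ref{cor:decomposeth} there, and pull back, checking that affineness and the open-submodel property survive. The only cosmetic differences are that the paper reduces to $n=1$ at the outset and verifies the open-immersion claim by showing $Y'=Y'_0$ directly from schematic dominance, whereas you keep all $n$ indices and phrase the same computation as stability of open submodels and a cancellation argument for affineness.
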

\begin{proof}
It is sufficient to prove the Theorem for $n=1$, so set $\bfY:=\bfY_1$ and $\bff:=\bff_1$. Note that the additional claim follows from Corollary~\ref{cor:decomposeth} since any good model is of finite type. Let us construct $\bfh$ and $\bfg=\bfg_1$ in the general case. By Lemma~\ref{approxmodlem}, there exists a quasi-modification $\bff_0\:\bfY_0\to\bfX_0$ of finite type models and a morphism $\bfX\to\bfX_0$ such that $\bff=\bff_0\times_{\bfX_0}\bfX$, $X=X_0$, $Y=Y_0$ and the morphisms  $\calX\to\calX_0$ and $\calY\to\calY_0$ are affine and schematically dominant. By Corollary~\ref{cor:decomposeth}, there exist modifications $\bfY'_0\to\bfY_0$ and $\bfX'_0\to\bfX_0$ with affine sources and such that $\bfY'_0\subseteq\bfX'_0$ is an open submodel.

The pullbacks $\bfY'=\bfY'_0\times_{\bfY_0}\bfY$ and $\bfX'=\bfX'_0\times_{\bfX_0}\bfX$ are affine modifications of $\bfY$ and $\bfX$, respectively. In particular, the natural morphism $\bfY'\to\bfX'$ is a quasi-modification. Furthermore, $Y'$ is the schematic image of $\calY'_0\times_{\calY_0}\calY=\calY$ in $Y'_0\times_{Y_0}Y=Y'_0$. Since $\calY\to\calY_0$ and $\calY_0=\calY'_0\to Y'_0$ are schematically dominant, we have that $Y'=Y'_0$. So, the morphism $Y'\to X'$ is nothing but the open immersion $Y'_0\into X'_0=X'$, and we obtain that $\bfY'\to\bfX'$ is, actually, an open immersion.
\end{proof}

\section{Main results on Riemann-Zariski spaces}\label{rzsec}

\subsection{The family of quasi-modifications along a semivaluation}\label{qmodsec}
In \S\ref{strictqmodsec}, we studied the family of strict quasi-modifications of a model $\bfX$ along an adic semivaluation $\bfT$. In this section we extend the study to the non-strict case.

\subsubsection{Quasi-modifications of $\bfX$ and the topology of $\calX$}
The following proposition is the main new ingredient we need to describe the family of all quasi-modifications of a model $\bfX$ along a semivaluation $\bfT$.

\begin{prop}\label{qmodlem}
Assume that $\bfX$ is a model with a semivaluation $\bfv\:\bfT\to\bfX$, and $U$ is a neighborhood of $\calT$ in $\calX$. Then there exists a quasi-modification $\bfY\to\bfX$ along $\bfT$ such that $\calY\subseteq U$. Moreover, if $\bfX$ is good then $\bfY$ can be chosen to be a quasi-blow up of $\bfX$.
\end{prop}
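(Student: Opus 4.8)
The plan is to run a shrunk version of the construction of Theorem~\ref{strictqmodth}: instead of forming the final Ferrand pushout over all of $\calX$ (which produces a \emph{strict} quasi-modification), I would form it over a small open neighbourhood of $\calT$ contained in $U$, much as the generic space is cut down in the proof of Theorem~\ref{thm:ValT0}. First I would reduce to $\bfX$ of finite type. Writing $\calX=\underleftarrow{\lim}_\alp\calX_\alp$ as in \S\ref{apprsec}, with $\calX_\alp\to X$ of finite type and affine schematically dominant transition maps, we may assume $U$ is quasi-compact, so $U$ and the Zariski point $\calT$ descend to an open $U_\alp\subseteq\calX_\alp$ and a semivaluation $\bfv_\alp\:\bfT\to\bfX_\alp$ of the finite type model $\bfX_\alp=(\calX_\alp\to X)$ for $\alp\gg 0$. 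By Lemma~\ref{modlem}, a quasi-modification $\bfY_\alp\to\bfX_\alp$ along $\bfv_\alp$ with $\calY_\alp\subseteq U_\alp$ pulls back to a quasi-modification $\bfY_\alp\times_{\bfX_\alp}\bfX\to\bfX$ along $\bfv$ whose generic space is contained in $U$, so it suffices to treat finite type $\bfX$.

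Next I would reduce to $\bfv$ adic: by Lemma~\ref{qmodrem}(ii) a quasi-modification of $\bfX$ is along $\bfv$ if and only if it is along the adic retraction $\bfu:=r_\bfX(\bfv)$, so we may assume $\bfv$ is adic. Then $T$ is uniformizable by \cite[Lemma~4.2.15]{pruf} and Proposition~\ref{prop:affpushout} is available. Write $\eta\in\calX$ for the Zariski point of $\bfv$ and $Z:=\overline{\{\eta\}}$. Using Proposition~\ref{prop:affpushout}, choose inside the neighbourhood $U\cap Z$ of $\eta$ an open $X$-affine subscheme $Z_1\subseteq Z$, together with its affine pushout $\psi_1\:Z_1\to U_1:=Z_1\coprod_\eta^{X^\aff}T$, such that $(U_1,Z_1,\psi_1)$ is a good affine model and the morphism $(U_1,Z_1,\psi_1)\to\bfX$ is adic.

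Now set $\calY:=U\setminus(Z\setminus Z_1)$, an open subspace of $\calX$ with $\eta\in\calY\subseteq U$ for which $Z\cap\calY=Z_1$, so that $Z_1\into\calY$ is a closed immersion. Form the Ferrand pushout $V:=U_1\coprod_{Z_1}\calY$ along the datum $(Z_1;U_1,\calY)$; as in Step~3 of the proof of Theorem~\ref{strictqmodth} this datum is effective (combine an affine presentation of $\calY$ with the affine $U_1$), $\calY\to V$ is a schematically dominant affine open immersion, and $V\to X$ is separated because $\calY\to X$ and the affine $U_1\to X$ are, by \cite[Corollary~4.6.13]{push}. A finite type approximation $V\to Y\to X$ with $\calY\to Y$ affine schematically dominant then yields the pseudo-modification $\bfY:=(\calY\to Y)\to\bfX$ along $\bfT$ (the lift $\bfT\to\bfY$ comes from $T\to U_1\to V\to Y$ and $\eta\in Z_1\subseteq\calY$) with $\calY\subseteq U$, as required. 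The main obstacle is to check that this $\bfY$ is \emph{adic}, i.e.\ a quasi-modification --- unlike in Theorem~\ref{strictqmodth}, adicity is no longer automatic from strictness (Proposition~\ref{prop:strpmISqm}). I expect adicity to follow from clause (c) of Proposition~\ref{prop:affpushout}: it makes the diagonal $\calY\to\calX\times_XV$ a closed immersion --- over $\calY\setminus Z_1$ because $\calY\to V$ is an open immersion there, and along $Z_1$ because $V$ is locally modelled on the $\bfX$-adic $U_1$ --- and this is inherited by $\bfY$ under an approximation that preserves quasi-modifications (Lemma~\ref{approxmodlem}).

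Finally, suppose $\bfX$ is good. Applying Theorem~\ref{thm:modmainth} to the quasi-modification $\bfY\to\bfX$ produces blow ups $\bfX'\to\bfX$ and $\bfg\:\bfY'\to\bfY$ with $\bfY'\subseteq\bfX'$ an open submodel. Since $\bfg$ is a blow up it is strict, whence $\calY'=\calY\subseteq U$; and $\bfY'$, being an open submodel of the blow up $\bfX'$ of $\bfX$, is by definition a quasi-blow up of $\bfX$. As a blow up is a modification, $\Spa(\bfg)$ is a bijection by Lemma~\ref{lem1}(4), so $\bfY'$ is still along $\bfT$, completing the good case.
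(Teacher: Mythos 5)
Your reduction steps (first to finite type, then to an adic $\bfv$ via $r_\bfX$) match the paper's, and your derivation of the quasi-blow-up refinement in the good case from Theorem~\ref{thm:modmainth} is sound. But the core of your argument replaces the paper's construction by a different one, and it has a genuine gap exactly at the point you flag: adicity of $\bfY=(\calY\to Y)\to\bfX$. Note that producing a \emph{pseudo}-modification along $\bfT$ with $\calY\subseteq U$ is essentially free (the schematic image of $\calY$ in $X$ already works), so the entire content of the proposition is that the diagonal $\calY\to\calX\times_XY$ can be made proper; this is precisely what your construction does not secure. Clause (c) of Proposition~\ref{prop:affpushout} only says that $\Delta(Z_1)$ is closed in $U_1\times_X\calX$, i.e.\ it controls the glued-in piece $U_1\setminus Z_1$ of $V$ (which, for an adic $\bfv$, lies over the complement of the image of $\calX$). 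It says nothing about the real danger: a point $\zeta\in\calY\setminus Z$ specializing in $\calX$ to a point of $\calX\setminus U$ while its image in $Y$ specializes compatibly over $X$. Whether such a pair lands in the closure of the diagonal depends entirely on how large the finite-type approximation $Y$ of $V$ is, and nothing in your construction bounds it: the schematic image of $V$ in a finite-type $X$-space can perfectly well be proper over $X$, in which case every such valuation lifts to $Y$ and $\Delta(\calY)$ is not closed. Lemma~\ref{approxmodlem} cannot repair this, since it approximates the \emph{generic} space of a model over a fixed model space, whereas here the problem is the passage from the infinite-type model space $V$ to a finite-type $Y$; a closed immersion into $\calX\times_XV=\underleftarrow{\lim}_\beta(\calX\times_XY_\beta)$ does not descend to a closed immersion at a finite level without a finite-presentation hypothesis that $\calY\to\calX\times_XV$ does not satisfy.

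This is exactly the difficulty the paper's proof is designed to avoid. After reducing to a good model (Theorem~\ref{thm:modmainth}), it takes $Y:=\oY\setminus W'$ where $\oY\to X$ is the blow up along $\calJ+\calI^m$ separating $W=\ol{\calX\setminus U}$ from $T$, and $\calY:=Y\cap\calX$; then $\bfY$ is an open submodel of the modification $(\calX\to\oY)$, so adicity is automatic from Lemma~\ref{propcomplem}, and $\calY\subseteq U$ because the strict transform $W'$ has been deleted. In other words, the blow up forces $Y\times_X\calX=\calY$ by hand, which is the cartesian reformulation of adicity for good models (Lemma~\ref{normmodlem}(ii)). To rescue your approach you would need either to insert this reduction to good models and replace your approximation step by such a separation argument (at which point you have reproduced the paper's proof), or to supply an independent mechanism guaranteeing that some finite-type approximation $Y$ of $V$ satisfies $Y\times_X\calX=\calY$; as written, no such mechanism is present.
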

\begin{proof}
First, let us reduce to the case when the model is of finite type. For this we realize $\bfX$ as the limit of finite type models $\bfX_\alp$ with affine transition morphisms, see \S\ref{apprsec}. By approximation, for large enough $\alp$ there exists an open subspace $U_\alp\into\calX_\alp$ whose preimage in $\calX$ is $U$. Let $\bfv_\alp\:\bfT_\alp\to\bfX_\alp$ be the image of $\bfv$ under the map $\phi_\alp\:\Spa(\bfX)\to\Spa(\bfX_\alp)$. If there exists a quasi-modification $\bfY_\alp\to\bfX_\alp$ along $\bfv_\alp$ such that $\calY_\alp\subseteq U_\alp$ then its pullback $\bfY\to\bfX$ is as required. Thus, we may assume that $\bfX$ is of finite type. Note that even if we have started with an adic $\bfv$, the adicity property can be lost at this stage since the morphism $\bfX\to\bfX_\alp$ is not necessarily adic.

By Lemma~\ref{qmodrem}(2), after replacing $\bfv$ with $r_\bfX(\bfv)$ we may assume that $\bfv$ is adic. Next, let us reduce to the case of a good model. By Theorem~\ref{thm:modmainth}, there exists a good modification $\bfX'\to\bfX$. Let $\bfT\to\bfX'$ denote the lifting of $\bfv$. If it factors through a quasi-modification $\bfY\to\bfX'$ with $\calY\subseteq U$, then the composition $\bfY\to\bfX$ is as required. Thus, we may assume that $\bfX$ is good, and our aim is to find a quasi-blow up $\bfY\to\bfX$ as asserted by the lemma.

Let $W$ be the closure of $\calX\setminus U$ in $X$. It is sufficient to find an $\calX$-admissible blow up $h\:\oY\to X$ that {\em separates} $W$ from $T$ in the sense that the strict transform $W'$ of $W$ is disjoint from the image of the lifting $T\to \oY$ of $v$. Indeed, then we can take $Y:=\oY\setminus W'$ and $\calY:=Y\cap \calX$. Existence of such $h$ is proved in the following lemma.
\end{proof}

\begin{lem}
Assume that $\bfX$ is a good model, $\bfT\to\bfX$ an adic semivaluation, and $W$ a closed subset of $|X|$ not containing the generic point $\calT$ of $\bfT$. Then there exists an $\calX$-admissible blow $\oY\to X$ that separates $W$ from $T$.
\end{lem}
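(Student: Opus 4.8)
The plan is to realize $\oY\to X$ as the blow up of an $\calX$-admissible ideal obtained by combining an ideal of definition of $\bfX$ with an ideal cutting out $W$, the relative exponent being dictated by the adicity of the semivaluation.

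\emph{Reductions.} Fix a finite type ideal of definition $\calI\subseteq\calO_X$, so that $V(\calI)=Z:=|X|\setminus|\calX|$, and, using that $X$ is qcqs, a finite type ideal $\calK\subseteq\calO_X$ with $V(\calK)=W$. Let $x\in X$ be the center of the semivaluation. If $x\in\calX$, then $v^{-1}(\calX)=T$ (the only open subset of the local space $T$ containing its closed point is $T$ itself), so $\calX\times_XT=T$ and the diagonal $\Delta\:\calT\to T$ is proper; but the inclusion of the generic point of a valuation space is proper only if $T=\calT$, whence $x=\calT\notin W$. If instead $x\notin W$ there is nothing to separate. In both cases the blow up of the unit ideal works, so we may assume $x\in Z\cap W$.

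\emph{The adicity input.} Work along the valuation ring $\calO_T$ of the semivaluation (see \cite[\S4]{pruf}). Since $\calT\notin W$, the pullback $\calK\calO_T$ is the unit ideal at the generic point, hence nonzero, of finite value $\delta:=v(\calK)$. Since $x\in Z$ and the semivaluation is adic, $\gamma:=v(\calI)>0$ is cofinal in the positive part of the value group: otherwise the convex subgroup generated by $\gamma$ would be proper and cut out a nonzero prime of $\calO_T$ on which $\calI$ is invertible, that is, a pro-open neighborhood of $\calT$ strictly larger than $\calT$ to which $\calT\to\calX$ extends, contradicting adicity by \cite[Lemma~4.2.8]{pruf}. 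Consequently there is an integer $N$ with $N\gamma\ge\delta$, i.e. $\calI^N\calO_T\subseteq\calK\calO_T$.

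\emph{The blow up and the separation.} Put $\calJ:=\calK+\calI^N$. It is of finite type, and $\calJ|_\calX=\calO_\calX$ because $\calI|_\calX=\calO_\calX$, so $\oY:=\Bl_\calJ(X)\to X$ is an $\calX$-admissible blow up with $\calY\toisom\calX$. As $\oY\to X$ is proper and an isomorphism over $\calX$, the generic point $\calT$ lifts into $\calY\subseteq\oY$ and, by the valuative criterion, the semivaluation lifts to $T\to\oY$ with center $\oy$. On the open locus $\oY_\calK\subseteq\oY$ where $\calK\calO_\oY=\calJ\calO_\oY$ the sheaf $\calJ\calO_\oY$ is invertible and $V(\calK\calO_\oY)$ coincides with the exceptional divisor, so the strict transform $W'$ satisfies $\calI_{W'}=(\calK\calO_\oY:(\calJ\calO_\oY)^\infty)=\calO_\oY$ there, i.e. $W'\cap\oY_\calK=\emptyset$. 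Moreover $\oy\in\oY_\calK$: pulling back to $\calO_T$ one always has $\calK\calO_T\subseteq\calJ\calO_T$, while both are principal of value $\min(\delta,N\gamma)=\delta$, hence equal, so $\calK$ generates $\calJ\calO_\oY$ at $\oy$. Therefore $\oy\notin W'$. Finally, the image of $T$ in $\oY$ is irreducible with generic point $\calT\in\calY$, and $\calT\notin W'$ since $\calT\notin W$ and $\oY\to X$ is an isomorphism over $\calX$; as $\oy$ specializes every point of this image and $W'$ is closed with $\oy\notin W'$, the whole image of $T$ is disjoint from $W'$. Thus $\oY\to X$ separates $W$ from $T$.

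\emph{Main obstacle.} The crux is the adicity input: translating adicity into the numerical inequality $N\gamma\ge\delta$, and carrying out the strict-transform and center computations along the valuation algebraic space $T$, where higher-rank valuations and the possibly non-schematic nature of $T$ must be handled via the valuation-theoretic results of \cite[\S4]{pruf}.
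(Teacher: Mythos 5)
Your strategy coincides with the paper's: blow up $\calJ=\calK+\calI^N$ for $N\gg0$, use adicity to force the lift of the semivaluation into the chart where $\calK$ generates $\calJ\calO_{\oY}$, and observe that the strict transform of $W$ misses that chart. Your value-group formulation of the adicity input (cofinality of $v(\calI)$ via convex subgroups) is an equivalent repackaging of the paper's divisibility argument (that $1/\og_1\in R_{\of_j}=\Frac(R)$ because each $\of_j$ vanishes off the generic point), and your chart/center computation is sound.

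The genuine gap is the one you flag but do not close: you argue throughout with ``the valuation ring $\calO_T$ of the semivaluation'' and a single value group, whereas $T$ is only a valuation algebraic space --- it need not be a scheme, and even once it is one it need not be local, so the quantities $\gamma=v(\calI)$ and $\delta=v(\calK)$ are not defined as stated. The paper fills this in with a concrete descent step: choose an affine \'etale presentation $\tilX\to X$; then $\tilT=T\times_X\tilX$ is separated over $\tilX$, hence a scheme by \cite[Proposition~4.1.9(i)]{pruf}, with finitely many closed points whose localizations are honest valuation schemes; one runs your computation at each of them (taking the maximum of the resulting exponents $N$) and descends, using that blow ups and strict transforms are compatible with the flat morphism $\tilX\to X$. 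Some such reduction must appear explicitly for the proof to be complete. A second, minor, point: an arbitrary closed subset $W\subseteq|X|$ need not equal $V(\calK)$ for a finite type ideal; one must first enlarge $W$ so that $X\setminus W$ is quasi-compact while still avoiding $\calT$, as the paper does in its opening sentence.
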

\begin{proof}
Enlarging $W$ we can achieve that $X\setminus W$ is quasi-compact and still $\calT\notin W$. Since $\calX$ is quasi-compact, there exists an ideal $\calI\subseteq\calO_X$ of finite type such that $V(\calI)=X\setminus \calX$. Similarly, there exists an ideal $\calJ\subseteq\calO_X$ of finite type such that $W=V(\calJ)$. Set $\calJ_m:=\calJ+\calI^m$ and $\oY_m:=Bl_{\calJ_m}(X)$. To prove the lemma, we will establish a stronger claim as follows: {\it the $\calX$-admissible blow up $g_m\:\oY_m\to X$ separates $W$ from $T$ for any large enough $m$}.

Assume, first, that $X=\Spec(A)$ is affine and $T=\Spec(R)$ is a valuation scheme. Let $I=(f_1\.f_r)$ and $J=(g_1\.g_k)$ be the ideals of $A$ corresponding to $\calI$ and $\calJ$. Let $\alp_1\.\alp_s$ be the generators of $I^m$ of the form $\prod_{j=1}^mf_{i_j}$, and let $\of_i$, $\og_j$, and $\oalpha_l$ denote the images of $f_i$, $g_j$, and $\alp_l$ in $R$. Since $R$ is a valuation ring, there exists $l$ such that $\og_l$ divides $\og_j$ for any $1\le j\le k$. We may assume that $l=1$. We claim that there exists $n_0$ such that $\of_j^n$ is divisible by $\og_1$ for any $1\le j\le k$ and any $n\ge n_0$. Indeed, since $\bfT\to\bfX$ is adic, $\of_j$ vanishes on the complement of the generic point, and hence $\frac{1}{\og_1}\in\Frac(R)=R_{\of_j}$, which implies the claim. Thus, if $m\ge rn_0$ then $\og_1$ divides all elements $\oalpha_l$. Hence the map $T\to X$ lifts to the $g_1$-chart $$\Spec \left(A\left[\frac{g_2}{g_1}\.\frac{g_k}{g_1},\frac{\alpha_1}{g_1}\.\frac{\alpha_s}{g_1}\right]\right)$$ of the blow up along $J_m=J+I^m=(g_1\.g_k,\alp_1\.\alp_s)$. Since $W$ is contained in $V(g_1)$, its strict transform is disjoint from the $g_1$-chart. Thus, $J_m$ is as claimed.

Consider, now, the general case. Choose an affine \'etale presentation $\widetilde{X}\to X$ and recall that $\widetilde{T}:=T\times_X\widetilde{X}$ is an SLP space. Moreover, $\tilT$ is separated over $\tilX$, and hence is a scheme by \cite[Proposition~5.1.9(i)]{pruf}. Set $\tilcalI=\calI\calO_\tilX$ and $\tilcalJ=\calJ\calO_\tilX$ and note that $\tilcalJ_m:=\calJ_m\calO_\tilX$ equals to $\tilcalJ+\tilcalI^m$. If $m$ is large enough then blowing up $\tilX$ along $\tilcalJ_m$ separates the strict transform of $\tilW=W\times_X\tilX$ from $\tilT$. Indeed, separating $W$ from $\tilT$ is equivalent to separating it from all localizations of $\tilT$ at the closed points. The latter are valuation schemes and there are finitely many of them, so our claim follows from the affine case that have already been verified. It remains to note that blow ups and strict transforms are compatible with flat morphisms, hence blowing up along $\calJ_m$ separates the strict transform of $W$ from $T$.
\end{proof}

\subsubsection{The main result}
Now, we are in a position to prove our main result about the family of quasi-modifications along an adic semi-valuation.

\begin{theor}\label{thm:qmodth}
Let $\bfX$ be a model, $\bfv\in\Val(\bfX)$, and $\bff\:\bfY\to\bfX$ a pseudo-modification along $\bfv$. Then there exists a quasi-modification $\bfg\:\bfZ\to\bfX$ along $\bfv$ such that $\bfZ$ is affine and $\bfg$ factors through $\bff$. Furthermore, $\bfg$ and $\bfZ$ can be chosen in such a way that:

(i) if $\bff$ is strict then so is $\bfg$,

(ii) if $\bfX$ is good then $\bfg$ is a quasi-blow up,

(iii) if $\bfX$ is of finite type then $\bfZ$ is good.
\end{theor}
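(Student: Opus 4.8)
The plan is to reduce the theorem to two essentially independent ingredients: \emph{(a)} a quasi-modification of $\bfX$ along $\bfv$ that factors through $\bff$ can be manufactured from $\bff$ by a fibre-product construction, and \emph{(b)} any model carrying an adic semivaluation admits a \emph{strict} affine quasi-modification along it (good, when the model is of finite type). Granting these, I apply \emph{(b)} to the quasi-modification produced in \emph{(a)} and compose: composition of quasi-modifications is again a quasi-modification by Lemma~\ref{modlem}, affineness of $\bfZ$ is intrinsic, so the composite $\bfg\:\bfZ\to\bfX$ is the desired quasi-modification, along $\bfv$ and factoring through $\bff$.

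For \emph{(a)}: if $\bff$ is strict it is already a quasi-modification by Proposition~\ref{prop:strpmISqm}, and I set $\bfZ_0:=\bfY$. Otherwise, since $\bff$ is along $\bfv$ the centre $\calT$ lies in the open $\calY\subseteq\calX$, so $\calY$ is a neighbourhood of $\calT$; Proposition~\ref{qmodlem} (applied with $U=\calY$, using only that $\bfv$ is a semivaluation) yields a quasi-modification $\bfW\to\bfX$ along $\bfv$ with $\calW\subseteq\calY$. Put $\bfZ_0:=\bfW\times_\bfX\bfY$. Because $\calW\subseteq\calY$, its generic space is $\calW\times_\calX\calY=\calW$, so the projection $\bfZ_0\to\bfW$ (a pseudo-modification by base change) is strict, hence a quasi-modification by Proposition~\ref{prop:strpmISqm}; composing with the quasi-modification $\bfW\to\bfX$ shows $\bfZ_0\to\bfX$ is a quasi-modification (Lemma~\ref{modlem}), while the other projection exhibits the factorisation through $\bff$. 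Since $\Spa(\bfW)\cap\Spa(\bfY)=\Spa(\bfZ_0)$ and $\bfv$ lies in both, $\bfZ_0$ is along $\bfv$; being a quasi-modification of $\bfX$, it lifts $\bfv$ to an adic $\bfv_0\in\Val(\bfZ_0)$ by Lemma~\ref{lem1}(3).

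For \emph{(b)}: when $\bfM$ is of finite type this is exactly Theorem~\ref{strictqmodth}. In general I approximate $\bfM=\varprojlim_\beta\bfM_\beta$ by finite type models with affine schematically dominant transition maps (\S\ref{apprsec}), push $\bfw$ to $\bfw_\beta\in\Spa(\bfM_\beta)$, and replace it by $\bfu_\beta:=r_{\bfM_\beta}(\bfw_\beta)\in\Val(\bfM_\beta)$ — the retraction is needed precisely because $\bfw_\beta$ may fail to be adic, the transition maps not being adic. Theorem~\ref{strictqmodth} produces a strict good affine quasi-modification $\bfZ_\beta\to\bfM_\beta$ along $\bfu_\beta$, which by Lemma~\ref{qmodrem}(ii) is then along $\bfw_\beta$; its pullback $\bfZ:=\bfZ_\beta\times_{\bfM_\beta}\bfM$ is a strict affine quasi-modification of $\bfM$ along $\bfw$ (strictness and affineness survive base change, and ``along $\bfw$'' follows from functoriality of $\Spa$, the common source mapping compatibly to $\bfZ_\beta$ and to $\bfM$ over $\bfM_\beta$). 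Applying \emph{(b)} to $(\bfZ_0,\bfv_0)$ and composing yields $\bfg$; strictness in (i) is tracked through both steps (taking $\bfZ_0=\bfY$ when $\bff$ is strict forces $\calZ=\calX$), and (iii) holds because for finite type $\bfX$ the model $\bfZ_0$ is again of finite type, so \emph{(b)} delivers a good $\bfZ$.

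Finally, (ii): a good $\bfX$ is of finite type, so the construction above gives an affine good quasi-modification $\bfZ\to\bfX$ along $\bfv$ factoring through $\bff$. Feeding the single quasi-modification $\bfZ\to\bfX$ into Corollary~\ref{cor:decomposeth} (good case) produces a blow up $\bfX'\to\bfX$ and a blow up $\bfZ'\to\bfZ$ with $\bfZ'\subseteq\bfX'$ an open submodel; thus $\bfZ'$ is a quasi-blow up of $\bfX$, it is still affine (and strict when $\bff$ is, since $\bfZ'\to\bfZ$ is a modification), it remains along $\bfv$ (Lemma~\ref{lem1}(4)), and it factors through $\bff$ via $\bfZ'\to\bfZ$. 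The main obstacle is conceptual rather than computational: one cannot simply transport a quasi-modification built over $\bfY$ down to $\bfX$, because its composite with the non-adic $\bff$ need not be adic; the fibre product $\bfW\times_\bfX\bfY$ in step \emph{(a)} is exactly what restores adicity over $\bfX$, after which the heavy local input Theorem~\ref{strictqmodth} together with routine approximation bookkeeping finishes the argument.
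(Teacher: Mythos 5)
Your proof is correct and follows essentially the same route as the paper: the key step in both is to invoke Proposition~\ref{qmodlem} to produce a quasi-modification whose generic space lies inside $\calY$, so that forming the fibre product with $\bfY$ yields a strict pseudo-modification and reduces everything to the strict case, which is then settled by the machinery of Section~\ref{mmmsec}. The only differences are organizational: you re-run the approximation argument and apply Theorem~\ref{strictqmodth} directly where the paper's Case~1 simply cites Theorems~\ref{th:decompose1fintype} and~\ref{thm:modmainth}, and you obtain the quasi-blow up claim (ii) by a final application of Corollary~\ref{cor:decomposeth} where the paper instead tracks quasi-blow ups through its chain of reductions via Proposition~\ref{prop:compqblups}.
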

\begin{proof}
Case 1. {\it $\bff$ is strict.} In this case $\bff$ is a quasi-modification by Proposition~\ref{prop:strpmISqm}, and the assertion of the theorem follows from Theorems \ref{th:decompose1fintype} and \ref{thm:modmainth}. 

Case 2. {\it $\bfX$ is good and affine.} By Proposition~\ref{qmodlem}, there exists a quasi-blow up $\bfX'\to\bfX$ along $\bfv$ such that $\calX'\into \calY\into \calX$. Then $\bfY'=\bfX'\times_\bfX\bfY$ is a strict pseudo-modification of $\bfX'$ along $\bfv$, and hence a quasi-modification by Proposition~\ref{prop:strpmISqm}. By Case 1, there exists a quasi-blow up $\bfZ\to\bfX'$ along $\bfv$ which factors through $\bfY'$. It remains to note that the composition $\bfZ\to\bfX'\to\bfX$ is a quasi-blow up by Proposition~\ref{prop:compqblups}.

Case 3. {\it $\bfX$ is of finite type.} By Theorem~\ref{th:decompose1fintype}, there exists a good affine modification $\overline{\bfX}\to\bfX$. Then $\obfY=\bfY\times_\bfX\overline{\bfX}$ is a pseudo-modification of $\obfX$, and by Case~2, there exists a quasi-blow up $\bfZ\to\obfX$ along $\bfv$ that factors through $\obfY$. The composition $\bfZ\to\obfX\to\bfX$ is a required quasi-modification of $\bfX$ by Lemma~\ref{normmodlem}. In particular, if $\bfX$ is good, then the modification $\overline{\bfX}\to\bfX$ can be chosen to be a blow up, and hence, in this case, $\bfZ\to\obfX\to\bfX$ is a quasi-blow up by Proposition~\ref{prop:compqblups}.

Case 4. {\it $\bfX$ is general.} First, by picking an affine modification $\obfX\to\bfX$ and proceeding as in Case 3, we reduce to the case when $\bfX$ is affine. Then, by Proposition~\ref{qmodlem}, there exists a quasi-modification $\bfX'\to\bfX$ along $\bfv$ such that $\calX'\into \calY\into \calX$. So, the argument from Case 2 reduces to Case 1.
\end{proof}

\subsection{Applications to RZ spaces}

\subsubsection{Topology on $\Val(\bfX)$}
Theorem~\ref{thm:qmodth} implies that the topology on the spaces $\Val(\bfX)$ is generated by quasi-modifications. In fact, a stronger statement is true:

\begin{theor}\label{topvalth}
Let $\bfX$ be a model, and $U\subseteq\Val(\bfX)$ be a quasi-compact open subset. Then there exists a quasi-modification $\bfW\to \bfX$ such that the image of $\Val(\bfW)$ in $\Val(\bfX)$ coincides with $U$.
\end{theor}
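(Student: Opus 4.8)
The plan is to cover $U$ by finitely many basic open sets coming from quasi-modifications, to realize all of these simultaneously as open submodels of a single modification of $\bfX$ via Theorem~\ref{thm:modmainth}, and then to take their union.

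First I would record that the sets (the images in $\Val(\bfX)$ of) $\Val(\bfY)$, for quasi-modifications $\bfY\to\bfX$, form a base of the Zariski topology on $\Val(\bfX)$. Indeed, a basic open set is of the form $\Spa(\bfY')\cap\Val(\bfX)$ for a pseudo-modification $\bfY'\to\bfX$; given a point $\bfv$ in it, Theorem~\ref{thm:qmodth} produces a quasi-modification $\bfZ\to\bfX$ along $\bfv$ which factors through $\bfY'$, so that $\bfv$ lies in the image of $\Val(\bfZ)$, and that image is contained in $\Spa(\bfY')\cap\Val(\bfX)$ because every lift to $\bfZ$ composes to a lift to $\bfY'$. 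Since $U$ is open it is a union of such sets contained in $U$, and since $U$ is quasi-compact finitely many suffice; thus $U=\bigcup_{i=1}^n\Val(\bfY_i)$ for quasi-modifications $\bff_i\:\bfY_i\to\bfX$ with $\Val(\bfY_i)\subseteq U$.

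Next I would apply Theorem~\ref{thm:modmainth} to the family $\bff_1\.\bff_n$, obtaining an affine modification $\bfh\:\bfX'\to\bfX$ together with modifications $\bfg_i\:\bfY'_i\to\bfY_i$ such that each $\bfY'_i$ is an open submodel of $\bfX'$ and $\bff_i\circ\bfg_i$ agrees with the composite $\bfY'_i\into\bfX'\to\bfX$ (the second map being $\bfh$). Set $\bfW:=\bigcup_{i=1}^n\bfY'_i\subseteq\bfX'$. As a union of open submodels, $\bfW$ is an open submodel of $\bfX'$, hence $\bfW\to\bfX'$ is a quasi-modification (an open immersion is cartesian by Remark~\ref{rem:OpenImmIsCart} and is a pseudo-modification in the qcqs setting); since $\bfX'\to\bfX$ is a modification, the composite $\bfW\to\bfX$ is a quasi-modification by Lemma~\ref{modlem}. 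This $\bfW$ is the candidate, and it remains to identify the image of $\Val(\bfW)$ in $\Val(\bfX)$ with $U$. Because $\bfh$ is a modification, $\Val(\bfh)$ is a bijection by Lemma~\ref{lem1}(4), so every $\bfv\in\Val(\bfX)$ has a unique adic lift $\bfv'\:\bfT\to\bfX'$. The key elementary observation is that $\bfv'$ factors through an open submodel $\bfV\subseteq\bfX'$ if and only if $T\to X'$ does, which (open immersions being cartesian) is equivalent to $|T|$ landing in $|V|$; and since $|T|$ is a chain of generizations of the center of $\bfv'$ and $|V|$ is stable under generization, this holds precisely when that center lies in $|V|$. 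Applying this to $\bfV=\bfW$ and to $\bfV=\bfY'_i$ and using $|W|=\bigcup_i|Y'_i|$ shows that the image of $\Val(\bfW)$ is the union of the images of the $\Val(\bfY'_i)$. Finally, each $\bfg_i$ is a modification, so $\Val(\bfg_i)$ is a bijection (Lemma~\ref{lem1}(4)); by the compatibility $\bff_i\circ\bfg_i=\bfh|_{\bfY'_i}$ the image of $\Val(\bfY'_i)$ in $\Val(\bfX)$ equals the image of $\Val(\bfY_i)$, i.e. $\Val(\bfY_i)$. Hence the image of $\Val(\bfW)$ equals $\bigcup_i\Val(\bfY_i)=U$.

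The main obstacle is the gluing step: quasi-modifications cannot be glued directly, so the whole argument hinges on first pulling all the $\bfY_i$ into a common modification $\bfX'$ of $\bfX$ (the strong form of Nagata's theorem, Theorem~\ref{thm:modmainth}), after which the union statement reduces to the elementary fact that factoring an adic semivaluation through an open submodel is a condition on its center alone.
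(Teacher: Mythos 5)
Your proposal is correct and follows essentially the same route as the paper: cover $U$ by basic opens, refine them to quasi-modifications via Theorem~\ref{thm:qmodth}, extract a finite subcover by quasi-compactness, realize all of them as open submodels of a single modification $\bfX'$ via Theorem~\ref{thm:modmainth}, and take the union inside $\bfX'$. The only difference is that you spell out the final verification (that the image of $\Val(\bfW)$ is exactly $U$, via the observation that factoring an adic semivaluation through an open submodel is detected by its center), which the paper leaves implicit.
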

\begin{proof}
By definition, there exists a finite covering $U=\cup_{i=1}^r\left(\Spa(\bfY_i)\cap\Val(\bfX)\right)$, where $\bfY_i\to \bfX$ are pseudo-modifications. By Theorem~\ref{thm:qmodth}, for any semivaluation $\bfT\in \Spa(\bfY_i)\cap\Val(\bfX)$ there exists a quasi-modification $\bfZ\to \bfX$ along $\bfT$ which factors through $\bfY_i$. Obviously, $\Val(\bfZ)$ is a neighborhood of $\bfT$ in $\Spa(\bfY_i)\cap\Val(\bfX)$. Thus, $U=\cup_{i=1}^n\Val(\bfZ_i)$ for a finite collection of quasi-modifications $\bfZ_i\to\bfX$. By Theorem~\ref{thm:modmainth}, there exist modifications $\bfX'\to \bfX$ and $\bfZ'_i\to \bfZ_i$ for $1\le i\le n$ such that $\bfZ'_i$ are open submodels of $\bfX'$. Thus, $\bfW:=\cup_{i=1}^n\bfZ'_i\subseteq\bfX'$ is as needed.
\end{proof}

\subsubsection{Homeomorphism of $\Val(\bfX)$ and $\RZ(\bfX)$}
We saw in \S\ref{mapstorzsec} that the reduction maps induce a map $\red_\bfX\:\Val(\bfX)\to\RZ(\bfX)$, and we are going to prove that it is a homeomorphism. Actually, the latter is essentially equivalent to the combination of Theorems~\ref{topvalth} and \ref{thm:modmainth}.

\begin{theor}\label{rzth}
The map $\red_\bfX\:\Val(\bfX)\to\RZ(\bfX)$ is a homeomorphism.
\end{theor}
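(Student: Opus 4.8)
The plan is to exhibit $\red_\bfX$ as a continuous open bijection; since a continuous open bijection is a homeomorphism, this suffices. Continuity is Proposition~\ref{prop:cont} and surjectivity is Corollary~\ref{cor:surjofred}, so only injectivity and openness remain, and I would obtain both from a single comparison of bases.

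The heart of the argument is the following compatibility. Let $\bfW\subseteq\bfX'$ be a compactifiable quasi-modification, i.e.\ an open submodel of a modification $\bfX'\to\bfX$. Since $\bfX'\to\bfX$ is a modification we have $\Val(\bfX')=\Val(\bfX)$ and $\RZ(\bfX')=\RZ(\bfX)$ (Lemma~\ref{lem1}(4) and the cofinality of Lemma~\ref{modlem}(ii)), and $\RZ(\bfW)$ is the subset of $\RZ(\bfX')$ obtained as the preimage of the open set $|W|\subseteq|X'|$ under the projection $\RZ(\bfX')\to|X'|$. As this projection precomposed with $\red_{\bfX'}$ is the center map $\pi_{\bfX'}$, the set $\red_\bfX^{-1}(\RZ(\bfW))$ consists exactly of the adic semivaluations whose center in $X'$ lies in $|W|$. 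Because $\bfW\subseteq\bfX'$ is an open, hence cartesian and adic, submodel (Remark~\ref{rem:OpenImmIsCart}), the valuation drags the generic point into $|\calW|=\psi_{\bfX'}^{-1}(|W|)$ as soon as the center lies in $|W|$, so these are precisely the adic semivaluations lifting to $\bfW$ (Lemma~\ref{lem1}(3)); that is, $\red_\bfX^{-1}(\RZ(\bfW))=\Val(\bfW)$. Applying surjectivity of $\red_\bfX$ then gives $\red_\bfX(\Val(\bfW))=\RZ(\bfW)$.

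For injectivity, suppose $\bfv_1\neq\bfv_2$ in $\Val(\bfX)$. By the $T_0$ property (Theorem~\ref{thm:ValT0}) there is a basic Zariski-open $\Val(\bfY)$, for a pseudo-modification $\bfY$, containing exactly one of them; this set is quasi-compact (Proposition~\ref{compprop}), so by Theorem~\ref{topvalth} it equals $\Val(\bfW)$ for a quasi-modification $\bfW\to\bfX$, which I may refine to a compactifiable one using Theorem~\ref{thm:modmainth} without changing the associated subset of $\Val(\bfX)$ (the refinement is a modification, which induces a bijection on $\Val$ by Lemma~\ref{lem1}(4)). The compatibility above then places exactly one of $\red_\bfX(\bfv_1),\red_\bfX(\bfv_2)$ in $\RZ(\bfW)$, so the two images are distinct.

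For openness I would show that the image of a basis is open. The quasi-compact opens form a basis of $\Val(\bfX)$, as the basic sets $\Val(\bfY)$ for pseudo-modifications are quasi-compact by Proposition~\ref{compprop}; each such set is $\Val(\bfW)$ for a quasi-modification by Theorem~\ref{topvalth}, and refining to a compactifiable $\bfW$ as above, the compatibility gives $\red_\bfX(\Val(\bfW))=\RZ(\bfW)$, which is open in $\RZ(\bfX)$ because it is the preimage of the open set $|W|\subseteq|X'|$ under the projection $\RZ(\bfX)=\RZ(\bfX')\to|X'|$. Hence $\red_\bfX$ is open, and together with continuity, surjectivity, and injectivity it is a homeomorphism. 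The main obstacle is the compatibility identity of the second step: one must correctly match the inverse limit defining $\RZ(\bfW)$ with the open subset of $\RZ(\bfX)$ carved out by $|W|$, and translate the condition ``the center lands in $|W|$'' into ``the adic semivaluation lifts to $\bfW$'', which is exactly where the cartesian and adic nature of open submodels, together with Lemma~\ref{lem1}(3), are indispensable.
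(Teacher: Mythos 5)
Your overall strategy is the one the paper itself follows: continuity (Proposition~\ref{prop:cont}) plus surjectivity (Corollary~\ref{cor:surjofred}), and then the key compatibility identifying $\Val(\bfW)$, for a compactifiable quasi-modification $\bfW\subseteq\bfX'$, with $\red_\bfX^{-1}$ of the open subset of $\RZ(\bfX)=\RZ(\bfX')$ carved out by $|W|\subseteq|X'|$. Your verification of that compatibility (matching ``center lands in $|W|$'' with ``lifts to $\bfW$'' via the cartesian nature of open submodels, Remark~\ref{rem:OpenImmIsCart}, and Lemma~\ref{lem1}(3), using that a valuation space maps into any open containing its closed point's image) is correct and is exactly the content of the paper's second paragraph; your packaging as ``continuous open bijection'' versus the paper's ``bijection on quasi-compact opens'' is only cosmetic.

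There is, however, one unjustified step, used in both your injectivity and your openness arguments: the claim that the basic open $\Spa(\bfY)\cap\Val(\bfX)$ is quasi-compact for an \emph{arbitrary} pseudo-modification $\bfY\to\bfX$, which you attribute to Proposition~\ref{compprop}. That proposition yields quasi-compactness of $\Val(\bfY)$ as a space in its own right (and of $\pi_\bfX^{-1}(Z)$ for pro-constructible $Z\subseteq|X|$), but the basic open in question consists of those semivaluations lifting to $\bfY$ that are adic \emph{over} $\bfX$; by Lemma~\ref{lem1}(3) this set is contained in $\Val(\bfY)$, yet it can be a proper subset, since adicity over $\bfY$ need not imply adicity over $\bfX$ when $\bfY\to\bfX$ is not adic. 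Without quasi-compactness you cannot feed this set into Theorem~\ref{topvalth}. The repair is short and is what the paper does implicitly: by Theorem~\ref{thm:qmodth}, every basic open $\Spa(\bfY)\cap\Val(\bfX)$ is a union of sets $\Val(\bfZ)$ over quasi-modifications $\bfZ\to\bfX$ factoring through $\bfY$, and for a quasi-modification one does have $\Val(\bfZ)=\Spa(\bfZ)\cap\Val(\bfX)$ (adic morphisms compose, Lemma~\ref{propcomplem}) with $\Val(\bfZ)$ quasi-compact by Proposition~\ref{compprop}. Hence quasi-compact opens form a basis, which is all your openness argument requires; and for injectivity you should separate $\bfv_1$ from $\bfv_2$ by such a $\Val(\bfZ)$ rather than by the full basic open. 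After that substitution, your compactification step via Theorem~\ref{thm:modmainth} and the compatibility identity complete the proof exactly as you wrote.
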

\begin{proof}
Recall that $\red_\bfX$ is continuous by Proposition~\ref{prop:cont}, and is surjective by Corollary~\ref{cor:surjofred}. If $U\subset\RZ(\bfX)$ is open and quasi-compact then the open set $\red_\bfX^{-1}(U)$ is quasi-compact by Proposition~\ref{compprop}. In particular, we obtain a map $\red_\bfX^{-1}$ between quasi-compact open subsets of $\RZ(\bfX)$ and $\Val(\bfX)$. This map is injective by the surjectivity of $\red_\bfX$, and we claim that, in fact, it is a bijection.

Let $V\subset\Val(\bfX)$ be a quasi-compact open subset. By Theorem~\ref{topvalth}, there exists a quasi-modification $\bfW\to\bfX$ such that the image of $\Val(\bfW)$ in $\Val(\bfX)$ is $V$. By Theorem~\ref{thm:modmainth}, there exist modifications $\bfW'\to\bfW$ and $\bfX'\to\bfX$ such that $\bfW'$ is an open submodel in $\bfX'$. Recall that $\Val(\bfW)=\Val(\bfW')$ and $\Val(\bfX)=\Val(\bfX')$ by Lemma~\ref{lem1}. Since $\Val(\bfW')\subset\Val(\bfX')$ is the preimage of the quasi-compact open subset $W'\subset X'$, it is also the preimage of the quasi-compact open subset of $\RZ(\bfX')=\RZ(\bfX)$ defined by $W'$.

The topologies on both spaces are generated by open quasi-compact sets, hence it remains to show that $\red_\bfX$ is injective. The latter is true since the topology on $\Val(\bfX)$ is induced from $\RZ(\bfX)$ by what we have proved above and distinguishes points by Theorem~\ref{thm:ValT0}.
\end{proof}

We conclude this section with the following corollary that generalizes certain results of \cite{temrz} on schematic models.

\begin{cor}\label{rzcor}
Let $\bfX$ be a model. Then,

(1) The natural map $i_\bfX\:\calX\to\RZ(\bfX)$ is injective, and any point $x\in\RZ(\bfX)$ possesses a unique minimal generalization in $i_\bfX(\calX)$.

(2) The retraction $r_\bfX\:\Spa(\bfX)\to\Val(\bfX)$ is a topological quotient map.
\end{cor}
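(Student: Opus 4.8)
\emph{Part (i).} The plan is to transport everything to $\Val(\bfX)$ via the homeomorphism $\red_\bfX$ of Theorem~\ref{rzth}. First I would observe that $i_\bfX$ is continuous (being the map into the inverse limit $\RZ(\bfX)=\underleftarrow{\lim}|X_\alp|$) and factors as $\calX\to\Val(\bfX)\xrightarrow{\red_\bfX}\RZ(\bfX)$, where the first map sends $\eta$ to its trivial semivaluation $\nu_\eta$ (with valuation model $\Spec k(\eta)$, which is adic). Indeed $\red_\bfX(\nu_\eta)$ has $\alp$-component the center $\psi_{\bfX_\alp}(\eta)$, which is exactly $i_\bfX(\eta)$. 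Since $\nu_\eta$ has generic point $\eta$, the map $\eta\mapsto\nu_\eta$ is injective, and as $\red_\bfX$ is a homeomorphism, $i_\bfX$ is injective. It then remains to analyse, inside $\Val(\bfX)$, the generalizations of a point that lie in the image of $\eta\mapsto\nu_\eta$.

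Given $x\in\RZ(\bfX)$, set $\bfv:=\red_\bfX^{-1}(x)$ and let $\eta\in\calX$ be its generic point; I claim $\nu_\eta$ is the sought minimal generalization. That $\nu_\eta$ is a generalization of $\bfv$ is immediate: any basic open $\Spa(\bfY)\cap\Val(\bfX)$ containing $\bfv$ comes from a pseudo-modification $\bfY$ along $\bfv$, whence $\eta\in\calY$ and $\nu_\eta$ lifts to $\bfY$ as well, so $\bfv\in\overline{\{\nu_\eta\}}$. For minimality, suppose $\nu_{\eta'}$ is any generalization of $\bfv$. By Proposition~\ref{qmodlem}, for every neighborhood $U$ of $\eta$ in $\calX$ there is a quasi-modification $\bfY_U\to\bfX$ along $\bfv$ with $\calY_U\subseteq U$; then $\Spa(\bfY_U)\cap\Val(\bfX)$ is an open neighborhood of $\bfv$, hence contains $\nu_{\eta'}$, forcing $\eta'\in\calY_U\subseteq U$. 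Thus $\eta'$ lies in every neighborhood of $\eta$, i.e. $\eta\in\overline{\{\eta'\}}$, and by continuity of $i_\bfX$ the point $i_\bfX(\eta)$ specializes $i_\bfX(\eta')$, i.e. $\nu_\eta\in\overline{\{\nu_{\eta'}\}}$. So $\nu_\eta$ is a specialization of every generalization of $\bfv$ in the image of $\calX$; uniqueness of the minimal one follows because $\Val(\bfX)$ is $T_0$ (Theorem~\ref{thm:ValT0}).

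\emph{Part (ii).} Recall that $\Val(\bfX)$ carries the topology induced from $\Spa(\bfX)$ and that $\iota_\bfX$ is a continuous section of $r_\bfX$ (\S\ref{retrsec} and Proposition~\ref{prop:cont}). Hence $r_\bfX$ is surjective, and if $r_\bfX^{-1}(V)$ is open then $V=\iota_\bfX^{-1}(r_\bfX^{-1}(V))$ is open in $\Val(\bfX)$; so $r_\bfX$ will be a quotient map as soon as it is continuous. The heart of the matter is the computation that for a \emph{quasi}-modification $\bfZ\to\bfX$ one has $r_\bfX^{-1}(\Spa(\bfZ)\cap\Val(\bfX))=\Spa(\bfZ)$: this is precisely Lemma~\ref{qmodrem}(ii), which asserts that $\bfZ$ is along $r_\bfX(\bfw)$ if and only if it is along $\bfw$. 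Thus preimages of the quasi-modification basic opens are open in $\Spa(\bfX)$.

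To conclude I would show that every open $V\subseteq\Val(\bfX)$ is a union of such quasi-modification opens. For $\bfv\in V$, openness (in the pseudo-modification topology) gives a pseudo-modification $\bfY$ along $\bfv$ with $\Spa(\bfY)\cap\Val(\bfX)\subseteq V$, and Theorem~\ref{thm:qmodth} then produces a quasi-modification $\bfZ\to\bfX$ along $\bfv$ factoring through $\bfY$, so $\Spa(\bfZ)\subseteq\Spa(\bfY)$ and $\bfv\in\Spa(\bfZ)\cap\Val(\bfX)\subseteq V$. Taking the union over $\bfv\in V$ expresses $V$ as a union of quasi-modification opens, whence $r_\bfX^{-1}(V)=\bigcup\Spa(\bfZ)$ is open and $r_\bfX$ is continuous. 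The main obstacle is exactly this continuity step: it is the only place where the deep existence results for quasi-modifications (Theorem~\ref{thm:qmodth}, and implicitly Theorem~\ref{topvalth}) are used, reflecting that there are enough quasi-modifications to refine the a priori finer pseudo-modification topology on $\Val(\bfX)$.
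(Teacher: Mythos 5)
Your proposal is correct and follows essentially the same route as the paper: part (i) rests on the embedding of $\calX$ into $\Val(\bfX)$ by trivial adic semivaluations together with the existence of quasi-modifications along $\bfv$ with prescribed small generic space, and part (ii) rests on Lemma~\ref{qmodrem}(ii) plus the fact that quasi-modification opens generate the topology of $\Val(\bfX)$. The only (harmless) cosmetic difference is in (ii), where you cover an arbitrary open by quasi-modification basic opens directly via Theorem~\ref{thm:qmodth}, whereas the paper reduces to quasi-compact opens and quotes Theorem~\ref{topvalth} (itself proved by exactly your pointwise argument).
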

\begin{proof}
We have a natural embedding $j_\bfX\:\calX\into\Val(\bfX)$ that sends $y\in\calX$ to the trivial adic semivaluation $\bfy:=(y\toisom y)\to\bfX$. Since $i_\bfX=\red_\bfX\circ j_\bfX$, it is sufficient to show that for an adic semivaluation $\bfT\to \bfX$ with $\calT=y$, the semivaluation $\bfy$ is its minimal generalization in $j_\bfX(\calX)$. Plainly, $\bfy$ is a generalization of $\bfT$. Let $y'\in\calX$ be a point whose closure does not contain $y$. Choose any open neighborhood $y\in \calU$ not containing $y'$, and set $\bfU:=(\calU\to X)$. Then $\bfU\to\bfX$ is a pseudo-modification along $\bfT$, and by Theorem~\ref{thm:qmodth}, there exists a quasi-modification $\bfZ\to\bfX$ along $\bfT$ that factors through $\bfU$. Obviously, $\Val(\bfZ)$ is a neighborhood of $\bfT$ that does not contain $j_\bfX(y')$. This proves (1).

Let $U\subseteq\Val(\bfX)$ be a subset. If $U$ is open and quasi-compact then, by Theorem~\ref{topvalth}, there exists a quasi-modification $\bfW\to \bfX$ such that $U=\Val(\bfW)$. Thus, $r_\bfX^{-1}(U)=\Spa(\bfW)$ is open in $\Spa(\bfX)$. Since the topology is generated by open quasi-compact subsets, we obtain that $r_\bfX$ is continuous. It remains to note that if $r_\bfX^{-1}(U)$ is open in $\Spa(\bfX)$ then $U=\Val(\bfX)\cap r_\bfX^{-1}(U)$ is open in $\Val(\bfX)$, and so $r_\bfX$ is a topological quotient map.
\end{proof}

\subsection{Miscellaneous}

\subsubsection{Limit of quasi-modifications}
An important part of our work was to study the filtered families of modifications of a model $\bfX$, and of quasi-modifications of $\bfX$ along an adic semivaluation $\bfT\to\bfX$. It is natural to ask if these families possess a limit. If $\bfX$ is schematic then the first family usually does not have a limit in the category of schemes, but admits a limit in the larger category of locally ringed spaces - the Riemann-Zariski space $\gtX=\RZ(\bfX)$. However, the second family possess a scheme limit, which is the spectrum of the semivaluation ring $\calO_{\gtX,\bfT}$ of $\bfT$. For general algebraic space models, it may happen that the limit of the second family does not exist, but one can identify the obstacle for this -- non-existence of the localization $\calX_\calT$. We have the following very explicit description of the limit when the localization exists:

\begin{theor}
Let $\bfX$ be a model with a semivaluation $\bfv\:\bfT\to\bfX$, and let $\{\bfX_\alp\to\bfX\}_\alp$ (resp. $\{\bfX'_\beta\to\bfX\}_\beta$) be the family of all pseudo-modifications (resp. quasi-modifications) along $\bfT$. Assume that the localization $\calX_\calT$ exists. Then,

(1) The Ferrand pushout $Z=\calX_\calT\coprod_\calT T$ and the limit $\underleftarrow{\lim}_\alp X_\alp$ exist, and the natural map $Z\to\underleftarrow{\lim}_\alp X_\alp$ is an isomorphism.

(2) Assume, in addition, that $\bfv$ is adic. Then $Z\toisom\underleftarrow{\lim}_\beta X'_\beta$. Moreover, if $\bfX$ is good then $Z$ is also isomorphic to the limit of all quasi-blow ups along $\bfT$.
\end{theor}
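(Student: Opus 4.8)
The plan is to identify both $Z$ and the inverse limit with the spectrum of the \emph{semivaluation ring}
$$\calO:=\calO_{\calX,\calT}\times_{k(\calT)}R,$$
where $R=\calO(T)$ is the valuation ring of $\bfT$ and $k(\calT)=\Frac(R)$ is the residue field of $\calX$ at the Zariski point $\calT$. After replacing $\bfX$ by an affine modification (Theorem~\ref{thm:modmainth}), which alters neither $Z$ nor the cofinal structure of the families along $\bfv$ (since modifications are strict and $\bfv$ lifts bijectively by Lemma~\ref{lem1}), I may assume $\bfX$ is affine; then $T=\Spec(R)$ is an affine valuation scheme by \cite[Proposition~4.1.9(i)]{pruf}, while $\calX_\calT=\Spec(\calO_{\calX,\calT})$ is a local affine scheme. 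The closed point $\calT\into\calX_\calT$ and the generic point $\calT\into T$ form an affine Ferrand datum, so $Z$ exists and equals $\Spec(\calO)$, exactly as in the schematic case \cite[\S2.3]{temrz}.

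For the natural map in (i), observe that for every pseudo-modification $\bff_\alp\:\bfX_\alp\to\bfX$ along $\bfv$ the generic component $\gtf_\alp$ is an open immersion through which $\calT$ lifts, whence $\calX_\calT\toisom(\calX_\alp)_\calT\to X_\alp$; combined with the map $T\to X_\alp$ coming from the lift of $\bfv$, and since these agree over $\calT$, the universal property of the pushout produces a compatible system of morphisms $Z\to X_\alp$. To see that the limit exists and that this system is an isomorphism onto it, I would show that affine pseudo-modifications along $\bfv$ are \emph{cofinal}. This is where the hypothesis is used decisively: because $\calX_\calT$ exists, the approximation argument of \cite{temrz} (the simple form alluded to in the comparison preceding Proposition~\ref{prop:affpushout}) applies directly to the honest pushout $Z$ rather than to the neighbourhood pushouts of Theorem~\ref{strictqmodth}. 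Concretely, writing $\calO=\underrightarrow{\colim}_i A_i$ as a filtered colimit of finitely generated $\calO(X)$-subalgebras and setting $W_i:=\Spec(A_i)$, for large $i$ an open neighbourhood $\calU_i$ of $\calT$ in $\calX$ maps to $W_i$; the schematic image $Y_i$ of $\calU_i$ gives an affine pseudo-modification $\bfY_i:=(\calU_i\to Y_i)\to\bfX$ along $\bfv$, and a standard limit argument (\cite[Lemma~2.1.11]{pruf}) shows that each $Z\to X_\alp$ factors through some $\bfY_i$, so the $\bfY_i$ are cofinal in the family, which is filtered by Lemma~\ref{alonglem}. Since the transition maps are affine and $\underrightarrow{\colim}_i\calO(Y_i)\toisom\calO$, this yields $\underleftarrow{\lim}_\alp X_\alp=\underleftarrow{\lim}_i Y_i=\Spec(\calO)=Z$, proving (i).

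For (ii), assume $\bfv$ is adic, i.e. $\bfv\in\Val(\bfX)$. Then Theorem~\ref{thm:qmodth} shows that every pseudo-modification along $\bfv$ is refined by a quasi-modification along $\bfv$ factoring through it, so the quasi-modifications along $\bfv$ are cofinal among all pseudo-modifications along $\bfv$; as both families are filtered, their limits coincide and $\underleftarrow{\lim}_\beta X'_\beta\toisom\underleftarrow{\lim}_\alp X_\alp\toisom Z$. If moreover $\bfX$ is good, then Theorem~\ref{thm:qmodth}(ii) refines every quasi-modification along $\bfv$ by a quasi-blow up along $\bfv$, so the quasi-blow ups are in turn cofinal and $Z$ is also their limit.

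The main obstacle is the cofinality statement in (i) together with the identification $\underrightarrow{\colim}_i\calO(Y_i)\toisom\calO$: one must check that the approximating affine pseudo-modifications genuinely recover the semivaluation ring and that the schematic-image and neighbourhood constructions stay within the family along $\bfv$ with affine, schematically dominant transition maps. This is precisely the point at which the existence of $\calX_\calT$ removes the technical overhead of Theorem~\ref{strictqmodth}, collapsing the chain of affine and Ferrand pushouts used there into the single affine Ferrand pushout defining $Z$.
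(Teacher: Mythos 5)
Your overall strategy is the same as the paper's: produce the map $Z\to\underleftarrow{\lim}_\alp X_\alp$ from the universal property of the pushout, build a cofinal family of affine pseudo-modifications along $\bfv$ by approximating $Z$ over $X$ and taking schematic images of small neighborhoods of $\calT$, and deduce (ii) from (i) by the cofinality of quasi-modifications (resp.\ quasi-blow ups) supplied by Theorem~\ref{thm:qmodth}. That part of the plan, including the treatment of (ii), is sound.

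The genuine gap is the opening reduction. Replacing $\bfX$ by an affine modification (Theorem~\ref{thm:modmainth}) only makes $\psi_\bfX$ an affine \emph{morphism}; in this paper ``affine model'' does not mean that $X$ and $\calX$ are affine schemes, and no modification can turn a general qcqs algebraic space $X$ into a scheme (Theorem~\ref{schemeth} gives the precise, nontrivial, criterion for when even that is possible). Consequently \cite[Proposition~4.1.9(i)]{pruf} only yields that $T\to X$ is affine, not that $T=\Spec(R)$ is an affine scheme, and the objects on which your whole argument rests --- the semivaluation ring $\calO=\calO_{\calX,\calT}\times_{k(\calT)}R$, the ring $\calO(X)$, and the colimit of finitely generated $\calO(X)$-subalgebras $A_i$ --- are simply not defined. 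Even the existence of $Z$ in (i) is not established this way. The correct substitutes, which is what the paper uses, are: \cite[Corollary~4.6.2]{push} for existence of the Ferrand pushout of algebraic spaces, \cite[Corollary~4.6.10(ii)]{push} for $X$-separatedness of $Z$, Rydh's approximation \cite[Theorem D]{rydapr} to write $Z$ as a filtered limit of finitely presented $X$-separated $X$-spaces $Z_\delta$ with affine transitions, and \cite[Corollary~2.4.17]{pruf} to factor $\calX_\calT\to Z_\delta$ through a neighborhood $W_\delta$ of $\calT$ in $\calX$; the pseudo-modifications are then the schematic images of neighborhoods $\calU_\gamma\subseteq W_\delta$ in $Z_\delta$. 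A secondary soft spot is your final cofinality step: since $X_\alp$ is only of finite type (not finitely presented) over $X$, you cannot directly factor $Z\to X_\alp$ through some $Y_i$ by approximation; you must first embed $X_\alp$ as a closed subspace of a finitely presented $X$-space $Y$, factor $Z\to Y$, shrink so that $\calU_\gamma\subseteq\calX_\alp$, and use schematic dominance of $\calU_\gamma\to U_\gamma$ to force the factorization through the closed subspace $X_\alp\subseteq Y$.
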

\begin{proof}
First, the Ferrand pushout exists by \cite[Theorem~6.2.1(ii)]{push}. Furthermore, for any $\alp$ the morphisms $T\to X_\alp$ and $\calX_\calT\to \calX_\alp\to X_\alp$ induce a morphism $Z\to X_\alp$, so a natural map of functors $Z\to\underleftarrow{\lim}_\alp X_\alp$ arises. (At this stage we do not know that the limit is representable.)

Next we claim that (2) follows from (1). Indeed, if $\bfv$ is adic then quasi-modifica-tions along $\bfT$ are cofinal among all pseudo-modifications along $\bfT$ by Theorem~\ref{thm:qmodth}. Moreover, if $\bfX$ is good then already quasi-blow ups along $\bfT$ form a cofinal subfamily by Theorem~\ref{thm:qmodth}(ii).

To prove (1) we will first construct a filtered family of pseudo-modifications $\{\bfU_\gamma\to\bfX\}_\gamma$ along $\bfT$ such that the transition morphisms $U_{\gamma'}\to U_\gamma$ are affine, $Z=\underleftarrow{\lim}_\gamma U_\gamma$ and $\calX_\calT=\cap_\gamma\calU_\gamma$. Note that $Z$ is $X$-separated by \cite[Theorem~6.4.1]{push}. By approximation (see \cite[Theorem D]{rydapr}), $Z$ is isomorphic to the filtered limit of finitely presented $X$-separated $X$-spaces $Z_\delta$ with affine transition morphisms. By \cite[Theorem~A.2.1]{pruf}, each morphism $\calX_\calT\to Z_\delta$ factors through a sufficiently small neighborhood $W_\delta$ of $\calT$ in $\calX$. Define now our family of pseudo-modifications as follows: let $\calU_\gamma$ be an open neighborhood of $\calT$ in some $W_\delta$ and let $U_\gamma$ be the schematic image of $\calU_\gamma$ under the morphism $W_\delta\to Z_\delta$. It is easy to see that this family satisfies all required properties.

Finally, we claim that the subfamily $\{\bfU_\gamma\}$ of $\{\bfX_\alp\}$ is cofinal; once we prove this the theorem follows. Fix any $\alp$. By approximation, $X_\alp$ embeds as a closed subspace into an $X$-space $Y$ of finite presentation. Consider the morphism $Z\to Y$ induced by $Z\to X_\alp$. By approximation, it factors through some $U_\gamma$. Furthermore, enlarging $\gamma$ we can also achieve that $\calU_\gamma\subseteq\calX_\alp$. Since the morphism $\calU_\gamma\to U_\gamma$ is schematically dominant, and the composition $\calU_\gamma\to U_\gamma\to Y$ factors through the closed subspace $X_\alp\subseteq Y$ via the morphism $\calU_\gamma\to\calX_\alp\to X_\alp$, the morphism $U_\gamma\to Y$ also factors through $X_\alp$. Thus, the pseudo-modification $\bfU_\gamma\to\bfX$ factors through $\bfX_\alp\to\bfX$, and we are done.
\end{proof}

\subsubsection{Schematization}\label{schemeprsec}
Semivaluation pushouts as above are also useful in proving the following valuative criterion for schematization of algebraic spaces.

\begin{theor}\label{schemeth}
Let $X$ be a qcqs algebraic space with a schematically dense quasi-compact open {\em subscheme} $\calX$, and let $\bfX$ denote the corresponding good model. Then the following conditions are equivalent:

(i) There exists an $\calX$-admissible blow up $X'\to X$ such that $X'$ is a scheme.

(ii) For any semivaluation $\bfT\to\bfX$ the valuation space $T$ is separated.

(iii) For any adic semivaluation $\bfT\to\bfX$ the valuation space $T$ is separated.
\end{theor}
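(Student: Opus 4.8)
The plan is to establish the cycle (i)$\Rightarrow$(ii)$\Rightarrow$(iii)$\Rightarrow$(i), with the last implication carrying essentially all the content.

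For (i)$\Rightarrow$(ii), fix an $\calX$-admissible blow up $X'\to X$ with $X'$ a scheme, and let $\bfX'=(\calX\into X')$ be the associated good model; since the generic component is the identity and $X'\to X$ is proper, $\bfX'\to\bfX$ is a modification. Given a semivaluation $\bfT\to\bfX$, the valuative criterion of properness (equivalently the bijection $\Spa(\bff)$ of Lemma~\ref{lem1}(4)) lifts the structure map $T\to X$ to $T\to X'$. As $T\to X$ is separated and factors through $X'\to X$, cancellation shows $T\to X'$ is separated. Because a valuation space has a unique closed point, the image of $T$ in the scheme $X'$ lies in an affine open $\Spec(A)$, and $T\to X'$ factors through it; hence $T\to\Spec(A)$ is separated, and composing with $\Spec(A)\to\Spec(\bbZ)$ we conclude that $T$ is separated. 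The implication (ii)$\Rightarrow$(iii) is immediate, since adic semivaluations are a subclass of all semivaluations.

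The substance lies in (iii)$\Rightarrow$(i). I would first reduce to the case that $\bfX$ is of finite type: writing $X=\underleftarrow{\lim}_\lambda X_\lambda$ as a filtered limit of finite type $\bbZ$-spaces with affine transition maps (\cite[Theorem D]{rydapr}), the open subscheme $\calX$ descends to some $\calX_\lambda\subseteq X_\lambda$, giving a finite type good model $\bfX_\lambda$; and an $\calX_\lambda$-admissible blow up $X'_\lambda\to X_\lambda$ with $X'_\lambda$ a scheme pulls back to $X'=X'_\lambda\times_{X_\lambda}X$, which is affine over the scheme $X'_\lambda$ (as $X\to X_\lambda$ is affine), hence a scheme, with $X'\to X$ an $\calX$-admissible blow up. It therefore suffices to schematize $\bfX_\lambda$. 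The core is then the local statement: \emph{along every adic semivaluation $\bfT\to\bfX$ with $T$ separated there is a strict good affine quasi-modification $\bfW\to\bfX$ whose model space $W$ is a scheme.} To prove it I would rerun the construction of Theorem~\ref{strictqmodth}, observing that its inputs are now all schemes: $\calX$ is a scheme by hypothesis, hence so are the closure $Z\subseteq\calX$ of $\eta=\calT$ and any affine chart $Z_1\into Z$; and $T$, being a separated valuation space, is an affine scheme $\Spec(R)$ by \cite[Proposition~4.1.9(i)]{pruf}. Since the $X$-affine pushout $U_1=Z_1\coprod_\calT^{X^\aff}T$, the Zariski gluing $U=Z\cup U_1$, the Ferrand pushout $V=U\coprod_Z\calX$ (\cite[\S4]{push}), the finite type approximation of $V\to X$, and the final schematic image all preserve the property of being a scheme, the quasi-modification $\bfW$ produced by the construction is schematic.

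Granting the local statement, the global argument is short. By Proposition~\ref{compprop} the space $\Val(\bfX)$ is quasi-compact, so condition (iii) yields finitely many strict good affine quasi-modifications $\bfW_i\to\bfX$ with $\bfW_i$ schematic and with $\Val(\bfW_i)$ covering $\Val(\bfX)$. Gluing them after blow ups via Lemma~\ref{lem:glqmod} produces a strict quasi-modification $\bfX'=\cup_i\bfW'_i\to\bfX$ with each $\bfW'_i\to\bfW_i$ a blow up; since $\Val(\bfW'_i)=\Val(\bfW_i)$ these still cover, so $\Val(\bfX')=\Val(\bfX)$ and, as in the proof of Theorem~\ref{th:decompose1fintype}, $\bfX'\to\bfX$ is a modification by Lemma~\ref{lem1}, and a blow up since $\bfX$ is good. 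Each $W'_i$ is a blow up of the scheme $W_i$ along a finite type ideal, hence a scheme, and the $W'_i$ are open subspaces covering $X'$; therefore $X'$ is a scheme, and $X'\to X$ is the desired $\calX$-admissible blow up. The main obstacle is the local statement: one must verify that separatedness of $T$ genuinely propagates through the entire chain of pushouts of Theorem~\ref{strictqmodth} so that $W$ emerges as a scheme — in particular that the affine and Ferrand pushouts of the relevant schemes remain schematic in this relative setting, where $X$ need not be affine. A second delicate point is the finite type reduction, in which the hypothesis (iii) and the adicity of semivaluations must be transported along the non-adic approximation morphisms $\bfX\to\bfX_\lambda$.
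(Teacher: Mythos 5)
Your cycle (i)$\Rightarrow$(ii)$\Rightarrow$(iii)$\Rightarrow$(i) and your global skeleton for (iii)$\Rightarrow$(i) (local schematic quasi-modifications, quasi-compactness of $\Val(\bfX)$, gluing via blow ups) match the paper, and your direct argument for (i)$\Rightarrow$(ii) is a correct substitute for the paper's citation. But the local statement --- the heart of the matter --- is not established, and the obstacle you yourself flag is a genuine gap, not a routine verification. Rerunning Theorem~\ref{strictqmodth} does \emph{not} obviously produce a schematic $W$: the $X$-affine pushout $U_1=Z_1\coprod_\calT^{X^\aff}T$ is by construction only an algebraic space affine over the algebraic space $X$, and the Ferrand pushout $V=U\coprod_Z\calX$ is exactly the ``pincement'' situation in which a pushout of schemes is in general only an algebraic space (one needs suitable affine neighborhoods of the glued locus, which is precisely what is in question here). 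So ``all inputs are schemes, hence the output is a scheme'' does not go through for either pushout. The paper sidesteps this entirely: because $\calX$ is a \emph{scheme}, the localization $\calX_\calT$ exists, and the \emph{absolute} affine pushout $W=\calX_\calT\coprod^\aff_\calT T$ is an affine scheme by construction; by \cite[Theorem~4.4.12]{push} it agrees with the Ferrand pushout, giving $W\to X$, which is approximated by a finitely presented affine $W_0\to X$; the schematic image $Y$ of a suitable $\calY=V\cap\calX$ in $W_0$ is then an affine scheme, $\bfY\to\bfX$ is a pseudo-modification along $\bfv$, and Theorem~\ref{thm:qmodth} --- which carries no finite type hypothesis --- yields a quasi-blow up $\bfZ\to\bfX$ through $\bfY$ with $Z$ a scheme because $Z\to Y$ is quasi-projective.

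Your finite type reduction is also problematic, and it is rendered unnecessary by the route above. First, even granting that $\calX$ descends to an open $\calX_\lambda\subseteq X_\lambda$, you need $\calX_\lambda$ to be a scheme for your rerun of Theorem~\ref{strictqmodth} (the closure $Z$ of $\eta$ must be a scheme), and this is not addressed. Second, and more seriously, hypothesis (iii) concerns semivaluations of $\bfX$, whereas your covering argument needs separatedness of $T$ for adic semivaluations of $\bfX_\lambda$; since $\calX\to\calX_\lambda$ is only schematically dominant, not surjective, there may be adic semivaluations of $\bfX_\lambda$ whose generic point does not lift to $\calX$, and for these you have no information. You name this as a ``delicate point'' but do not resolve it. Finally, a smaller issue: your last step asserts that the glued modification $\bfX'\to\bfX$ is ``a blow up since $\bfX$ is good,'' which does not follow from Lemma~\ref{lem:glqmod}; the paper instead applies Theorem~\ref{thm:modmainth} to the quasi-modifications $\bfZ_i$ to obtain blow ups $\bfX'\to\bfX$ and $\bfZ'_i\to\bfZ_i$ with $\bfX'=\cup_i\bfZ'_i$ directly.
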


\begin{proof}
To prove (i)$\Rightarrow$(ii), let $X'\to X$ be an $\calX$-admissible blow up such that $X'$ is a scheme, and $\bfT\to\bfX$ a semivaluation. Since $X'\to X$ is proper, we can lift the map $T\to X$ to $T\to X'$, which is necessarily separated since so is $T\to X$. Thus, $T$ is an affine scheme by \cite[Proposition~5.1.9(ii)]{pruf}.

The implication (ii)$\Rightarrow$(iii) is obvious, so assume that (iii) is satisfied and let us construct $X'\to X$ as in (i).

First, we claim that any adic semivaluation $\bfv\:\bfT\to\bfX$ factors through a quasi-modification $\bfZ\to\bfX$ such that $Z$ is a scheme. Recall that $T$ is affine by \cite[Proposition~5.1.9(ii)]{pruf}. Consider the localization $\calX_\calT:=\Spec(\calO_{\calX,\calT})$ and let $W:=\calX_\calT\coprod^\aff_\calT T$ be the affine pushout. By \cite[Theorem~4.2.1]{push}, it is also the Ferrand pushout, and in particular, there exists a natural morphism $W\to X$. By approximation, \cite[Theorem D (iii)]{rydapr}, we can factor it through a finitely presented morphism $W_0\to X$ such that $W_0$ is affine. The morphism $\calX_\calT\to W_0$ factors through an open neighborhood $V$ of $\calT$ in $X$ by \cite[Theorem~A.2.1]{pruf}. Set $\calY:=V\times_X\calX=V\cap\calX$, and let $Y$ be the schematic image of $\calY\to W_0$. Then $Y$ is an affine scheme and $\bfY\to\bfX$ is a pseudo-modification along $\bfv$. So, by Theorem~\ref{thm:qmodth}, there exists a quasi-blow up $\bfZ\to\bfX$ that factors through $\bfY$. Furthermore, $Z\to Y$ is quasi-projective since $Z\to X$ is so, and hence $Z$ is a scheme.

Since $\Val(\bfX)$ is quasi-compact, we can find quasi-blow ups $\bfZ_1\.\bfZ_n$ of $\bfX$ such that each $Z_i$ is a scheme and $\Val(\bfX)=\cup_{i=1}^n\Val(\bfZ_i)$. By Theorem~\ref{thm:modmainth}, there exist blow ups $\bfX'\to\bfX$ and $\bfZ'_i\to\bfZ_i$ with open immersions $\bfZ'_i\into\bfX'$, and by Lemma~\ref{lem1}, one has that $\bfX'=\cup_i\bfZ'_i$. Thus, $X'\to X$ is an $\calX$-admissible blow up, and $X'$ is a scheme because it is covered by blow ups of $Z_i$'s.
\end{proof}

\subsubsection{Pr\"ufer models}
A model $\bfX$ is called {\em Pr\"ufer} if it does not admit non-trivial modifications.

\begin{theor}\label{prufmorth}
Let $\bfX$ be a model of algebraic spaces. The following conditions are equivalent:

(i) $\psi_\bfX$ is a pro-open immersion and $(X,\calX)$ is a Pr\"ufer pair.

(ii) Any quasi-modification $\bfX'\to \bfX$ is an open immersion.

(iii) $\bfX$ is Pr\"ufer.
\end{theor}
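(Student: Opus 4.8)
The plan is to establish $(2)\Leftrightarrow(3)$ directly from the factorization theorem, and then to match these with the structural condition $(1)$ through the homeomorphism $\red_\bfX\colon\Val(\bfX)\toisom\RZ(\bfX)$ and the theory of Pr\"ufer pairs of \cite{pruf}. For $(3)\Rightarrow(2)$, let $\bff\colon\bfX'\to\bfX$ be a quasi-modification. Applying Theorem~\ref{thm:modmainth} with $n=1$ produces affine modifications $\bfp\colon\bfW\to\bfX$ and $\bfq\colon\bfX'_0\to\bfX'$ together with an open submodel $\bfX'_0\subseteq\bfW$ for which $\bff\circ\bfq\colon\bfX'_0\to\bfX$ is the submodel inclusion composed with $\bfp$. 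Since $\bfX$ is Pr\"ufer, $\bfp$ is an isomorphism, so $\bfX'_0\subseteq\bfX$ is an open submodel and $\bff\circ\bfq$ is the corresponding open immersion. On model spaces $f\circ q$ is then a monomorphism, whence so is the proper morphism $q\colon X'_0\to X'$; a proper monomorphism being a closed immersion, $q$ is a closed immersion. As $\bfq$ is a strict modification we have $\calX'_0=\calX'$ and $\psi_{\bfX'}=q\circ\psi_{\bfX'_0}$ factors through $q$; by schematic dominance of $\psi_{\bfX'}$ the closed immersion $q$ is surjective, hence an isomorphism. Therefore $\bff\cong(\bfX'_0\subseteq\bfX)$ is an open immersion.

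For $(2)\Rightarrow(3)$, let $\bff\colon\bfX'\to\bfX$ be a modification; by Proposition~\ref{prop:strpmISqm} it is a quasi-modification, so $(2)$ makes it an open immersion. Then $f\colon X'\to X$ is simultaneously proper and an open immersion, hence a clopen immersion. Since $\psi_\bfX$ is schematically dominant and factors through $f$, the schematic image $X$ of $\calX$ is contained in the closed subspace $X'$, forcing $X'=X$. Thus $\bff$ is an isomorphism, and $\bfX$ admits no non-trivial modification.

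It remains to connect $(1)$. For $(1)\Rightarrow(3)$: when $(X,\calX)$ is a Pr\"ufer pair with $\psi_\bfX$ pro-open, the structure theory of \cite{pruf} makes every $\calX$-admissible blow up of $X$ trivial, this being the valuation-theoretic analogue of the invertibility of finitely generated ideals over a Pr\"ufer ring. Since blow ups are cofinal among modifications of a good model (Theorem~\ref{thm:modmainth}), and the pro-open case reduces to the good, finite type case by the approximation of \S\ref{apprsec}, the model $\bfX$ has no non-trivial modification. For $(3)\Rightarrow(1)$ one runs this backwards through the birational spaces: by Theorem~\ref{rzth}, $\red_\bfX\colon\Val(\bfX)\toisom\RZ(\bfX)$, and since the only modification of a Pr\"ufer $\bfX$ is $\bfX$ itself we have $\RZ(\bfX)=|X|$; hence $\Val(\bfX)\toisom|X|$, and by Corollary~\ref{rzcor}(i) the map $\psi_\bfX$ is injective on points with a unique minimal generalization in $\calX$ over each point of $X$, which is the topological signature of a pro-open immersion. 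Upgrading this to a scheme-theoretic pro-open immersion and reading off from the adic semivaluations parametrised by $\Val(\bfX)=|X|$ that each local ring $\calO_{X,x}$ is valuative over the corresponding residue field of $\calX$ identifies $(X,\calX)$ as a Pr\"ufer pair.

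I expect the decisive obstacle to be exactly the implication $(3)\Rightarrow(1)$: passing from the purely modification-theoretic hypothesis to the \emph{scheme-theoretic} pro-open immersion together with the pointwise valuation-ring condition requires the refined valuative criteria and the local structure results of \cite{pruf}, rather than only the formal properties of $\Val$ and $\RZ$ that suffice for $(2)\Leftrightarrow(3)$.
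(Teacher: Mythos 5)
Your treatment of $(2)\Leftrightarrow(3)$ is correct and is essentially the paper's: both directions are read off from Theorem~\ref{thm:modmainth} (you spell out the monomorphism/schematic-dominance bookkeeping that the paper leaves implicit, which is fine). The genuine gap is in $(3)\Rightarrow(1)$, and you have in fact diagnosed its location yourself without closing it. The route through $\red_\bfX\:\Val(\bfX)\toisom\RZ(\bfX)=|X|$ and Corollary~\ref{rzcor}(i) only produces \emph{topological} information -- injectivity of $\calX\to|X|$ and the unique-minimal-generalization property -- and Corollary~\ref{rzcor}(i) holds for \emph{every} model, so this cannot by itself single out pro-open immersions. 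The step ``upgrading this to a scheme-theoretic pro-open immersion'' is precisely the content of the implication and is asserted rather than proved; likewise ``reading off \ldots that each local ring $\calO_{X,x}$ is valuative'' is not justified.

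The paper closes this gap by a short approximation argument that your proposal misses entirely: granting (2), Theorem~\ref{thm:modmainth} shows $\psi_\bfX$ is affine, so by \cite[Lemma~2.1.10]{pruf} one writes $\calX=\underleftarrow{\lim}_\alp X_\alp$ with $X_\alp\to X$ of finite type and affine schematically dominant transition morphisms; each $\bfX_\alp:=(\calX\to X_\alp)$ is then a \emph{strict quasi-modification} of $\bfX$, hence by (2) an open immersion, so $\psi_\bfX$ is a filtered limit of open immersions $X_\alp\into X$ and therefore pro-open. The Pr\"ufer-pair property is then imported wholesale from \cite[Theorem~3.1.8]{pruf}, not re-derived from semivaluations. (The paper treats $(1)\Rightarrow(3)$ as obvious from the definitions in \cite{pruf}; your sketch of it via cofinality of blow ups is also shakier than needed, since Theorem~\ref{thm:modmainth} gives cofinality of blow ups only for \emph{good} models, whereas here $\psi_\bfX$ is merely pro-open.)
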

\begin{proof}
The implication (i)$\Rightarrow$(ii) is obvious, and the equivalence (ii)$\Leftrightarrow$(iii) follows from Theorem~\ref{thm:modmainth}. Let us prove the implication (ii)$\Rightarrow$(i). By Theorem~\ref{thm:modmainth}, $\psi_\bfX$ is affine. Thus, by \cite[Lemma~2.1.10]{pruf}, $\calX$ is the filtered limit of $X$-spaces $X_\alp$ of finite type such that the transition morphisms are affine and schematically dominant. Then $\bfX_\alpha:=(\calX\to X_\alpha)$ is a strict quasi-modification of $\bfX$. By (ii), $\bfX_\alpha\to \bfX$ is an open immersion. Thus, $\psi_\bfX\:\calX\to X$ is the filtered limit of open immersions $X_\alp\into X$, and hence a pro-open immersion. Condition (i) now follows from \cite[Theorem~4.1.11]{pruf}.
\end{proof}

\begin{cor}\label{cor:prufcor}
(1) The class of Pr\"ufer morphisms between qcqs algebraic spaces is stable under \'etale base changes.

(2) A separated morphism $Y\to X$ between qcqs algebraic spaces is Pr\"ufer if and only if for some (and hence any) presentation of $X$ the base change is Pr\"ufer.
\end{cor}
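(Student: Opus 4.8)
The plan is to deduce everything from the intrinsic characterization of Pr\"ufer morphisms in Theorem~\ref{prufmorth}. For a separated morphism $f\colon\calX\to X$ one passes to the associated model by replacing $X$ with the schematic image of $f$; since schematic images commute with flat (in particular \'etale) base change and schematic dominance is preserved by flat base change, this reduction is compatible with all the base changes occurring below, and we may assume $f=\psi_\bfX$ for a model $\bfX$. By Theorem~\ref{prufmorth}, $f$ is Pr\"ufer if and only if it satisfies condition~(1): $\psi_\bfX$ is a pro-open immersion and $(X,\calX)$ is a Pr\"ufer pair. The whole corollary then reduces to the single assertion that each of these two conditions is \'etale-local on the base $X$, i.e.\ is stable both under \'etale base change and under \'etale descent.

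For the pro-open immersion condition this is straightforward: as in the proof of Theorem~\ref{prufmorth}, $\psi_\bfX$ is a pro-open immersion exactly when it is a filtered limit, with affine schematically dominant transition maps, of open immersions $X_\alp\into X$. Open immersions are fppf-local on the base, and forming such a filtered limit commutes with flat base change and descends along \'etale covers; hence ``pro-open immersion'' both ascends and descends along the \'etale topology on $X$. For the Pr\"ufer-pair condition I would invoke the theory of Pr\"ufer pairs of \cite{pruf} (the same source used in the proof of Theorem~\ref{prufmorth} via \cite[Theorem~3.1.8]{pruf}), in which Pr\"ufer pairs are characterized affine-locally; being of this affine/integral-closure type, the property is insensitive to replacing $X$ by an \'etale neighbourhood and descends along \'etale covers.

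Granting this, part~(i) is the base-change half: if $\bfX$ is Pr\"ufer and $X'\to X$ is \'etale, then $\psi_{\bfX'}=\psi_\bfX\times_XX'$ is again a pro-open immersion and $(X',\calX\times_XX')$ is again a Pr\"ufer pair, so $\bfX\times_XX'$ is Pr\"ufer by Theorem~\ref{prufmorth}. For part~(ii), a presentation $\widetilde X\to X$ is a surjective \'etale morphism with $\widetilde X$ a scheme; the forward implication (and the ``hence any'') is immediate from~(i) applied to $\widetilde X\to X$, while the converse is the descent half: if the base change to some presentation is Pr\"ufer, then $\psi_{\widetilde\bfX}$ is a pro-open immersion and its pair is a Pr\"ufer pair, and descending these two properties along the surjective \'etale map $\widetilde X\to X$ shows that $\bfX$ is Pr\"ufer. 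The main obstacle is precisely the \'etale-locality of the Pr\"ufer-pair condition; once that input from \cite{pruf} is secured, the remainder is formal, the pro-open immersion part being routine.
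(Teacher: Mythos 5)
Your proposal is correct and follows essentially the same route as the paper: Theorem~\ref{prufmorth} reduces both parts to the analogous \'etale-locality statement for Pr\"ufer pairs, which is exactly what the paper cites from \cite[Corollary~3.1.14]{pruf}. Your additional separate treatment of the pro-open immersion condition is a minor elaboration of the same reduction, not a different argument.
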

\begin{proof}
Theorem~\ref{prufmorth} reduces the Corollary to analogues statement about Pr\"ufer pairs, that was proved in \cite[Corollary~4.1.17]{pruf}.
\end{proof}

\bibliographystyle{amsalpha}
\bibliography{nagata}

\end{document}